\crefname{figure}{Figure}{Figures}
\crefname{prop}{Proposition}{Propositions}
\theoremstyle{definition}
\newtheorem{thm}{Theorem}[section]   
\newtheorem{cor}[thm]{Corollary}     
\newtheorem{lem}[thm]{Lemma}         
\newtheorem{prop}[thm]{Proposition}  
\newtheorem{obs}[thm]{Observation}  
\theoremstyle{definition} 
\newtheorem{defn}[thm]{Definition}   
\newtheorem*{conj*}{Conjecture}        
\newtheorem{ex}[thm]{Example}        
\newtheorem{rmk}[thm]{Remark}
\newtheorem{question}[thm]{Question}
\DeclareMathOperator{\code}{code}
\DeclareMathOperator{\link}{link}
\DeclareMathOperator{\id}{id}
\DeclareMathOperator{\tk}{Tk}
\DeclareMathOperator{\conv}{conv}
\DeclareMathOperator{\aff}{aff}
\DeclareMathOperator{\Code}{\mathbf{Code}}
\DeclareMathOperator{\OM}{\mathbf{OM}}
\DeclareMathOperator{\OMatroid}{\mathbf{OM}}
\DeclareMathOperator{\NRing}{\mathbf{NRing}}
\DeclareMathOperator{\Ring}{\mathbf{Ring}}
\DeclareMathOperator{\OMRing}{\mathbf{OMRing}}
\newcommand{\lop}{\circ}
\def\D{\mathbb D}
\def\F{\mathbb F}
\def\R{\mathbb R}
\def\cC{\mathcal C}
\def\cF{\mathcal F}
\def\cH{\mathcal H}
\def\cL{\mathcal L}
\def\cM{\mathcal M}
\def\cP{\mathcal P}
\def\cQ{\mathcal Q}
\def\cT{\mathcal T}
\def\cU{\mathcal U}
\def\cV{\mathcal V}
\def\cW{\mathcal W}
\def\scC{\mathscr C}
\def\scD{\mathscr D}
\def\scE{\mathscr E}
\def\scH{\mathscr H}
\def\scM{\mathscr M}
\def\sfD{\mathsf{D}}
\def\sfL{\mathsf{L}}
\def\sfR{\mathsf{R}}
\def\sfS{\mathsf{S}}
\def\sfW{\mathsf{W}}
\def\a{\alpha}
\def\b{\beta}
\def\D{\Delta}
\def\s{\sigma}
\def\t{\tau}
\def\inv{^{-1}}
\def\sm{\setminus}
\newcommand{\ang}[1]{\left\langle #1\right\rangle}	
\renewcommand{\emptyset}{\varnothing}
\def\lra{\leftrightarrow}
\def\1{\overline{1}}
\def\2{\overline{2}}
\def\3{\overline{3}}
\def\4{\overline{4}}
\def\5{\overline{5}}
\def\bx{\mathbf{x}}
\def\by{\mathbf{y}}
\def\id{\operatorname{id}}
\def\uf{\underline{f}}
\DeclareMathOperator{\sep}{sep}
\newcommand{\pcode}{\mathbf P_{\mathbf {Code}}}
\newtheorem{introtheorem}{Theorem}
\title{Oriented Matroids and Combinatorial Neural Codes}
\date{\today}
\author{Alexander Kunin$^*$} \thanks{$^*$Department of Mathematics, Creighton University, alexkunin@creighton.edu}
\author{Caitlin Lienkaemper$^\dagger$}\thanks{$^\dagger$Department of Mathematics and Statistics, Boston University, clienk@bu.edu} 
\author{Zvi Rosen$^\ddagger$}\thanks{$^\ddagger$Department of Mathematics, Florida Atlantic University, rosenz@fau.edu} 
\begin{document}
\maketitle

\begin{abstract}
A combinatorial neural code $\scC\subseteq 2^{[n]}$ is called convex if it arises as the intersection pattern of convex open subsets of $\R^d$.  
We relate the emerging theory of convex neural codes to
the established theory of oriented matroids, both with respect to geometry and computational complexity and categorically. 
For  geometry and computational complexity, we show that a code has a realization with convex polytopes if and only if it lies below the code of a representable oriented matroid in the partial order of codes introduced by Jeffs.
We show that previously published examples of non-convex codes do not lie below any oriented matroids, and we construct examples of non-convex codes lying below
non-representable oriented matroids. 
By way of this construction, we can apply Mn\"{e}v-Sturmfels universality to show that deciding
whether a combinatorial code is convex is NP-hard.

On the categorical side, we show that the map taking an acyclic oriented matroid to the
code of positive parts of its topes is a faithful functor.
We adapt the oriented matroid ideal introduced by Novik, Postnikov, and Sturmfels into
a functor from the category of oriented matroids to the category of rings;
then, we show that the resulting ring
maps naturally to the neural ring of the matroid's neural code.
\end{abstract}

\setcounter{tocdepth}{1}
\tableofcontents


\section{Introduction}

A combinatorial neural code is a collection $\scC$ of subsets of $[n] := \{1,\dots,n\}$.
Such codes model neural activity, with each codeword $\s\subseteq[n]$ in $\scC$ representing a set of neurons which are simultaneously active.
Our motivating example is the activity of hippocampal place cells, neurons in the brain which encode an animal's physical location \cite{okeefe1978}.
Each neuron $i$ is active when the animal is in a corresponding subset $U_i$ of the animal's environment $X \subseteq \R^d$, called the $i$-th \emph{place field}.
If neural activity is viewed as a function $X \to \{0,1\}^n$, then the set $U_i$ is the support of the $i$-th component of this function, the ``preferred location'' of that neuron.
Neurons acting as indicators for a preferred set of stimuli appear across many sensory modalities, so we will use the slightly broader term \emph{receptive field} to de-emphasize the spatial navigation aspect.

In this simplified model, neurons fire together if and only if their receptive fields overlap,
and thus the code represents the intersection pattern of the receptive fields. 
This information can reveal significant topological and geometric information in experimental data, such as the topology of an animal's environment \cite{curto2008cellgroups} or the intrinsic geometry of more abstract stimulus spaces \cite{giusti2015clique,zhou2018hyperbolic}.
Receptive fields are often observed to be convex, and therefore
we are interested in characterizing \emph{convex} neural codes: codes that arise as the intersection patterns of convex open subsets of some Euclidean space.
For example, \cref{fig:code_ex} illustrates three convex receptive fields and the associated convex code.

\begin{figure}[h!]
	\includegraphics[width = 3 in]{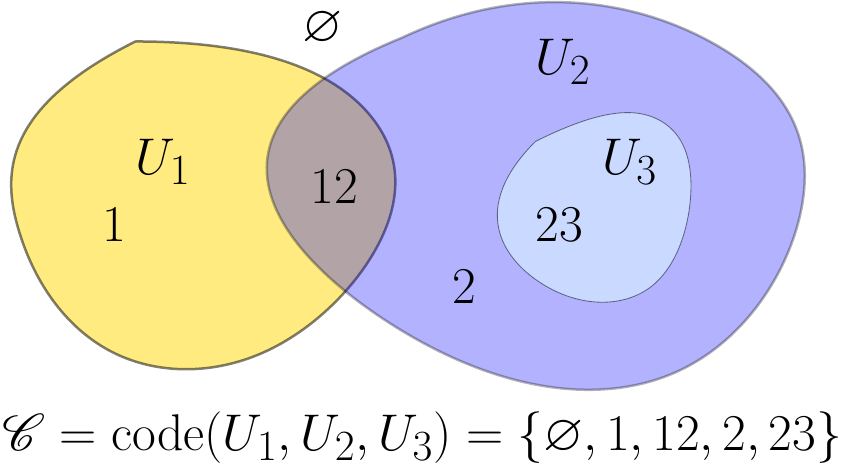}
	\caption{The code of $U_1, U_2, U_3$ is $\scC = \{\emptyset,1,12,2, 23\}$.
	\label{fig:code_ex}} 
\end{figure}

Beyond experimental motivation, requiring receptive fields to be convex yields rich theoretical results.
In particular, the nerve lemma can be used to deduce topological properties of simplicial complexes associated to convex codes \cite{curto2017makes, chen2019neural}. 
Another useful tool developed to study neural codes is the {\em neural ring} \cite{curto2013neural}, the coordinate ring of the code as an algebraic variety in $\F_2^n$.
This was used to detect obstructions to convexity in \cite{curto2019algebraic}.
However, there are many examples of non-convex codes which cannot be captured by these obstructions \cite{lienkaemper2017obstructions, jeffs2019sunflowers}.
While other classes of neural codes have been completely characterized (e.g. codes described by connected receptive fields \cite{mulas2020minimal}, or convex codes on five or fewer neurons \cite{goldrup2019classification}), convex codes have evaded full description.

As the literature on combinatorial neural codes proliferated, we observed various similarities with the well-studied realm of oriented matroid theory.
For instance, the class of stable hyperplane codes introduced in \cite{itskov2018hyperplane} are defined by a collection of half-spaces intersecting a convex set, which are precisely the sets of topes of a realizable COM (conditional oriented matroid) as studied in \cite{bandelt2018coms}.  
The neural ideal, defined in \cite{curto2013neural} and further developed in \cite{curto2019algebraic, gunturkun2017polarization, de2019neural}, seems to align with the oriented matroid ideal defined in \cite{novik2002syzygies}, particularly after the neural ideal is polarized \cite{gunturkun2017polarization}.
Finally, morphisms of codes, as defined in \cite{jeffs2019morphisms}, seem analogous to strong maps of oriented matroids, as formulated in \cite{hochstattler1999linear}. 
In this paper, we draw parallels between the notions of convexity for neural codes and representability for oriented matroids, and formalize these connections on a functorial level.

\begin{figure}[h!]
  \includegraphics[width = 5 in]{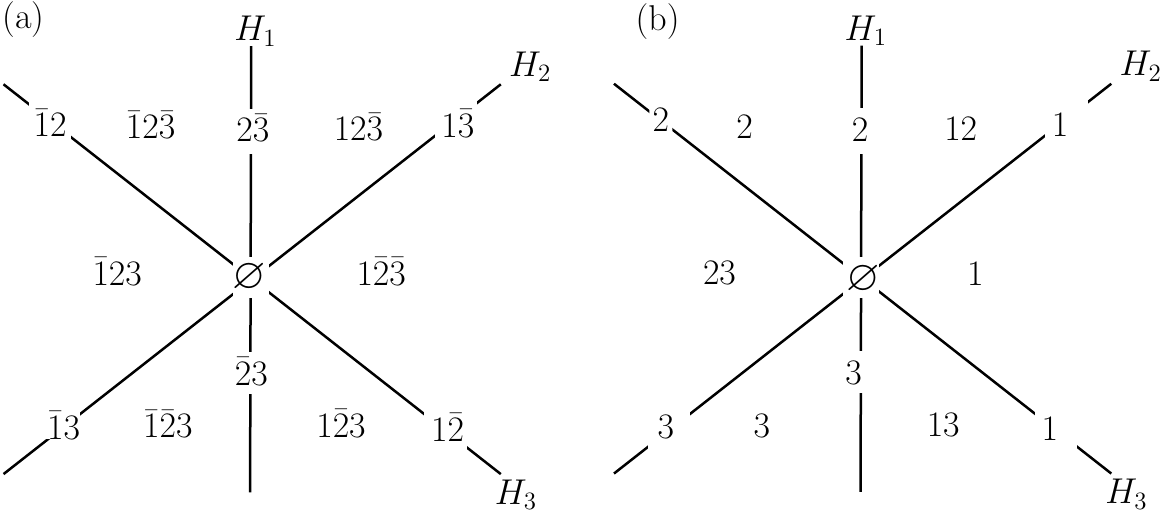}
  \caption{(a)~The covectors of an oriented matroid arising from a central hyperplane arrangement. (b)~The combinatorial code of the cover given by the positive open half-spaces.}
  \label{F:OMandCodeExample}
\end{figure}

First, we establish strong connections between representable oriented matroids and convex neural codes 
by considering the map $\sfL^+: \OM \to \Code$ which takes an oriented matroid to the positive parts of its covectors. Representable oriented matroids are precisely those which can be obtained from real hyperplane arrangements, as in \cref{F:OMandCodeExample}(a).
Isomorphism classes of neural codes form a partially ordered set denoted $\pcode$, introduced in \cite{jeffs2019morphisms}.
Roughly, $\scC\leq \scD$ if there is a way to construct a realization for $\scC$ using a realization of $\scD$, in which case we say $\scC$ is a \emph{minor} of $\scD$. 
%
We extend a result of \cite{jeffs2019morphisms} and use it to prove that
all codes lying below codes of representable oriented matroids
have realizations with interiors of convex polytopes, which we call \emph{open polytope convex}. Further, the converse also holds: {
	\begin{introtheorem}\label{thm:polytope_matroid}
		A code is open polytope convex if and only if it is a minor of $\sfL^+\cM$ for some representable oriented matroid $\cM$.
	\end{introtheorem}
	Since every code which has a convex realization in the plane has a realization with convex polygons \cite{bukh2022planar}, this result gives a full characterization of planar convex codes in terms of representable oriented matroids. In higher dimensions, it is not yet known whether every convex code has a realization with convex polytopes.
	 If this does hold, then \cref{thm:polytope_matroid} would give a full characterization of convex codes in terms of representable oriented matroids. 
Without this result, we can still categorize non-convex codes: if a code is not convex, then either it does not lie below any oriented matroid in $\pcode$, or it lies below non-representable matroids only. 
There are many known examples of non-convex codes \cite{curto2017makes, lienkaemper2017obstructions,jeffs2019morphisms, jeffs2019sunflowers, chen2019neural},
and we show that many of these fall into the first category: they are non-convex because they are not below any oriented matroids in $\pcode$.
For instance, codes with topological local obstructions do not lie below oriented matroids. 
Furthermore, sunflower codes \cite{jeffs2019sunflowers}, a well known family of non-convex codes with no local obstructions also do not lie below oriented matroids. 
	\begin{introtheorem}\label{thm:bad_sunflower}
		The non-convex codes with no local obstructions constructed via sunflowers do not lie below codes of oriented matroids in $\pcode$.
	\end{introtheorem}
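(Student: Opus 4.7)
The plan is to argue by contradiction: assume one of the listed codes $\scC$ satisfies $\scC \leq \sfL^+(\cM)$ for some oriented matroid $\cM$, and derive a contradiction by producing a covector of $\cM$ whose positive part corresponds to a forbidden codeword of $\scC$.

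First, I would recall the uniform shape of the non-convexity obstructions in the cited works: in each case one can exhibit codewords $\sigma_1,\dots,\sigma_k \in \scC$ and a distinguished set $\tau \notin \scC$ such that the intersection pattern of the $\sigma_i$ is inconsistent with the absence of $\tau$. For the sunflower codes of \cite{jeffs2019sunflowers} this is literally a sunflower with core $\tau$ and pairwise-disjoint petals; for \cite{jeffs2019morphisms} and \cite{lienkaemper2017obstructions} the pattern is slightly different but admits the same schematic description once one passes to an appropriate sub-collection of codewords.

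Second, I would prove an oriented matroid sunflower lemma: given covectors $X_1,\dots,X_k$ of $\cM$ whose positive parts realize the prescribed sunflower configuration above $\tau$, iteration of the composition and elimination axioms of $\cL(\cM)$ produces a covector $Z \in \cL(\cM)$ with $Z^+ = \tau$. The critical feature is that this argument uses only the combinatorial axioms, so it covers non-representable $\cM$ as well as representable ones. The eliminations must be scheduled carefully so that each petal element is cancelled against an opposite sign in a partner covector, leaving $\tau$ as the exact positive part of the resulting $Z$.

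Third, I would combine the two ingredients via the morphism $f \colon \sfL^+(\cM) \to \scC$ implicit in $\scC \leq \sfL^+(\cM)$. Because morphisms in $\Code$ are built from trunks and preserve the relevant intersection structure, the sunflower configuration in $\scC$ pulls back to a sunflower of positive parts of covectors in $\cM$. Applying the oriented matroid sunflower lemma produces the covector $Z$ whose positive part then pushes forward to $\tau \in \scC$, contradicting $\tau \notin \scC$.

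The main obstacle is the oriented matroid sunflower lemma. Naive composition of covectors preserves too many signs outside the intended core, so one must delicately apply elimination, and the precise setup will differ slightly between the three examples; in particular, the code from \cite{lienkaemper2017obstructions} does not fit the sunflower template of \cite{jeffs2019sunflowers} verbatim and will likely require a dedicated covector manipulation. A secondary subtlety is verifying that the "trunk" morphisms defining $\leq$ in $\pcode$ interact with sunflower cores in the way claimed — this should follow from the trunk description in \cite{jeffs2019morphisms}, but the bookkeeping on which ground set elements appear in each lifted covector needs to be written out explicitly.
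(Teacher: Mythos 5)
Your overall strategy (assume $\scC \leq \sfL^+(\cM)$, use the trunk description of the morphism to lift the configuration, and force a forbidden codeword) matches the paper's in outline, but the step that carries all the weight --- your ``oriented matroid sunflower lemma'' --- is asserted rather than proved, and there is real reason to doubt it can be obtained by scheduling compositions and eliminations of covectors. The paper does not argue this way: its proof of the claim analogous to your lemma (Claim~1) is not a covector-axiom manipulation but a topological argument. It builds an auxiliary morphism $g$ from $\sfL^+(\cM)$ onto a code $\scD$ on $n+1$ neurons, invokes the fact that any code below an oriented matroid code is a good-cover code and hence has no local obstructions (this rests on the Folkman--Lawrence topological representation theorem, via \cref{prop:no_local} and \cref{thm:good_cover}), and then uses contractibility of $\link_{\{n+1\}}(\Delta(\scD))$ to force $[n+1] \in \scD$. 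The second ingredient (Claim~2) is a tope-graph argument using $T$-convexity of intersections of half-spaces and a shortest-path analysis, again not an elimination computation. Moreover, the contradiction the paper reaches is that the full codeword $P \cup S$ would have to lie in $\scC_n$, not that the sunflower core appears; your plan to eliminate down to a covector with positive part exactly the core $\tau$ is precisely an oriented-matroid analogue of Jeffs' sunflower theorem, which the paper explicitly lists as an \emph{open question} in its final section. So the lemma you defer is, as far as this paper knows, not available, and without it your argument does not go through.

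Two smaller points. First, you do not need a ``dedicated covector manipulation'' for the code of \cite{lienkaemper2017obstructions}: since $\scC_2 \leq \scC$ in $\pcode$, transitivity reduces that case (and the code of \cite{jeffs2019morphisms}) to the statement for the sunflower codes $\scC_n$, which is how the paper handles it. Second, the reduction from $\scC \leq \sfL^+(\cM)$ to $\scC = f(\sfL^+(\cM))$ for a single surjective morphism (using $\emptyset \in \scC_n$), and the bookkeeping identifying each $p_i, s_i$ with a trunk $\tk_{\sfL^+(\cM)}(\pi_i)$, $\tk_{\sfL^+(\cM)}(\sigma_i)$, is exactly the kind of detail you flag but leave out; the paper's Claims 1 and 2 are stated entirely in terms of these $\pi_i, \sigma_i$, and the argument cannot be run without fixing them first.
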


We are also able to generate an infinite family of non-convex codes of the second kind, those which lie below non-representable matroids only.
In order to obtain this family, we establish a relationship between representability and convexity. We do this for the special case of uniform oriented matroids of rank 3, which correspond to non-degenerate pseudoline arrangements in the plane. 
\begin{introtheorem}\label{thm:witness}
\
	 Let $\cM$ be a uniform, rank 3 oriented matroid. Then we can construct a code which is convex if and only if $\cM$ is representable. 
\end{introtheorem}
Using this result, we are able to compare two fundamental decision problems: (1) is a given oriented matroid representable, and (2) is a given neural code realizable by convex sets. 
We demonstrate that deciding convexity for arbitrary neural codes is {\em at least} as hard as deciding representability of an oriented matroid. 
The latter problem is known to be NP-hard and $\exists\R$-hard, leading to the following theorem:

\begin{introtheorem}\label{thm:hard}
	The convex code decision problem is NP-hard and $\exists\R$-hard.
\end{introtheorem}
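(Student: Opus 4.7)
The plan is to reduce the representability problem for uniform oriented matroids to the convex code decision problem, and then to invoke the known hardness of the former. By \cref{thm:witness}, a uniform oriented matroid $\cM$ is representable over $\R$ if and only if $\sfL^+(\cM)$ is a convex neural code. Given $\cM$ presented by its list of covectors, the code $\sfL^+(\cM)$ is obtained by stripping bars in linear time, so any algorithm deciding convexity of codes immediately decides representability of uniform oriented matroids through this polynomial-time many-one reduction.

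To conclude the hardness assertions, I would invoke the Mn\"ev--Sturmfels universality theorem, which shows that the realization space of a uniform oriented matroid can be, up to stable equivalence, an arbitrary primary semi-algebraic set over $\Q$; consequently, deciding representability of a uniform oriented matroid is $\exists\R$-complete, and since $\mathrm{NP}\subseteq\exists\R$, it is also NP-hard. Equivalently, one may cite the earlier results of Mn\"ev and Shor that stretchability of simple pseudoline arrangements, i.e.\ representability of rank-$3$ uniform oriented matroids, is already $\exists\R$-complete. Composing either of these with the reduction $\cM\mapsto\sfL^+(\cM)$ transfers both NP-hardness and $\exists\R$-hardness to the convex code decision problem.

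The main delicate point is matching input encodings, since the cited hardness of uniform OM representability is typically formulated for OMs encoded by chirotopes, whereas the reduction to codes is most transparent when the covectors are listed explicitly. For uniform oriented matroids of bounded rank the number of covectors is polynomial in $n$, so passing from chirotope to covector list is itself a polynomial-time step, and the reduction composes cleanly. More generally, one may simply take the covector-list encoding as the input convention for uniform oriented matroids: this only enlarges the input polynomially relative to the chirotope in the regime where the hardness results apply, and representability remains $\exists\R$-hard under this encoding. Once this bookkeeping is fixed, the remainder of the argument is a direct application of \cref{thm:witness}, and the hardness statements for convex code decision follow.
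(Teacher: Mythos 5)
Your reduction is the same one the paper uses: apply \cref{thm:witness} to transfer the Mn\"ev--Sturmfels/Shor hardness of uniform oriented matroid representability to the convex code decision problem, and handle the input-size bookkeeping by restricting to bounded (in fact rank~$3$) uniform matroids, where the number of covectors is polynomial in the ground set --- this is exactly how the paper's corollary argues NP-hardness measured in the number of codewords.

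One point needs correcting, though, because as written the reduction targets the wrong code. \Cref{thm:witness} is proved for $\sfL^{\pm}(\cM)$, the code on $2n$ neurons whose codewords are the full covectors $X^+\sqcup X^-$ viewed as subsets of $\pm E$ (equivalently, $\cL$ itself as a code on $\pm E$); it is \emph{not} stated for $\sfL^+(\cM)$, and ``stripping bars'' to keep only positive parts discards exactly the information the theorem needs. The forward direction (representable $\Rightarrow$ convex) would survive for $\sfL^+$, but the direction your reduction actually relies on --- convexity of the code forces representability --- is established only for $\sfL^{\pm}$: the proof takes a convex realization in which $U_e$ and $U_{\bar e}$ are disjoint convex sets and applies the hyperplane separation theorem to each such pair, which is impossible if the barred neurons have been deleted. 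So the map in your many-one reduction should be $\cM\mapsto\sfL^{\pm}(\cM)$ (the identity on the covector list, reinterpreted as a code on $2n$ neurons), not $\cM\mapsto\sfL^+(\cM)$. With that substitution the argument is correct and coincides with the paper's proof.
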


Finally, we relate algebraic and categorical structures for matroids and codes. Acyclic oriented matroids form a category  $\OMatroid$ whose morphisms are given by strong maps, as defined in \cite{hochstattler1999linear}. Neural codes form a category  $\Code$ introduced in \cite{jeffs2019morphisms}, with morphisms defined using trunks of codes. 
We show that the map $\sfW^+: \OMatroid \to \Code$ which takes an acyclic oriented matroid to the positive parts of its topes is a faithful functor.
Furthermore, we adapt the oriented matroid ideal introduced in \cite{novik2002syzygies} to non-affine oriented matroids, producing the oriented matroid dual ideal $O(\cM)^\star$ and the oriented matroid ring $k[x_1,\dots,x_n,y_1,\dots,y_n] / O(\cM)^\star$.
We show that the map $\sfS$ taking an oriented matroid to its oriented matroid ring is a functor, and use this to define the category $\OMRing$.
Using results from \cite{gunturkun2017polarization}, we define the depolarization map $\sfD: \OMRing\to \NRing$, and show that this map is functorial. Finally, we show that these maps play nicely with the functor $\sfR: \Code \to \NRing$ from \cite{jeffs2019morphisms}. 

 \begin{introtheorem}\label{thm:foc}
The maps $\sfS$, $\sfD$, and $\sfW^+$ are functorial. In particular, the map $\sfW^+$ is a faithful, but not full functor from $\OMatroid \to \Code$. 
 Moreover, the square below commutes, that is, $\sfR \circ\sfW^+ = \sfD\circ \sfS$. \begin{center}
\begin{tikzcd}
\OMatroid  \arrow[r, "\sfS"] \arrow[d, "\sfW^+"]
& \OMRing \arrow[d, "\sfD"] \\
\Code \arrow[r,  "\sfR"]
& \NRing
\end{tikzcd}
\end{center}
\end{introtheorem}

The paper is organized as follows: In \cref{S:background}, we establish notation and background material that will be necessary for later sections.
In \cref{sec:intersection}, we relate minors of codes to realizability by intersection-closed families of sets, and use this to establish Theorem 2.
In \cref{sec:non_convex}, we discuss classes of non-convex codes and their relationships to oriented matroids. 
In \cref{sec:algebra}, we detail the functors among the categories of acyclic oriented matroids, combinatorial neural codes, and rings. 
Finally, in \cref{S:questions}, we present open questions related to each area discussed in the paper.



\section{Background}\label{S:background}

We provide the essential background information on oriented matroids in \cref{sec:OMbackground} and combinatorial codes in \cref{sec:CCbackground}.
In \cref{sec:codesfromOMs}, we define the maps $\sfW^+$ and $\sfL^+$ which take oriented matroids to combinatorial codes.
This section is by no means comprehensive, and we will occasionally make reference to theorems not fully stated in the text. Our primary reference for oriented matroids is \cite{bjorner1999oriented}, which collects and synthesizes decades of research on the subject. For combinatorial codes, there is no single comprehensive reference; for those unfamiliar with the subject, \cite{curto2013neural} and \cite{jeffs2019morphisms} are good introductions to the neural ring and morphisms of codes, respectively.

\subsection{Oriented matroids}\label{sec:OMbackground}
An oriented matroid $\cM = (E,\cL)$ consists of a finite ground set $E$
and a collection $\cL \subseteq 2^{\pm E}$ of \emph{signed subsets} of $\pm E$ satisfying certain axioms.
Typically, we will take $E = [n] := \{1, \ldots, n\}$, $\bar E = [\bar n] := \{\bar 1, \ldots, \bar n\}$, and $\pm E := E \cup \bar{E}$.
The set $\pm E$ is endowed with the involution $-: \pm E \to \pm E$,
exchanging $e \in E$ with $\bar e \in \bar E$.
The negative of a subset $X \subseteq \pm E$ is $-X := \{-x \mid x\in X\}$.
The \emph{support} of a set $X\subseteq \pm E$ is the set
$\underline X := \{ e\in E \mid e\in X \mbox{ or } -e \in X \} \subseteq E$. 
The \emph{positive part} of $X$ is $X^+ := X \cap E$ and the
\emph{negative part} is $X^- := (-X) \cap E$. 

A set $X\subseteq \pm E$ is a \emph{signed set} if its positive and negative parts are disjoint.
If $e\in E$ and $X$ is a signed subset of $\pm E$, define $X_e$ by $X_e = +$ if $e\in X$, $X_e = -$ if $-e\in X$, and $X_e = 0$ otherwise; in this way, we can consider signed subsets equivalently as subsets of $\pm E$ or as vectors in $\{+,0,-\}^E$.
The \emph{composition} of sign vectors $X$ and $Y$ is defined component-wise by
\begin{align*}
  (X\circ Y)_e := 
  \begin{cases}
    X_e \mbox{ if } X_e\neq 0\\
    Y_e \mbox{ otherwise}.
  \end{cases}
\end{align*}
The \emph{separator} of $X$ and $Y$ is the unsigned set $\sep(X, Y) := \{ e\mid X_e = -Y_e\neq 0\}$. 

Now, we are ready to 	define oriented matroids, which we do via the covector axioms. 	
\begin{defn}
  Let $E$ be a finite set, and $\cL \subseteq 2^{\pm E}$ a collection of signed subsets satisfying the following
  \emph{covector axioms}:
\begin{enumerate}[(V1)]
\item\label{axiom:emptysetV} $\emptyset \in \mathcal L$
\item\label{axiom:symmetryV} $X\in \mathcal L $ implies $-X\in \mathcal L$. 
\item\label{axiom:compositionV} $X, Y\in \mathcal L$ implies $X\circ Y\in \mathcal L$. 
\item\label{axiom:crossingV} If $X, Y\in \mathcal L$ and $e \in \sep(X,Y)$, then there exists $Z\in \mathcal L$ such that $Z_e = 0$ and $Z_f = (X\circ Y)_f = (Y\circ X)_f$ for all $f\notin \sep(X, Y)$. 
\end{enumerate}
Then, the pair $\cM = (E, \cL)$ is called an \emph{oriented matroid}, and $\cL$ its set of covectors.
\end{defn}

Maximal covectors (with respect to inclusion) are called \emph{topes}.
An oriented matroid is \emph{acyclic} if it has a positive tope, i.e.\ a tope with empty negative part.

\begin{ex}
  A central hyperplane arrangement $\cH$ in $\R^d$ produces an oriented matroid.
  Let $\ell_1,\dots,\ell_n$ be linear forms on $\R^d$, and $H_1,\dots,H_n$ their zero sets (i.e.\ hyperplanes).
  We can assign each point $x\in \R^d$ to a signed set $X\subseteq \pm [n]$ by 
  \begin{align*}
    X_i = \begin{cases}
      + \mbox{ if }  \ell_i(x)  > 0\\
      - \mbox{ if }  \ell_i(x)  < 0\\
      0 \mbox{ if }  \ell_i(x)  = 0.
    \end{cases}
  \end{align*}
  The family of signed sets which arise in this way satisfies the covector axioms,
  and therefore defines an oriented matroid.
  Notice that each covector corresponds to a cell of the hyperplane arrangement, and that topes correspond to top-dimensional cells. 
  We will refer to this oriented matroid $\cM(\cH) = ([n], \cL(\cH))$ as the oriented matroid of $\cH$.

  An oriented matroid $\cM$ is \emph{representable} if $\cM = \cM(\cH)$ for some hyperplane arrangement $\cH$. 
		\cref{F:OMandCodeExample}(a) illustrates an example in $\R^2$.
		 The \emph{rank} of a representable oriented matroid is the minimum possible dimension of the hyperplane arrangement $\cH$ with $\cM = \cM(\cH)$.
		Not every oriented matroid is representable. However, we are able to take this hyperplane picture as paradigmatic. The topological representation theorem guarantees that every oriented matroid has a representation by a pseudosphere arrangement \cite{folkman1978oriented}. For details, see \cite[Chapter 5]{bjorner1999oriented}. 
	\end{ex}
	
We can also use oriented matroid theory to describe affine hyperplane arrangements. 
An \emph{affine} oriented matroid consists of a pair $(\cM, g)$ where $\cM$ is an oriented matroid and $g$ is a distinguished element of its ground set. 
The \emph{affine space} of $(\cM, g)$ is the set $\{X \in \cL(\cM) \mid X_g = +\}$. 
We can embed an affine hyperplane arrangement in $\R^d$ as a central hyperplane arrangement in $\R^{d+1}$. 
We replace each affine hyperplane $w_1 x_1 + \cdots + w_dx_d +  b =0$ with a central hyperplane $w_1 x_1 + \cdots + w_dx_d +  bx_{d+1}=0$, and add an additional hyperplane $x_{d+1} = 0$. 
Restricting to the $x_{d+1}=1$ plane recovers the original hyperplane arrangement. 
Each chamber of the affine arrangement corresponds to an element of the affine space of $(\cM(\cH), g)$, where $g$ corresponds to the hyperplane $x_{d+1} = 0$. Because we can translate between affine and central realizations, the matroid $\cM$ is representable with a central hyperplane arrangement if and only if $(\cM, g)$ is representable by an affine hyperplane arrangement for any ground set element $g$. 
	

There are many equivalent axiomatizations of oriented matroids.
The two formulations we use most often throughout this work are the covector axioms (V\ref{axiom:emptysetV})-(V\ref{axiom:crossingV}), stated above, and the circuit axioms (C\ref{axiom:emptysetC})-(C\ref{axiom:weakelimC}), which we state here.

\begin{defn}\label{D:circuitaxioms}
  Let $E$ be a finite set, and $\cC\subseteq 2^{\pm E}$ a collection of signed subsets satisfying the following \emph{circuit axioms}: 
\begin{enumerate}[(C1)]
	\item\label{axiom:emptysetC} $\varnothing \notin \cC$.
	\item\label{axiom:symmetryC} $X \in \cC$ implies $-X \in \cC$.
	\item\label{axiom:incomparableC} $X,Y \in \cC$ and $\underline X \subseteq \underline Y$ implies $X = Y$ or $X = -Y$.
	\item\label{axiom:weakelimC} For all $X,Y \in \cC$ with $X \neq -Y$ and an element $e \in X^+ \cap Y^-$, there is a $Z \in \cC$ such that $Z^+ \subseteq (X^+ \cup Y^+) \setminus e$ and $Z^- \subseteq (X^- \cup Y^-) \setminus e$.
\end{enumerate}
Then the pair $\cM = (E, \cC)$ is an oriented matroid, and $\cC$ is its set of circuits. 

 In some contexts, we admit the sets $\{e, \bar{e}\}$ as \emph{improper circuits}. We will call a circuit a \emph{proper circuit} when we wish to emphasize that it is a signed set, i.e.\ its positive and negative parts are disjoint.
\end{defn}

For an oriented matroid arising from a hyperplane arrangement, circuits are the signs of the coefficients in the minimal linear dependencies among the normal vectors to the hyperplanes.
As a result, the rank of an oriented matroid can be defined via its circuits -- the rank of an oriented matroid is the one less than the cardinality of its largest circuit. 
An oriented matroid is \emph{uniform} of rank $r$ if all of its circuits have the same cardinality of $r+1$. Uniform oriented matroids correspond to generic hyperplane arrangements. 

An element $e \in E$ is a \emph{loop} of $\cM$ if $\{e\} \in \cC(\cM)$. An oriented matroid is \emph{loopless} if no element is a loop.

Proper circuits are related to covectors as follows: 
Two signed sets $X$ and $Y$ are called \emph{orthogonal} if either $\underline X \cap \underline Y = \emptyset $ or if there exist $e, f\in \underline X \cap \underline Y $ such that $X_eX_f  = - Y_e Y_f$. 
A signed set is called a \emph{vector} of $\cM$ if and only if it is orthogonal to every covector.
Equivalently, a signed set is a vector of $\cM$ if and only if it is orthogonal to every tope.
The circuits are the minimal vectors of $\cM$.
The vectors of an oriented matroid define a dual oriented matroid, hence the vector and covector axioms are identical.
Minimal covectors are know as \emph{cocircuits}. 
They also satisfy the circuit axioms. 
		
For a given oriented matroid $\cM$, each one of the set of covectors $\cL(\cM)$, the set of topes $\cW(\cM)$, the set of vectors $\cV(\cM)$, and the set of circuits $\cC(\cM)$ is sufficient to recover all of the others.

\subsection{Combinatorial codes}\label{sec:CCbackground}
A combinatorial code $\scC$ is a collection of subsets of a finite set $V$, i.e.\ $\scC \subseteq 2^{V}$.
Typically, we take $V = [n]$.
	
Given an arbitrary set $X$ and collection $\cU = \{U_1,\dots,U_n\}$ with each $U_i \subseteq X$,
the \emph{code of the cover $\cU$ (relative to $X$)} is
	\[ \code(\cU,X) := \Biggl\{ \sigma \subseteq [n] \Biggm\vert \bigcap_{i\in\sigma} U_i \setminus \bigcup_{j\notin\sigma} U_j \neq \varnothing \Biggr\}. \] 
Note we do not require $X = \bigcup_{i\in[n]} U_i$; indeed, $\varnothing \in \code(\cU,X)$ if and only if $\bigcup_{i\in[n]} U_i \subsetneq X$.
A code $\scC$ is called \emph{open convex} if there exists a collection $\cU = \{U_1,\dots,U_n\}$ of open convex sets and an open convex set $X \subseteq \R^d$, such that $\scC = \code(\cU,X)$, for some $d$. We will refer to open convex codes simply as convex codes. 
	
\begin{ex}
  Let $U_i$ denote the open half-space on the positive side of hyperplane $H_i$ in \cref{F:OMandCodeExample}(a), i.e.\ $U_i = \{x \in \R^2 \mid \ell_i(x) > 0\}$. Then, $\code(\cU,\R^n)$ is the combinatorial code with codewords as labeled in \cref{F:OMandCodeExample}(b).
	\end{ex}
	
		
		
Morphisms of combinatorial codes were defined in \cite{jeffs2019morphisms} in terms of trunks.
For $\s\subseteq[n]$, the \emph{trunk of $\s$ in $\scC$} is the set of codewords which contain $\s$,
\[ \tk_\scC(\sigma) := \{\tau\in \scC \mid \sigma \subseteq \tau\}. \]
A subset of $\scC$ is a trunk if it is equal to $\tk_\scC(\sigma)$
for some $\sigma \subseteq [n]$ or if it is empty.
A \emph{simple trunk} is the trunk of a singleton set.
A map $f: \scC \to \scD$ is a \emph{morphism of codes} if the preimage of each
trunk of $\scD$ is a trunk of $\scC$. Any set of trunks $T_1, \ldots, T_m\subseteq\scC$ defines a morphism $f:\scC \to 2^{[m]}$ by $f(\sigma) := \{i \mid \sigma\in T_i\}$, and any code morphism $f: \scC\to \scD$ can be obtained in this way \cite[Proposition 2.12]{jeffs2019morphisms}.
The class of codes, together with these morphisms, forms the category $\Code$. 
		
	
A subset $\sigma \subseteq [n]$ can be encoded as a point $c \in \F_2^n$ by setting
$c_i = 1$ for $i \in \sigma$ and $c_i = 0$ for $i \notin \sigma$.
Hence, a code $\scC \subseteq 2^{[n]}$ can equivalently be thought of as a variety
$\scC \subseteq \F_2^n$.
The \emph{vanishing ideal} of a code $\scC$ is the ideal
	\[ I_\scC := \{ f(\mathbf{x}) \in \F_2[x_1,\dots,x_n] \mid f(c) = 0 \mbox{ for all } c \in \scC\}, \]
and the \emph{neural ring} of $\scC$ is the quotient ring $R_\scC = \F_2[x_1,\dots,x_n] / I_\scC$.
The vanishing ideal $I_\scC$ is a \emph{pseudo-monomial ideal}, meaning it is generated by products of the form $\prod_{i\in\s}x_i \prod_{j\in\t}(1-x_j)$, called pseudo-monomials.
As with circuits, we distinguish between \emph{proper} pseudo-monomials, with $\s$ and $\t$ disjoint, and \emph{improper} pseudo-monomials, which are divisible by some $x_i(1-x_i)$.
We will briefly discuss the vanishing ideal and neural ring in \cref{S:neuralring}, but
many more details can be found in \cite{curto2013neural}.
For clarity, we will denote pseudo-monomials and monomials with superscripts, i.e.\
	\[ x^\s (1-x)^\t := \prod_{i\in\s} x_i \prod_{j\in\t}(1-x_j) \qquad \mbox{and} \qquad x^\s y^\t := \prod_{i\in\s} x_i \prod_{j\in\t}y_j. \]

\subsection{Codes from oriented matroids}\label{sec:codesfromOMs}

Consider an oriented matroid $\cM$ on ground set $E$.
We can consider the positive parts of covectors (respectively, topes) as a code on $E$, which we denote $\sfL^+\cM$ (respectively $\sfW^+\cM$) to emphasize the change in categories:
	\[
		\sfL^+\cM := \{ X^+ \mid X \in \cL(\cM)\} \qquad \text{and} \qquad
		\sfW^+\cM := \{ W^+ \mid W \in \cW(\cM)\}. \]
In \cref{sec:intersection,sec:non_convex} we will show how $\sfL^+$ relates representability of oriented matroids with convexity of codes.
In \cref{sec:algebra}, we will examine the functorial properties of $\sfW^+$; culminating in a proof of \cref{thm:foc}.



If $\cM$ is the matroid of a hyperplane arrangement the code $\sfL^+\cM$ matches the code of the cover given by positive sides of the hyperplanes (as in \cref{F:OMandCodeExample}). 
This extends to any topological representation of $\cM$ by a pseudosphere arrangement (as introduced in \cite{folkman1978oriented}).
\begin{obs}\label{prop:covectors_match_cover}
	If  $\cM$ is any oriented matroid and $\{S_e\}_{e\in E}$ is an oriented pseudosphere arrangement topologically realizing $\cM$, then 
		\[\sfL^+\cM = \code(\{S_e^{+}\}_{e\in E}, \mathbb S^d). \]
		
	In particular, if $\cM$ is a representable oriented matroid and $\{H_e\}_{e\in E}$ is an oriented hyperplane arrangement realizing $\cM$, then 
		\[ \sfL^+\cM = \code(\{H_e^{+}\}_{e\in E},  \mathbb R^{d}). \]
\end{obs}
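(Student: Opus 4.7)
The plan is to unwind both sides of the claimed equality in terms of the sign-vector data that points of the ambient space carry under the pseudosphere arrangement. By the topological representation theorem, to say that $\{S_e\}_{e \in E}$ topologically realizes $\cM$ means that the map $\sigma : \R^{d+1} \to \{+,0,-\}^E$ sending $x$ to the sign vector with $\sigma(x)_e = +, -, 0$ according as $x \in S_e^+$, $x \in S_e^-$, or $x \in S_e$, has image exactly $\cL(\cM)$. Once this identification is in hand, both $\sfL^+\cM$ and $\code(\{S_e^+\}_{e \in E}, \R^{d+1})$ will be recognized as two encodings of the same data.

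I would first prove $\sfL^+\cM \subseteq \code(\{S_e^+\}_{e \in E}, \R^{d+1})$. Given $X \in \cL(\cM)$, there exists $x \in \R^{d+1}$ with $\sigma(x) = X$. Then $x \in S_e^+$ exactly when $e \in X^+$ and $x \notin S_e^+$ when $e \notin X^+$, so $x$ lies in $\bigcap_{e \in X^+} S_e^+ \setminus \bigcup_{f \notin X^+} S_f^+$, witnessing that $X^+$ belongs to the code of the cover. For the reverse inclusion, suppose $\tau \in \code(\{S_e^+\}_{e \in E}, \R^{d+1})$, so there is $x \in \R^{d+1}$ with $x \in S_e^+$ iff $e \in \tau$. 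The sign vector $\sigma(x)$ is then a covector of $\cM$ with positive part $\tau$, so $\tau = \sigma(x)^+ \in \sfL^+\cM$.

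The representable case follows immediately: a central hyperplane arrangement in $\R^d$ is a special (linear) pseudosphere arrangement realizing its own oriented matroid $\cM(\cH)$, so the argument above goes through verbatim with $\R^d$ in place of $\R^{d+1}$ and $H_e^+$ in place of $S_e^+$. Since the entire proof amounts to translating between two equivalent encodings of the same sign-vector data, no step is a real obstacle; the only care required is to reconcile the ambient-space and dimension conventions used by various statements of the topological representation theorem (pseudospheres in $S^d$ versus pseudohyperplanes in $\R^{d+1}$) with the ambient space used in the definition of the code of a cover.
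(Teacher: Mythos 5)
Your proposal is correct and takes essentially the same route as the paper's proof: define the sign map $x \mapsto \sigma(x)$, note that the covectors of $\cM$ and the codewords of the cover are two encodings of its values, and match positive parts. The one point you flag at the end (ambient $\R^{d+1}$ versus $S^d$) is precisely the paper's only additional remark: points of $\R^{d+1}$ off the sphere carry the all-zero sign vector, which supplies the codeword $\emptyset$ corresponding to the zero covector $\emptyset \in \cL(\cM)$.
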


The map $\sfL^+$ is better behaved geometrically than $\sfW^+$. In particular, \cref{prop:covectors_match_cover} fails for $\sfW^+$. For instance, in the hyperplane arrangement pictured in \cref{F:OMandCodeExample}, $\emptyset$ is a codeword in the code of the cover given by the positive open half-spaces, but is not the positive part of any tope. 

\begin{rmk}
	If $\cM$ is an acyclic oriented matroid, then the the signed set $-E$ is a tope.
	Thus, if $S \subseteq E$ is the positive part of some covector $X\in \cL(\cM)$, then $S$ is also the positive part of the tope $X\circ -E\in \cW(\cM).$
	Thus, on acyclic oriented matroids, $\sfW^+$ and $\sfL^+$ coincide.
\end{rmk}


 \section{Intersection-closed families and morphisms}
\label{sec:intersection}
In \cite{jeffs2019morphisms}, Jeffs shows that the image of a convex code under a code morphism, as well as any trunk of a convex code, is itself a convex code. From this observation, he defines the poset of isomorphism classes of codes $\pcode$ in which convex codes form a down-set:
if $\scD \leq \scC$ in $\pcode$ and $\scC$ is convex, then so is $\scD$.
In this section, we generalize this statement to intersection-closed families,
of which open convex subsets of $\R^d$ is one example.
A family $\cF$ of subsets of a topological space is called {\em intersection-closed} if it is closed under finite intersections and contains the empty set. 
We say that a neural code $\scC$ is {\em $\cF$-realizable} if $\scC= \code(\cU, X)$ for some $\cU \subseteq \cF$ and $X\in \cF$.
For instance, a neural code is convex if and only if it is $\cF$-realizable for the set $\cF$ of convex open subsets of some $\R^d$. 
Then, using this result, we prove \cref{thm:polytope_matroid}.

We recall the relevant definitions.
Two codes $\scC$ and $\scD$ are isomorphic if there is a bijective code morphism $f:\scC \to \scD$
whose inverse is also a code morphism.
If there is a sequence of codes $\scC = \scC_0,\scC_1,\dots,\scC_k=\scD$ such that each successive code is either the image of a morphism from or a trunk of the preceding code, we say $\scD$ is a \emph{minor} of $\scC$ \cite{jeffs2021embedding}.
Codes are then quasi-ordered by setting $\scD \leq \scC$ if $\scD$ is a minor of $\scC$.
The poset of isomorphism classes of codes induced by this order is denoted $\pcode$.

\begin{prop} \label{lem:int_closed}
	For any intersection closed family $\cF$,
	if $\scC$ is $\cF$-realizable and  $\scD \leq \scC$, then $\scD$ is $\cF$-realizable. 
\end{prop}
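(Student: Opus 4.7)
The plan is to handle the two ways that $\scD \leq \scC$ can arise separately: either $\scD$ is a trunk of $\scC$, or $\scD = f(\scC)$ for a code morphism $f$. Since the partial order is the transitive closure of these two generating relations, and since $\cF$-realizability is obviously preserved under isomorphism, it suffices to show closure under each operation individually and then compose.

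For the trunk case, I would fix a realization $\scC = \code(\cU, X)$ with $\cU = \{U_1,\ldots,U_n\} \subseteq \cF$ and $X \in \cF$, and $\sigma \subseteq [n]$. The candidate realization of $\tk_\scC(\sigma)$ is $\code(\cU, X')$ where $X' := X \cap \bigcap_{i \in \sigma} U_i$. Intersection-closedness puts $X' \in \cF$. The verification is a one-line unpacking: the atom $\bigcap_{i\in\tau} U_i \setminus \bigcup_{j \notin \tau} U_j$ meets $X'$ iff it is nonempty and $\sigma \subseteq \tau$, giving exactly the codewords in $\tk_\scC(\sigma)$.

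For the morphism case, write $f$ as in \cite[Proposition 2.12]{jeffs2019morphisms} via trunks $T_i = \tk_\scC(\sigma_i)$ for $i = 1,\ldots,m$ (allowing $T_i = \varnothing$). Set
\[
V_i := \begin{cases} \varnothing & \text{if } T_i = \varnothing,\\ X \cap \bigcap_{k\in\sigma_i} U_k & \text{otherwise,} \end{cases}
\]
with the convention that an empty intersection over $k$ equals $X$, so $V_i = X$ when $\sigma_i = \varnothing$. Each $V_i$ lies in $\cF$ by the two defining properties of an intersection-closed family (containing $\varnothing$, closed under finite intersection, and $X \in \cF$ to absorb the empty intersection convention). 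I would then check $\code(\{V_1,\ldots,V_m\}, X) = f(\scC)$ pointwise: for $p \in X$ with $\scC$-codeword $\mu(p) = \{k : p \in U_k\}$, the condition $p \in V_i$ is equivalent to $\sigma_i \subseteq \mu(p)$, i.e.\ to $\mu(p) \in T_i$, so the codeword of $p$ in the new realization is precisely $f(\mu(p))$; letting $p$ range over $X$ gives $f(\scC) = \scD$.

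There is no serious obstacle; the main issue is purely bookkeeping around the edge cases $\sigma_i = \varnothing$, $T_i = \varnothing$, and $\varnothing \in \scC$ (which is allowed iff $\bigcup_i U_i \subsetneq X$), all of which are absorbed by the assumptions that $\cF$ is closed under finite intersections and contains $\varnothing$. This also mirrors and generalizes the construction Jeffs uses for the convex case, replacing ``open convex'' with the generic intersection-closed family throughout.
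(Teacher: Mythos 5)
Your proof is correct and follows essentially the same route as the paper: the same two-case split (image under a morphism, trunk), the same construction of the sets $V_i$ as intersections over the defining trunks, and the same pointwise verification that codewords map as $f$ dictates. The only cosmetic difference is in the trunk case, where you restrict only the ambient set $X$ while the paper also intersects each $U_i$ with $\bigcap_{j\in\sigma} U_j$ (so that the sets remain subsets of the new ambient set, per the convention $U_i \subseteq X$); intersection-closedness makes this adjustment immediate.
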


\begin{proof}
We first check the case $\scD = f(\scC)$.  This closely follows the proof of Theorem 1.4 in \cite{jeffs2019morphisms},  since the only property of convex sets this proof uses is that the family of open convex subsets of $\R^d$ is closed under finite intersection.  We repeat the details here. Let $\scC\subseteq 2^{[n]}$, $\scD\subseteq 2^{[m]}$,  and $T_1, \ldots, T_m$ be the trunks in $\scC$ that define the morphism $f: \scC \to \scD$. Let $\{U_1, \ldots, U_n\} \subseteq \cF$ be an $\cF$-realization of $\scC$. 

If $T_j$ is nonempty, let $\sigma_j$ be the intersection of all elements of $T_j$, which is the unique largest subset of $[n]$ such that $T_j = \tk_\scC(\sigma_j)$.
Then, for $j\in [m]$, define 
\begin{align*}
V_j = \begin{cases}
\emptyset & T_j = \emptyset\\
\bigcap_{i\in \sigma_j} U_i & T_j \neq \emptyset
\end{cases}
\end{align*}
Since $\cF$ is closed under finite intersection and contains the empty set, $V_j\in \cF$ for all $j\in [m]$. Thus, it suffices to show that the code $\scE$ that they realize is $\scD$. To see this, note that we can associate each point $p\in X$ to a codeword in $\scC$ or $\scE$ by $p\mapsto \{i\in [n] \mid p\in U_i\}$ and $p\mapsto\{j\in [m] \mid p\in V_j\}$. Then let $p\in X$ be arbitrary, and let $c$ and $e$ be the associated codewords in $\scC$ and $\scE$ respectively. By the definition of the $V_j$, we have that $c\in T_j$ if and only if $j\in e$, which is equivalent to $e = f(c)$. Since $p$ was arbitrary and every codeword arises at some point, we conclude that $\scE = f(\scC) = \scD$, as desired. 

Next, we check the case $\scD = \tk_{\scC} (\sigma)$.  In this case, let $\scC \subseteq 2^{[n]}$, $\sigma \subseteq [n]$, $\cU = \{U_1, \ldots, U_n\}$ be a $\cF$-realization of $\scC$. Then for define $\cV = \{V_1, \ldots, V_n\}$ where 
\[ V_i = U_i \cap \left( \bigcap_{j\in \sigma} U_j\right),
\qquad \text{and}\qquad Y = X \cap \left( \bigcap_{j\in \sigma} U_j\right). \]
Then $\scD = \code(\cV, Y)$. To check this, as above, we can associate each point $p\in Y$ to a codeword by $p\mapsto\{j\in [n] \mid p\in V_j\}$. Since $Y =  X \cap \left( \bigcap_{j\in \sigma} U_j\right)$, each of these codewords will contain $\sigma$, and we will obtain every codeword of $\scC$ containing $\sigma$ in this way. 
\end{proof}

\begin{figure}[h]
  \includegraphics[width=\textwidth]{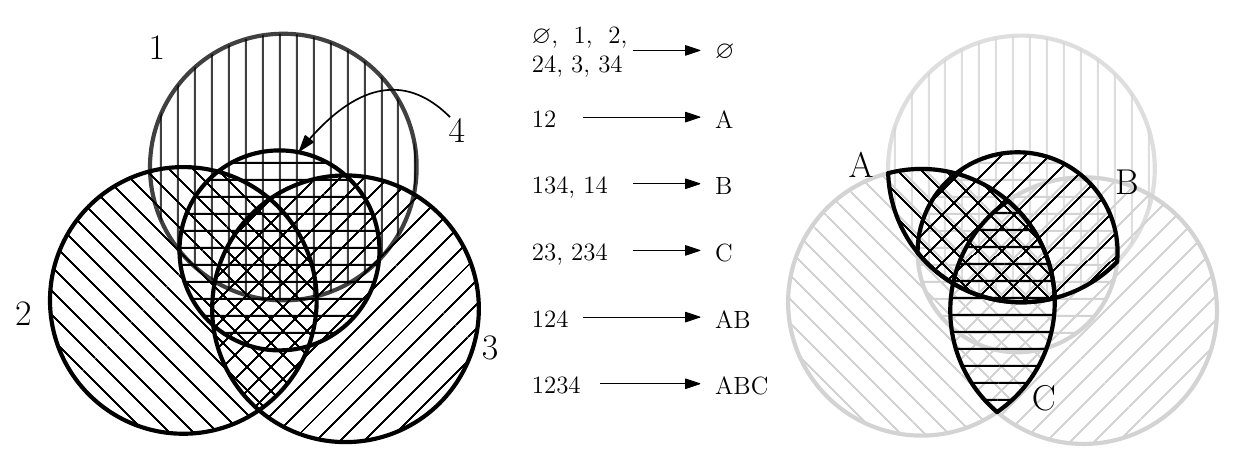}
  \caption{Example of construction from proof of \cref{lem:int_closed}}
\end{figure}

Our first application of \cref{lem:int_closed} is to good cover codes.  A code $\scC$ is a \emph{good cover code} if there exist open sets $U_1, \ldots, U_n$ realizing $\scC$ which form a good cover, i.e. all intersections $\bigcap_{i\in \sigma} U_i$ are either empty or contractible. 
Good cover codes are precisely the codes with no local obstructions, as proved by \cite[Theorem 3.13]{chen2019neural}.  
Codes with local obstructions formed the first known class of non-convex codes \cite{curto2017makes}.  
Recall the \emph{link} of a face $\s$ in a simplicial complex $\D$ is the subcomplex
	\[ \link_\s(\D) = \{ \t\in \D \mid \s\cap\t = \varnothing, \s\cup\t\in\D\}. \] 
For a code $\scC$, $\D(\scC)$ is the simplicial complex of $\scC$, obtained by taking the closure of $\scC$ under taking subsets.
A neural code $\scC$ has a \emph{local obstruction} if there is some $\sigma \in \D(\scC) \sm \scC$ such that $\link_\sigma(\Delta(\scC))$ is not contractible.

We show that codes with no local obstructions form a down-set in $\pcode$. 
The only requirement to be an open set in some good cover is contractibility, and the family of contractible sets is not intersection-closed.
Instead, we consider the sets $U_1, \ldots, U_n$ in one particular good cover and their intersections as our intersection-closed family. 

%

\begin{cor} 
The set of codes with no local obstructions is a down-set in $\pcode$. \label{thm:good_cover}
\end{cor}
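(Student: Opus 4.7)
My plan is to combine Chen, Frick, and Shiu's characterization of codes with no local obstructions as good cover codes (Theorem 3.13 of \cite{chen2019neural}) with \cref{lem:int_closed}, by choosing the intersection-closed family to consist of all intersections of sets from a given good cover. The point is that once a good cover is fixed, the family it generates under intersection is automatically intersection-closed and consists of empty or contractible sets, making it a legal choice of $\cF$.

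First I would take a code $\scC$ with no local obstructions and, via \cite[Theorem 3.13]{chen2019neural}, obtain a good cover realization $(\cU,X)$ with $\cU = \{U_1,\ldots,U_n\}$. Without loss of generality I can take $X$ to be contractible: for a realization in $\R^d$ one can simply replace $X$ by $\R^d$, and this neither changes the code nor the good cover property of $\cU$. I would then let $\cF$ be the family generated by $\{U_1,\ldots,U_n,X\}$ under finite intersection, adjoined with $\emptyset$. By construction $\cF$ is intersection-closed, and every element of $\cF$ is either empty or contractible, since the $U_i$ form a good cover and $X$ is contractible.

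Next I would observe that $(\cU,X)$ is an $\cF$-realization of $\scC$, so \cref{lem:int_closed} produces an $\cF$-realization $(\cV,Y)$ of any $\scD \leq \scC$. Because $\cF$ is intersection-closed and each of its members is empty or contractible, every intersection of sets from $\cV$ again lies in $\cF$ and is therefore empty or contractible. Hence $\cV$ is a good cover realizing $\scD$, and applying \cite[Theorem 3.13]{chen2019neural} once more shows that $\scD$ has no local obstructions. Since this holds for all $\scD \leq \scC$, the set of codes with no local obstructions forms a down-set in $\pcode$.

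The only subtle point is the reduction to a contractible ambient space $X$; this is automatic for realizations in $\R^d$, and in greater generality one can always enlarge $X$ to a contractible neighborhood without disturbing the good cover condition, so this step presents no real obstacle.
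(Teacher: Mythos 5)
Your argument is correct and is essentially the paper's own proof: apply \cite[Theorem 3.13]{chen2019neural} in both directions and feed the intersection-closed family generated by a fixed good cover into \cref{lem:int_closed}. The only quibble is your claim that enlarging $X$ to $\R^d$ "neither changes the code" — it can add the empty codeword when $X = \bigcup_i U_i$ — but this normalization is inessential, since the sets produced by \cref{lem:int_closed} are intersections of the $U_i$, so the good cover property of the resulting realization does not depend on contractibility of the ambient set.
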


\begin{proof}
	Let $\scC$ be a code with no local obstructions. 
	By \cite[Theorem 3.13]{chen2019neural},  $\scC$ is a good cover code.
	Fix a good cover $\cU = \{U_1, \ldots, U_n\}$ realizing $\scC$.
	Let $\cF_\cU$ denote the family of sets obtained by arbitrary intersections of sets in $\cU$, together with the empty set.
	This family still forms a good cover.
	Any code $\scD \leq \scC$ is $\cF_\cU$-realizable by \cref{lem:int_closed}; it is therefore a good cover code and thus has no local obstructions.
\end{proof}

Armed with these results, we look at the codes of oriented matroids and those lying below them. 
In particular, we examine the intersection-closed family of interiors of convex polytopes in $\R^n$.  
We say that a code $\cC$ is polytope convex if there exists a collection of interiors of convex polytopes $\cP = \{P_1, \ldots, P_n\}$ and a bounding convex polytope $X$ such that $\cC = \code(\cP, X)$. 
Note that polytope convex codes are open convex. 
\cref{lem:int_closed} implies that the image of any polytope code under a surjective morphism is also a polytope code. 
Thus, since the codes of representable oriented matroids correspond to  codes of hyperplane arrangements, all codes which lie below a representable oriented matroid are polytope codes.  
We prove the converse, showing that every polytope code is itself the image of the code of an oriented matroid under some surjective morphism. 
This demonstrates that polytope codes are a down-set generated by the set of representable oriented matroid codes. 

We begin by noting that codes below oriented matroids have no local obstructions. This result appears in different language in \cite{edelman2002convex}. We flesh this out. 

\begin{prop} \label{prop:no_local}
  Let $\cM$ be an oriented matroid.
  If $\scC \leq \sfL^+\cM$ in $\pcode$, $\scC$ is a good cover code, and thus has no local obstructions. 
\end{prop}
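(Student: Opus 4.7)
The plan is to show $\sfL^+\cM$ itself is a good cover code; combined with \cref{thm:good_cover}, this immediately gives that every $\scC \leq \sfL^+\cM$ has no local obstructions, via the equivalence between good cover codes and codes without local obstructions proven in \cite[Theorem 3.13]{chen2019neural}.

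To produce a good cover realizing $\sfL^+\cM$, I would apply the topological representation theorem \cite{folkman1978oriented}: $\cM$ is realized by an oriented pseudosphere arrangement $\{S_e\}_{e\in E}$, and \cref{prop:covectors_match_cover} gives $\sfL^+\cM = \code(\{S_e^{+}\}_{e\in E}, \R^{d+1})$. The key geometric input is that every nonempty intersection $\bigcap_{e\in\sigma} S_e^+$ is a relatively open cell of the induced regular CW decomposition, and is therefore homeomorphic to an open ball, hence contractible. Thus $\{S_e^+\}_{e\in E}$ together with its intersection pattern in $\R^{d+1}$ is a good cover, making $\sfL^+\cM$ a good cover code.

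An equivalent formulation, which fits the framework of this section, is to let $\cF$ denote the intersection-closed family consisting of $\emptyset$ together with all finite intersections of the $S_e^+$, and to apply \cref{lem:int_closed} directly: each nonempty member of $\cF$ is a contractible cell, $\sfL^+\cM$ is $\cF$-realizable, so any $\scC \leq \sfL^+\cM$ is $\cF$-realizable, and the resulting realization is automatically a good cover because its sets and all their intersections lie in $\cF$.

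The main obstacle — which I would treat as a black box rather than re-prove — is the contractibility of nonempty intersections of positive open pseudo-hemispheres. Pseudosphere arrangements can be topologically subtle, so this deserves a careful citation rather than a hand-wave; the relevant material is in \cite[Chapter 5]{bjorner1999oriented}, and as the paper notes the statement also appears in different language in \cite{edelman2002convex}.
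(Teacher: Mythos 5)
Your proposal is correct and follows essentially the same route as the paper: represent $\cM$ by a pseudosphere arrangement, identify $\sfL^+\cM$ with the code of the positive pseudohemispheres via \cref{prop:covectors_match_cover}, invoke contractibility of the nonempty intersections of those hemispheres to get a good cover, and conclude with \cref{thm:good_cover}. The one caveat is that your stated reason for contractibility is not quite right --- $\bigcap_{e\in\sigma} S_e^+$ is in general a union of cells of the arrangement's decomposition, not a single relatively open cell --- but since you defer to the literature for this step anyway, the precise reference is \cite[Lemma~5.1.8]{bjorner1999oriented}, which is exactly what the paper cites.
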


\begin{proof}
Edelman, Reiner, and Welker define a simplicial complex $\Delta_{\mathrm{acyclic}}(\cM)$ which is identical to  $\Delta(\sfL^+\cM)$ \cite{edelman2002convex}. Proposition 11 and Lemma 13 of  \cite{edelman2002convex}  establish that if $\sigma \in  \Delta(\sfL^+\cM) \setminus \sfL^+\cM$, then $\link_{\sigma}  \Delta(\sfL^+\cM) $ is contractible. Thus,  $\sfL^+\cM$ has no local obstructions, and is therefore a good cover code. 
By \cref{thm:good_cover}, good cover codes form a down-set in $\pcode$, so if $\scC \leq \sfL^+\cM$ in $\pcode$, then  $\scC$ has no local obstructions. 
\end{proof}

\setcounter{introtheorem}{1}
\begin{introtheorem}\label{thm:polytope_matroid}
  A code $\scC$ is polytope convex if and only if there exists a
  representable oriented matroid $\cM$ such that 
  $\scC \leq\sfL^+ (\cM)$. 
\end{introtheorem}

\begin{proof}


	($\Rightarrow$)
	A polytope is an intersection of half-spaces, so this follows from \cref{prop:covectors_match_cover} and \cref{lem:int_closed}.
	
	($\Leftarrow$)
	Let $\scC$ be a polytope convex code with $(\cV,X)$ a realization of $\scC$ with convex polytopes $V_i$ and bounding convex set $X$.
	Without loss of generality, we can choose $X$ to be a convex polytope.
	Then each $V_i$ is the intersection of a collection of open half spaces $U_{i1}, \ldots, U_{ik_i}$, and $X$ is the intersection of open half spaces $X_1, \ldots, X_k$.
	Now, let $\scH= \code(\{U_{11}, \ldots, U_{1k_1}, \ldots, U_{nk_n}, X_1, \ldots, X_k\}, \R^d)$. Let $\scH'$ be the trunk of the neurons associated to $X_1, \ldots, X_k$.
	Now, we define a surjective morphism $f: \scH' \to \scC$ as follows.
	Choose trunks $T_1, \ldots, T_n$ of $\scH'$ by $T_i = \tk_{\scH'}(\{i1,\dots,ik_i\})$.
	Let $f$ be the morphism defined by the trunks $T_1, \ldots, T_n$.
	We now show that its image is $\scC$.  
	
	To do this, construct the realization of $f(\scH')$ given in the proof of \cref{lem:int_closed}. This construction gives the realization
	\begin{align*}
	V_j' = \bigcap_{i = 1}^{i = k_j}U_{ji}
	\end{align*}
	relative to the convex set $X = \bigcap_{i= 1}^k X_i$. 
	Thus, $f(\scH') = \code(\{V_1, \ldots, V_n\}, X) = \scC$. 
\end{proof}


In dimension two, every convex code admits a realization with convex polytopes \cite{bukh2022planar}.
Thus, in dimension two, we can strengthen Theorem \ref{thm:polytope_matroid} as follows: 

\begin{cor}
  A code $\scC$ including the empty codeword is open convex
  with minimal embedding
  dimension two if and only if there exists a representable
  affine oriented matroid $(\cM, g)$ of rank three such that
  $\scC \leq \tk_{\sfL^+(\cM)}(g)$. 
\end{cor}

\begin{proof}
  First suppose $\scC$ is a planar convex code. By intersecting
  each of the convex sets with the same sufficiently large ball, we obtain
  a realization of $\scC$ by bounded convex sets.
Then by Theorem 1 of \cite{bukh2022planar}, $\scC$ has a
realization with interiors of convex polygons in $\R^{2}$.
These convex polygons are intersections of half-spaces.
Let  $(\cM, g)$  be the affine oriented matroid of the
corresponding affine, oriented hyperplane arrangement.
Notice that $\cM$ is representable and rank three,
since it arises from the centralization of a hyperplane
arrangement in $\R^2$. The covectors in the affine space of
$(\cM, g)$ each contain $g$ in their positive part. 
Then by the argument used in the proof of Theorem
\ref{thm:polytope_matroid},  $\scC \leq \tk_{\sfL^+(\cM)}(g)$. 
 
Now suppose that $\scC \leq \tk_{\sfL^+(\cM)}(g)$,
where $(\cM, g)$ is a representable affine oriented matroid of rank two. 
Then by Theorem \ref{thm:polytope_matroid},
$\scC$ has a realization with intersections of half spaces in $\R^2$,
and is thus convex with minimal embedding dimension two. 
\end{proof}



\section{Non-convex codes} 
\label{sec:non_convex}
In dimensions three or higher, it is unknown whether every convex code has a realization with convex polytopes. However, the contrapositive to Theorem~\ref{thm:polytope_matroid} still helps us characterize non-convex codes. 
If $\scC$ is not convex, one of two possibilities hold: either $\scC$ does not lie below any oriented matroid,
or $\scC$ lies below only non-representable oriented matroids in $\pcode$.
In this section, we prove that codes with local obstructions as well as
``sunflower codes''
do not lie below {\em any} oriented matroids. We also construct a new class of
non-convex codes which lie below non-representable oriented matroids.

%

\subsection{Sunflower codes do not lie below oriented matroids}

The first example of a non-convex code with no local obstructions,
\[ \scC = \{\emptyset, 123, 13, 134, 14, 145, 23, 2345, 3, 34, 4, 45\}, \]
appeared in \cite{lienkaemper2017obstructions}. In \cite{jeffs2019morphisms}, Jeffs uses this code to construct a smaller non-convex code $\scC_2 \leq \scC$ with no local obstructions,
	\[ \scC_2 = \{\emptyset, 1236, 13, 135, 23,  234, 4, 456, 5, 6  \}. \]
This code  is minimally non-convex, in the sense that any code $\scC' < \scC_2$ in $\pcode$ is convex. The proofs that $\scC$ and $\scC_2$ are not convex depend on
the $n=3$ case of the following theorem:

%
%
%
%

\begin{thm}[\cite{jeffs2019sunflowers}, Theorem 1.1] \label{thm:sunflower} Let $U_1, \ldots, U_n$ be convex open sets in $\R^{n-1}$ such that for all $i, j\in [n]$, $U_i \cap U_j = \bigcap_{k\in [n]} U_k$. Then any hyperplane which passes through each of $U_1, \ldots, U_n$  passes through $\bigcap_{k\in [n]} U_k$. 
\end{thm}

Jeffs uses this theorem to construct an infinite family $\{\scC_n\}_{n\geq 2}$ of minimally non-convex codes with no local obstructions generalizing $\scC_2$; we refer to these as ``sunflower codes.'' In the rest of this subsection, we define the code $\scC_n$ for $n\geq 2$ and give a proof that for all $n\geq 2$, the code $\scC_n$ does not lie below any oriented matroid, representable or otherwise. 

\begin{defn}[\cite{jeffs2019sunflowers}, Definition 4.1]
  Let $n \geq 2$, $P = \{p_1,\ldots,p_{n+1}\}$ and $S = \{s_1,\ldots,s_{n+1}\}$ be sets of size $n+1$.
Denote by $\scC_n \subseteq 2^{P\cup S}$ the code that consists of the following codewords: 
\begin{itemize}
	\item $\varnothing$; 
	\item $S \cup \{p_{n+1}\}$; 
	\item $P$;
	\item the codeword $X \cup \{s_{n+1}\}$  for each $\varnothing \subsetneq X \subsetneq \{s_1,\dots,s_n\}$;
	\item the codewords $\{p_i\}$ for each $1 \leq i \leq n+1$;
	\item and $(S \setminus \{s_i\}) \cup \{p_i\}$ for each $1 \leq i \leq n$. 
  \end{itemize}
\end{defn}

We will refer to the regions indexed by $P$ as {\em petals},
and the regions indexed by $S$ as {\em simplices}.
\begin{figure} [ht!]
	\includegraphics[width = 4 in]{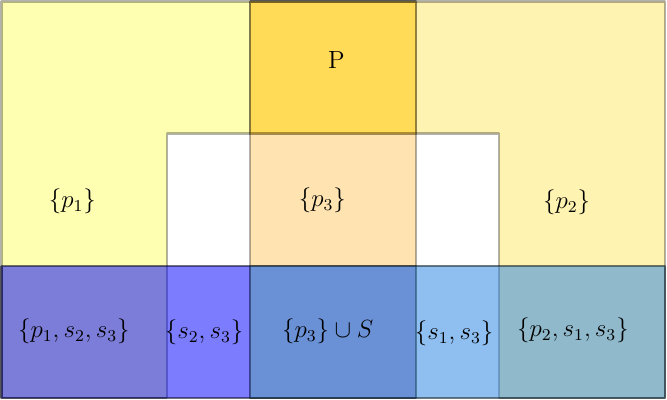}
	\caption{A good cover realization of $\scC_2~=~\{\emptyset, 23, 13, 4, 5, 6, 234, 135,  1236, 456\}.$ Here $P = \{1,2,3\}$ and $S = \{4,5,6\}$.} 
\end{figure}

The proof of  \cref{thm:bad_sunflower} depends on some basic facts about tope graphs of oriented matroids. The \emph{tope graph} $\cT$ of an oriented matroid $\cM$ is a graph whose vertices are the topes of $\cM$, and whose edges connect pairs of topes which differ by one sign. A subgraph $\mathcal Q \subseteq \cT$ is called $T$-convex if it contains the shortest path between any two of its members. Any $e\in E$ divides the tope graph into two \emph{half-spaces} $\cT_e^+ = \{W\in \cW \mid  e\in W^+\}$ and $\cT_e^- = \{W\in \cW \mid  e\in W^-\}$.
A subgraph $\cQ \subseteq \cT$ is $T$-convex if and only if it is an intersection of half-spaces \cite[Proposition 4.2.6]{bjorner1999oriented}.

\begin{introtheorem}
	For each $n \geq 2$, the code $\scC_n \not \leq \sfL^+\cM$ for any oriented matroid $\cM$.
\end{introtheorem}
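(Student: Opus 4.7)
The plan is to derive a contradiction assuming $\scC_n \leq \sfL^+\cM$ for some oriented matroid $\cM$, by extending \cref{thm:sunflower} to the pseudosphere setting and then reprising Jeffs' obstruction argument inside an oriented-matroid realization. By the Topological Representation Theorem $\cM$ admits a pseudosphere arrangement $\{S_e\}_{e\in E}$, and \cref{prop:covectors_match_cover} gives $\sfL^+\cM = \code(\{S_e^+\}_{e\in E}, \R^{d+1})$. The family $\cF$ of finite intersections of the positive pseudohemispheres $S_e^+$ is intersection-closed, so \cref{lem:int_closed} yields a realization $\scC_n = \code(\cV, X)$ with every $V_i \in \cF$.

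Next, I would extract the sunflower identity directly from the codeword list of $\scC_n$: because the only codeword containing two or more elements of $\{s_1,\ldots,s_{n+1}\}$ is the full simplex $S$, one has
\[ V_{s_i} \cap V_{s_j} \;=\; \bigcap_{k=1}^{n+1} V_{s_k} \qquad \text{for all } 1 \leq i < j \leq n+1, \]
while the codewords of the form $(P \setminus \{p_k\}) \cup \{s_k\}$ certify that each petal $V_{p_k}$ meets every $V_{s_j}$ with $j \neq k$ at points outside the common intersection $\bigcap V_{s_k}$.

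I would then state and apply a pseudosphere analogue of \cref{thm:sunflower}: if $V_{s_1},\ldots,V_{s_{n+1}}$ are intersections of positive pseudohemispheres satisfying the sunflower identity above, then every pseudosphere $S_e$ of the arrangement that meets each $V_{s_i}$ also meets $\bigcap_k V_{s_k}$. Granting this, the contradiction Jeffs extracts in $\R^n$ transfers: the petal codewords force a pseudosphere $S_e$ witnessing the separation of some $V_{p_k}$ from the other petals while intersecting each $V_{s_j}$, $j\neq k$, and the pseudosphere sunflower theorem then compels this pseudosphere to meet $\bigcap V_{s_k}$. That in turn produces a codeword simultaneously containing all of $S$ and an element of $P$, which does not appear in $\scC_n$.

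The main obstacle is the pseudosphere sunflower theorem itself, since Jeffs' proof in $\R^{n-1}$ uses a genuine affine separating hyperplane with no literal analogue for pseudospheres. My plan is to exploit the fact that every member of $\cF$ is contractible, indeed a topological ball by \cite[Lemma 5.1.8]{bjorner1999oriented}, and to induct on the rank of $\cM$ via deletion and contraction along $S_e$. At each step the sunflower identity is preserved after contracting along $S_e$, reducing to a pseudosphere arrangement of smaller rank in which the inductive hypothesis applies, with a planar pseudoline base case treated by hand. A fallback, if this topological induction proves delicate, is to argue directly on the tope graph $\cT(\cM)$: trunks pull back to $T$-convex intersections of half-spaces, the sunflower identity forces these half-spaces into a product-like configuration, and the petal codewords are then incompatible with this configuration.
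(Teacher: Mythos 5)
Your opening reduction is fine: via the topological representation theorem, \cref{prop:covectors_match_cover}, and \cref{lem:int_closed} you may indeed assume $\scC_n$ is realized by finite intersections of positive pseudohemispheres (this parallels the paper's use of \cref{thm:good_cover} and \cref{prop:no_local}). After that, however, there are two genuine problems. First, the combinatorics is wrong: the sunflower identity you extract is false for $\scC_n$. The codewords $X\cup\{s_{n+1}\}$ with $\emptyset\subsetneq X\subsetneq\{s_1,\dots,s_n\}$, and likewise $S\setminus\{s_i\}\cup\{p_i\}$, contain two or more elements of $S$ without containing all of $S$, so $V_{s_i}\cap V_{s_j}\neq\bigcap_k V_{s_k}$ in general; the sunflower in this code is formed by the \emph{petals} indexed by $P$ (the only codeword containing two elements of $P$ is $P$ itself), and there are no codewords of the form $(P\setminus\{p_k\})\cup\{s_k\}$ — you have swapped the roles of $P$ and $S$. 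Second, and more seriously, the entire argument is funneled through a ``pseudosphere sunflower theorem'' that you do not prove, and this is not a routine transfer of \cref{thm:sunflower}. Jeffs' proof uses convexity of the sets and genuine affine separating hyperplanes (and a dimension hypothesis: $n$ convex sets in $\R^{n-1}$), none of which survive in your setting, where the $V_i$ are intersections of pseudohemispheres in an ambient space of unrelated dimension. Your deletion/contraction sketch is not carried out (contracting along $S_e$ changes the sets and the induced code, and preservation of the sunflower structure is asserted, not argued), the claim that ``the contradiction Jeffs extracts in $\R^n$ transfers'' is likewise unsubstantiated because that extraction again uses convexity of the petals, and the tope-graph ``fallback'' is only a gesture. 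Note that the paper itself poses the existence of an oriented-matroid analogue of Jeffs' sunflower theorem as an open question in \cref{S:questions}, so your key lemma is, as far as this paper knows, an open problem rather than a lemma.

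For comparison, the paper's proof avoids any sunflower-type geometric statement altogether. Writing $\scC_n=f(\sfL^+\cM)$ with $f$ defined by trunks $\tk(\pi_i),\tk(\sigma_i)$, it proves two purely combinatorial claims: (1) using that images of oriented matroid codes have no local obstructions (\cref{prop:no_local}), a link-contractibility argument on an auxiliary morphism produces a tope whose positive part contains $\bigcup_i\sigma_i$, $\bigcap_{j\le n}\pi_j$, and $\pi_{n+1}$; and (2) a $T$-convexity argument in the tope graph shows that any tope containing $\pi_{n+1}\cup\bigcap_{j\le n}\pi_j$ in its positive part must contain all of $\bigcup_j\pi_j$. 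Together these force the forbidden codeword $P\cup S$, with no induction on rank or dimension. If you wish to salvage your route, the burden is precisely to prove the pseudosphere sunflower statement, which is substantially harder than the remainder of your outline suggests.
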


\begin{proof}
Fix $n \geq 2$.
Suppose to the contrary that there is an oriented matroid $\cM$ such that $\scC_n \leq \sfL^+\cM$.
For ease of notation, let $\scM$ denote the code $\sfL^+\cM$.
Since $\emptyset \in \scC_n$, we can assume without loss of generality that  $\scC_n = f(\scM)$  for some code morphism $f$.

Denote the ground set of $\cM$ by $E$. The map $f$ must be defined by
trunks
	 \[\tk_{\scM}(\pi_1), \dots, \tk_{\scM}(\pi_{n+1}), \tk_{\scM}(\sigma_{1}), \ldots,\tk_{\scM}(\sigma_{n+1}),\]
with $\pi_i,\sigma_i \subseteq E$ corresponding to $p_i$ and $s_i$ respectively. 

\vspace{3mm}
\noindent  {\bf Claim 1:} There is a tope $T$ of $\cM$ such that
    $\left(\bigcup_{i=1}^{n+1} \s_i\right)\cup \left(\bigcap_{j=1}^{n} \pi_j\right)\cup \pi_{n+1} \subseteq T^+$.\\
  Roughly speaking, we are producing a codeword in the intersection of the
  last petal and all simplices, which also lies in the convex hull of the other petals.
  
Define a morphism $g:\scM \to 2^{[n+1]}$ by the trunks $T_i = \tk_{\scM}(\tau_i)$, with 
$\tau_i = \sigma_i \cup \left(\,\,\bigcap_{j = 1}^{n}{ \pi_j}\,\right)$ for $i = 1, \ldots, n+1$.
Let $\scD = g(\scM)$.

Since $(S \setminus \{s_i\} )\cup \{p_i\} \in \scC_n$ for each $i \in [n]$, we deduce that $[n+1]\setminus i$ is a codeword of $\scD$ for each $i\in [n]$. Thus, $\link_{\{n+1\}}(\Delta(\scD))$ is either a hollow $(n-1)$-simplex or a solid $(n-1)$- simplex. Since we have defined $\scD$ as the image of an oriented matroid code, it cannot have local obstructions.
The codeword $\{n+1\}$ is not in $\scD$; if it were, then $f(g^{-1}(\{n+1\}))$
would contain a codeword of $\scC$ including $s_{n+1}$ without any other $s_i$. 
No such codeword
exists in $\scC$.
Thus $\link_{\{n+1\}}(\Delta(\scD))$ must be contractible and so
 must be a solid $(n-1)$-simplex; therefore, $[n+1]$ is a codeword of $\scD$. 

Based on the trunks defining $g$, we know that
$\left(\bigcup_{i =1}^{n+1}\sigma_i\right) \cup \left( \bigcap_{j = 1}^{n} \pi_j \right)\subseteq c$ for any codeword $c\in g^{-1}([n+1])$.
By definition of $f$, we must also have $S \subseteq c$ for any codeword $c \in f(g^{-1}([n+1]))$; however, the only codeword of $\scC_n$ which contains $S$ is $S \cup \{p_{n+1}\} $.
Thus, there is a codeword of $\scM$ containing $\left(\bigcup_{i =1}^{n+1}\sigma_i\right)
\cup \left(\bigcap_{j = 1}^{n} \pi_j \right)\cup \pi_{n+1}$. This implies that
$\cM$ has a covector $X$ such that 
 $\left(\bigcup_{i =1}^{n+1}\sigma_i\right)
\cup \left(\bigcap_{j = 1}^{n} \pi_j \right) \cup \pi_{n+1} \subseteq X^+$. To produce a tope satisfying the condition, take $T = X\circ W$ for any tope $W$ of $\cM$.

\vspace{3mm}
  
\noindent  {\bf Claim 2:}
$ \pi_{n+1} \cup \left( \,\,\bigcap_{ j=1}^{n}\,\pi_j\right) \subseteq T^+ $   implies $\,\,\bigcup_{ j=1}^{n+1}\,\pi_j \subseteq T^+ $ for any tope $T$ of $\cM$.\\
The intuition here is that the last petal must intersect the convex
hull of the other petals {\em only} in the common intersection of all petals.

\begin{figure}
\includegraphics[width = 2.5 in]{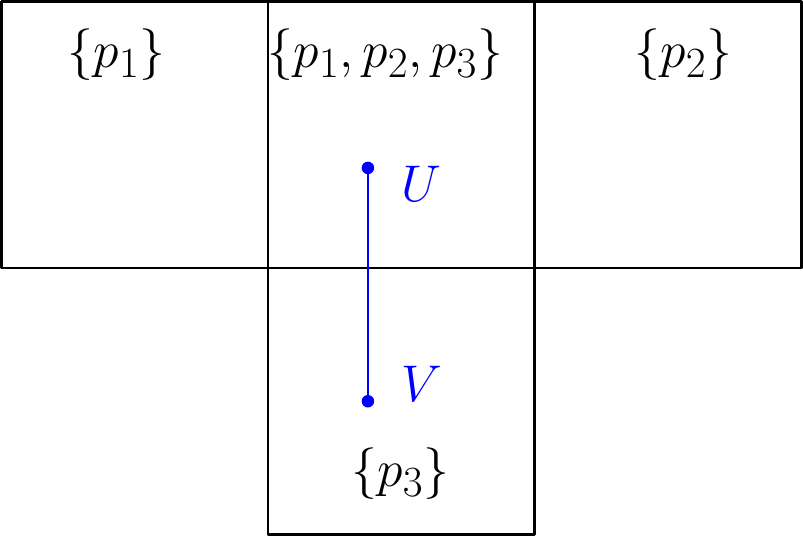}
\caption{Any path from a tope $U$ with $\left(\bigcup_{ j=1}^{n+1}\,\pi_j\right) \subseteq U^+$ to a tope $V$ with $\left(\bigcup_{ j=1}^{n+1}\,\pi_j\right) \not\subseteq V^+$ must cross an edge in  $\left(\bigcap_{ j=1}^{n+1}\,\pi_j\right)$. Analogously, a path from a point in the atom $P = \{p_1, p_2, \ldots, p_n\}$ to the atom $\{p_{n+1}\}$ must cross the boundaries of $p_1, p_2, \ldots, p_{n}$ all at one time. }
\end{figure}

Let $U$ be a tope with $\left(\bigcup_{ j=1}^{n+1}\,\pi_j\right) \subseteq U^+$. Such a tope must exist, since $P \in \scC_n$.   Suppose for the sake of contradiction that there exists a tope $V$ such that 
\[
\pi_{n+1} \cup \left( \,\,\bigcap_{ j=1}^{n}\,\pi_j\right) \subseteq V^+, \text{   but   }\bigcup_{ j=1}^{n+1}\,\pi_j \not \subseteq  V^+.
\]
Consider a shortest path from $U$ to $V$ in the tope graph of $\cM$.  Each edge of the tope graph is naturally labeled by the ground set element $e$ by which the two incident topes differ.  By the $T$-convexity of intersections of half-spaces in the tope graph, each tope along this path has
$\pi_{n+1} \cup \left( \,\bigcap_{ j= 1}^{n}\,\pi_j\right) $ in its positive part, so no edge is labeled with an element of $\bigcap_{ j=1}^{n}\,\pi_j$. 

Thus at some point along the path from $U$ to $V$, we must cross an edge $(T, W)$ labeled by a ground set element 
$e \in
\left(\,\,\bigcup_{ j=1}^{n+1}\,\pi_j \right) \setminus
\left(\,\,\bigcap_{ j=1}^{n+1}\,\pi_j \right)$.
Choose the first such edge  $(T, W)$ labeled with ground set element $e$. By our choice of $e$, there exist
$k, \ell \in [n]$ such that $e\in \pi_k$, and $e\notin \pi_\ell$.
This means $\pi_k \not\subseteq W^+$, whereas $\pi_\ell \subseteq W^+$. Then
$\{p_\ell, p_{n+1}\} \subseteq f(W^+)$, but $f(W^+) \neq P$. However, the only codeword of $\scC_n$ containing $\{p_\ell, p_{n+1}\}$ is $P$, so we have reached a contradiction.
Therefore, no such tope $V$ may exist.

\vspace{6mm}

\noindent By Claim 1, $\cM$ must have a tope $T$ which has
$\left(\bigcup_{i=1}^{n+1} \s_i\right)\cup \left(\bigcap_{j=1}^{n} \pi_j\right)  \cup \pi_{n+1} \subseteq T^+$.
Because $T$ satisfies
$\left(\bigcap_{i=1}^{n+1} \pi_i\right) \cup \pi_{n+1} \subseteq T^+$, Claim 2 implies
that $\bigcup_{i=1}^{n+1} \pi_i \subseteq T^+$.
Therefore,
$\left(\bigcup_{i=1}^{n+1} \pi_i\right) \cup \left(\bigcup_{i=1}^{n+1} \s_i\right) \subseteq T^+$,
but this implies $f(T) = P\cup S \in \scC_n$, a contradiction. 
\end{proof}

By showing that the family of codes $\{\scC_n\}_{n\geq 2}$ do not lie below oriented matroids, we have given an alternate proof that these codes do not have realizations with interiors of convex polytopes.
 This proof is significantly different in structure than the original proof that these codes are not convex using \cref{thm:sunflower}, which is in turn proved by induction on dimension. 
 In contrast, our proof makes no reference to rank or dimension, and does not use induction. 
 While the codes $\{\scC_n\}_{n\geq 2}$ are not open convex, they do have realizations with \emph{closed} convex sets, which can even be chosen to be (non-full dimensional) closed convex polytopes. 
 Notice that  \cref{thm:polytope_matroid}  establishes that if $\scC$ has a  realization with \emph{interiors} of convex polytopes, then  $\scC \leq \sfL^+\cM$. 
 However, the fact that a code has a realization with closed convex polytopes does not guarantee this. 
 Further, in showing that these codes do not lie below any oriented matroids at all, we have established that, even while these codes are good cover codes, their obstructions to convexity are somehow still topological in nature. 

\subsection{Representability and convexity}
	Having exhibited that many well-known non-convex codes do not lie below any oriented matroids at all, we now exhibit a family of non-convex codes which lie below non-representable oriented matroids.
	For each uniform, rank 3 affine oriented matroid $(\cM, g)$, we construct a code $\scC(\cM, g)$ which is convex if and only if $\cM$ is representable (recall a uniform oriented matroid is one in which all circuits have the same cardinality).
	Moreover, this code is always the image of an oriented matroid under a code morphism.

	
	Consider a uniform, affine oriented matroid $(\cM, g)$ of rank 3.
	A \emph{pseudoline} is a simple curve $L$ in $\R^2$, unbounded in both directions, which partitions the plane into unbounded pieces $\R^2 = L^+ \sqcup L \sqcup L^-$.
	By the topological representation theorem (\cite[Section 1.3]{bjorner1999oriented},\cite{folkman1978oriented}),
	$(\cM, g)$ can be represented by a uniform arrangement of piecewise linear pseudolines, that is, a family $\cP = \{L_i\}_{i\in[n]}$ of pseudolines such that each pair intersects exactly once and no more than two meet at any point.
	The sign vectors of this arrangement are the covectors of $(\cM,g)$.
	An example is illustrated in Figure~\ref{F:pseudoline_code}(a).


	Note that the oriented matroid of a pseudoline arrangement is completely determined by the order in which each line meets all of the other lines. We can record this information as follows: 
	Let $L_1, \ldots, L_n$ be a pseudoline arrangement.
	For each pseudoline, fix one end of the pseudoline as the ``head".
	Let $\pi_i(j)$ denote the index $k$ such that $L_j$ is the $k$-th pseudoline we encounter as we follow $L_i$ from the head to the tail. 
	
	We use this order to define a code $\scC(\cM, g)$.
	We then use the concept of order-forcing introduced in \cite{jeffs2020order} to prove that the code $\scC(\cM, g)$ is convex if and only if $\cM$ is representable.
	Order-forcing depends on \emph{feasible walks} in the \emph{codeword containment graph}.

	\begin{defn}\label{def:feasible}
		Let $\scC \subseteq 2^{[n]}$ be a neural code.
		The \emph{codeword containment graph} of $\scC$ is the graph $G_{\scC}$ whose vertices are codewords of $\scC$, with edges $\{\sigma,\tau\}$ when either $\sigma\subsetneq\tau$ or $\tau\subsetneq \sigma$. 
		A $\sigma,\tau$ walk  $\sigma=v_1,v_2,...,v_k=\tau$ in $G_\scC$ is called \emph{feasible} if $v_i\cap v_j\subseteq v_m$ for all  $1\le i<m<j\le k$.
		A sequence of codewords $\sigma_1,...,\sigma_k$ is \emph{order-forced} if every feasible $\sigma_1,\sigma_k$ walk contains that sequence as a subsequence.
	\end{defn}
	
	Order forcing constrains the realizations of a code by forcing certain sequences of codewords to correspond to straight-line paths in all convex realizations. In any realization of a code, the \emph{atom} corresponding to codeword $\s$ is the region $A_\s = \bigcap_{i \in \s}U_i \setminus \bigcup_{j\notin\s} U_j$.
	
	\begin{thm}[\cite{jeffs2020order}, Theorem 1.1]\label{thm:order-forcing}
	Let $\sigma_1, \sigma_2, \ldots, \sigma_k$ be an order-forced sequence of codewords in a code $\scC\subseteq 2^{[n]}$.
	Let $\cU = \{U_1, \ldots ,U_n\}$ be a (closed or open) convex realization of $\scC$, and let $x\in A_{\sigma_1}$ and $y\in A_{\sigma_k}$. Then the line segment $\overline{xy}$ must pass through the atoms of $\sigma_1, \sigma_2, \ldots, \sigma_k$, in this order.
	\end{thm} 
	
	Now, we construct the code $\scC(\cM, g)$
	so that a sequence of codewords along each pseudoline is order-forced. 

	\begin{defn}
		Let $(\cM, g)$ be a uniform, affine oriented matroid of rank 3 with pseudoline arrangement $L_1, \ldots, L_n$.
		Without loss of generality, assume we have labeled pseudolines $L_1, \ldots, L_n$, with their heads in clockwise order around the outside of the plane.
		An example is illustrated in Figure \ref{F:pseudoline_code} (a). \\
		
		$\scC(\cM, g)$ is a code on $n+ 2 + n^2 +2n = (n+1)(n+2)$ neurons, labeled: \\
		
		\begin{center}
		\begin{longtable}{@{} p{0.3\textwidth}  p{0.65\textwidth} @{}} 
			$a_1,\ldots, a_n$: & Strips corresponding to each pseudoline of $\cM$.\\ 
			$b_\ell, b_r$: & Strips corresponding to two new ``boundary'' pseudolines whose positive quadrant includes all pseudoline intersections.\\ 
			$c_{i,j}$ for all $i,j\in[n]$: & $n$ neurons along each $a_i$ to apply order-forcing.\\ 
			$d_{s,i}$ for all $s\in\{\ell, r\}, i\in[n]$: & $n$ neurons along $b_r$ and $b_\ell$ to apply order-forcing.
		\end{longtable}
		\end{center}
		
		\vspace{5mm}
		
		The codewords of $\scC(\cM, g)$  are as follows: \\
		
		\begin{center}
		\begin{longtable}{@{} p{0.3\textwidth} p{0.65\textwidth} @{}} 
			$b_\ell b_r d_{\ell,1}d_{r,1} $: & Intersection of the two boundary strips.\\ 
			$b_s d_{s,j}$: & Order-forcing along each boundary strip \\ & ($s = \ell,r$, $j = 1, \ldots, n$.)\\ 
			$b_r a_id_{r,i}d_{r,i+1} c_{i,1}$: & Intersection of each pseudoline with {\bf right}
			boundary strip, with order-forcing neurons. ($i = 1, \ldots, n-1$)\\ 
			$b_r a_n d_{r,n}c_{n,1}$: & Intersection of final pseudoline with {\bf right} boundary strip (one less
			order-forcing neuron is required.) \\ 
			$b_\ell  a_{n + 1 -i} d_{\ell,i}d_{\ell,i+1} c_{n+1-i,n}$:  & Intersection of each pseudoline with {\bf left}
			boundary strip plus order-forcing neurons. ($i = 1, \ldots, n-1$.) \\ 
			$b_\ell a_1d_{\ell ,n}c_{1,n}$: & Intersection of first pseudoline with {\bf left} boundary strip.  \\ 
			$a_i c_{i,j}$: & Order-forcing along each pseudoline \\ & ($i = 1, \ldots, n$,
                        $j = 1, \ldots, n$) \\ 
			$a_i a_{j} c_{i,\pi_i(j)}c_{i,\pi_i(j)+1}$  & Pairwise intersections of pseudolines plus order-forcing \\ \hfill $c_{j,\pi_j(i)}c_{j,\pi_j(i)+1} $: & ($i = 1, \ldots, n$, $j = 1, \ldots, n-1$.) \\ 
		\end{longtable}
		\end{center}
		
		\vspace{3mm}
		
		We include an example of a good cover realization of this code in Figure~\ref{F:pseudoline_code}(b). 
	\end{defn}
	
	\begin{figure}
\includegraphics[width=0.9\textwidth]{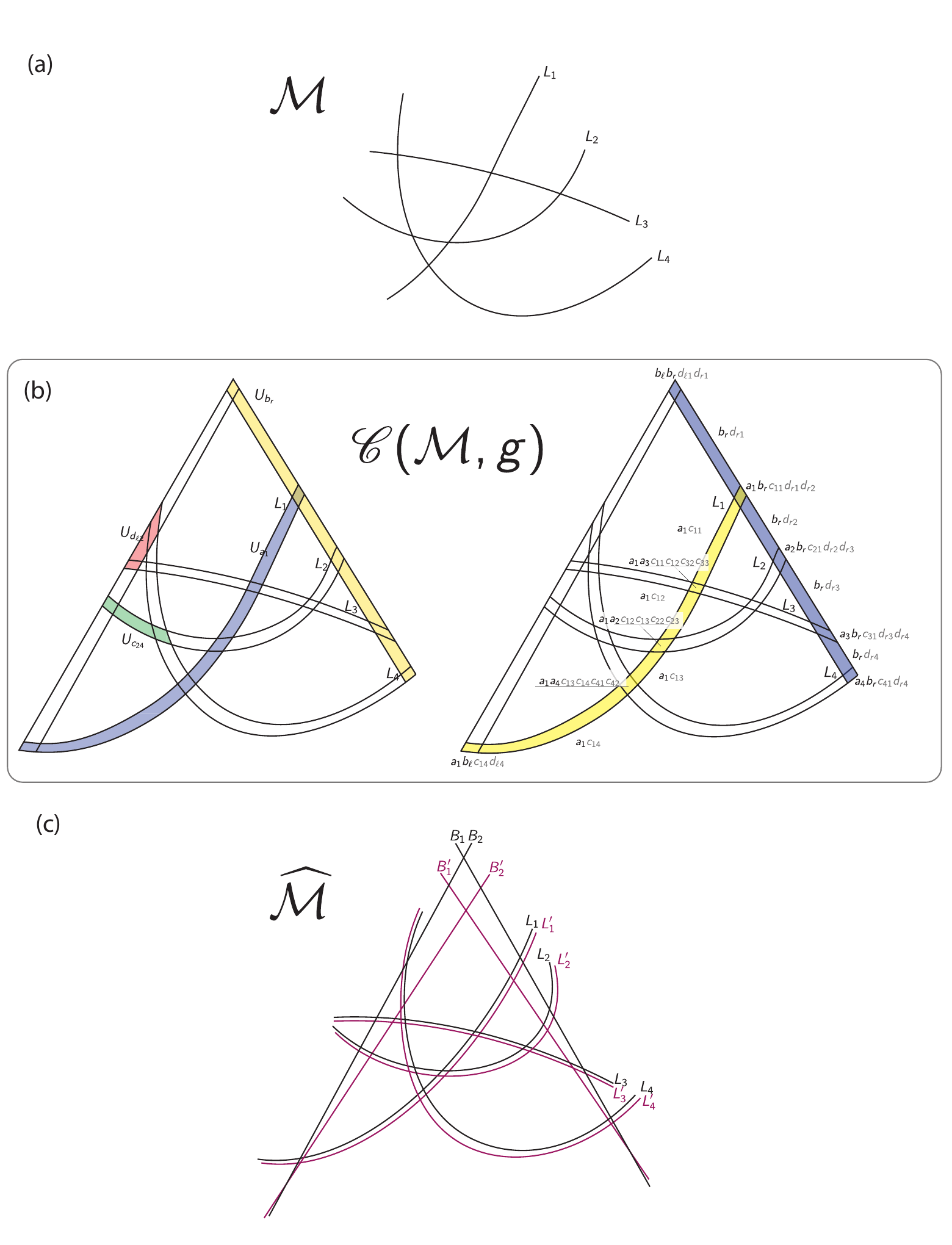}

		\caption{(a) A pseudoline arrangement for the oriented matroid $\cM$. 
		(b) A good-cover realization of the code $\scC(\cM, g)$. Left, representative open sets are shown. Right, select codewords are labeled. 
		(c) A pseudoline arrangement for $\widehat{\cM}$.  }
		\label{F:pseudoline_code}
	\end{figure}

	\begin{prop}\label{prop:img_of_matroid}
	For any uniform, rank 3  affine oriented matroid $(\cM, g)$, there exists a rank 3 oriented matroid $\widehat\cM$ such that $\scC(\cM, g) \leq \sfL^+\widehat\cM$.  
	\end{prop}
	

	\begin{proof}
		We describe the pseudoline arrangement associated to $\widehat\cM$. 
		Fix a piecewise-linear pseudoline arrangement $L_1, \ldots, L_n$ representing $(\cM, g)$ consistent with the labeling in $\scC(\cM, g)$. 
		Let $B_r$ be a line which meets $L_1, L_2, \ldots, L_n$ in the clockwise order consistent with the labeling. 
		Let $B_\ell$ be a line which meets $B_r$ and then $L_n, L_{n-1}, \ldots, L_{1}$ in the opposite of this clockwise order.  
		Orient $B_r$ and $B_\ell$ such that $B_r^+$ and $B_\ell^+$ are the half-spaces containing all bounded cells of the pseudoline arrangement.
		Orient each $L_i$ such that $B_r\cap B_\ell$ lies in $L_i^-$. 
		
		Now, for each $i\in [n]$, we define a pseudoline $L_i'$ which  acts as a translation of $L_i$ into its positive half-space. 
		That is, we let $L_i'$ be a pseudoline which intersects $B_\ell, B_r$,  and each $L_j, j\neq i$ in the same order as $L_i$, and such that for each other pseudoline $L$, the intersections of $L_i$ and $L_i'$ are adjacent along $L$. 
		Further, we ensure that $L_i, L_i'$ do not intersect. 
		Orient $L_i'$ so that $L_i \subseteq L_i'^+$.
		Define $B_\ell', B_r'$ and orient them analogously.
		This pseduoline arrangement is illustrated in Figure~\ref{F:pseudoline_code}(c). 

		
		Finally, we produce an oriented matroid $\widehat \cM$ from this pseudoline arrangement by fixing a ground set element $h$ such that the set of covectors of the pseudoline arrangement is the affine space of $(\widehat \cM, h)$.  We claim the oriented matroid of this arrangement, $\widehat\cM$, lies above $\scC(\cM, g)$.
		The morphism $f$ such that $f(\tk_{\sfL^+\widehat\cM}(h)) = \scC(\cM, g)$ is defined by the the trunks
		\begin{align*}
		\{T_{a_i}\}_{i = 1, \ldots, n} \cup \{T_{b_r}, T_{b_\ell}\} \cup \{T_{c_{i,j}}\}_{i = 1, \ldots, n, j = 1, \ldots, n} \cup \{T_{d_{s,j}}\}_{s=r,\ell, j = 1, \ldots, n}.
		\end{align*}
		corresponding to the neurons of $\scC(\cM, g)$.
		We let $i, i'$ (for each $i\in[n]$), $r, r', \ell,$ and $ \ell'$ be the ground set elements corresponding to $L_i, L_i', B_r, B_r', B_{\ell},$ and $B_{\ell}'$ respectively. 
		These trunks are defined as follows:
		\begin{align*}
		T_{a_i} &:= \tk(\{i, i', r, \ell\}) \mbox{ for  } i = 1, \ldots, n\\
		T_{b_r} &:= \tk(\{r, r', \ell, n'\})\\
		T_{b_\ell} &:= \tk(\{\ell, \ell', r, 1'\})\\
		T_{d_{r,1}} &:= \tk(\{r, r', \ell, 1'\})\\
		T_{d_{r,i}} & := \tk(\{r, r', {i-1}, {i}'\}) \mbox{ for } i = 2, \ldots, n\\
		T_{d_{\ell,1}} &:= \tk(\{\ell, \ell', r, n'\})\\
		T_{d_{\ell,i}} &:= \tk(\{\ell, \ell', {n-i+2}, {n-i+1}'\}) \mbox{ for } i = 2, \ldots, n.
		\end{align*}
		
		In order to define $T_{c_{i,j}}$, we introduce some notation. Let 
		\begin{align*}
		\mathrm{right}(i, j)  = 
		\begin{cases}
		 j \mbox{ if } j < i\\
		 j'  \mbox{ if } j > i
		\end{cases} \qquad
		\mathrm{left}(i, j)  = 
		\begin{cases}
		 j' \mbox{ if } j < i\\
		 j  \mbox{ if } j > i\\
		\end{cases}.
		\end{align*}
		That is, $\mathrm{right}(i, j)$ is whichever of $j, j'$ the pseudoline $L_i$ meets first as we follow it from its intersection with $B_r$ to its intersection with $B_\ell$, and $\mathrm{left}(i,j)$ is whichever it hits second.
		Now, we define
		\begin{align*}
		T_{ c_{i,1}} &:= \tk(\{i, i', r,\mathrm{left}(i, \pi_i(1))\}) \mbox{ for } i = 1, \ldots, n.\\ 
		T_{ c_{i,j}} &:= \tk(\{i, i',  \mathrm{right}(i, \pi_i(j-1)),  \mathrm{left}(i, \pi_i(j))\}) \mbox{ for } i = 1, \ldots, n,\: j = 2, \ldots, n-1.\\ 
		T_{ c_{i,n}} &:= \tk(\{i, i', \mathrm{right}(i, \pi_i(n-1)), \ell \}) \mbox{ for } i = 1, \ldots, n,\: j = 2, \ldots, n-1.\\ 
		\end{align*}
		Finally, we verify that the map $f(\sigma) = \{ s \mid \sigma\in T_s\}$ has image $\scC(\cM, g)$.
		Each $T_s$ specifies the open set $U_s$ given by the
                intersection of the open half-spaces indexed
                by $s$. By construction, the $\{U_s\}$ give rise to a
                good cover realization of $\scC(\cM, g)$.
                
	\end{proof}

	\begin{introtheorem}
	 Let $\cM = (E, \cL)$ be a uniform, rank 3 oriented matroid. Then for $g\in E$, the code $\scC(\cM, g)$ is convex if and only if $\cM$ is representable. \label{prop:cvx_iff_rep}
	 \end{introtheorem}
	 
	\begin{proof}
	First, we show that if $\cM$ is representable, $\scC(\cM, g)$ is convex. Note that by Proposition \ref{prop:img_of_matroid}, we have that $\scC(\cM, g) \leq \sfL^+\widehat\cM$. Also note that by construction, if $\cM$ is representable, then so is $\widehat\cM$. Therefore, by Theorem \ref{thm:polytope_matroid}, if $\cM$ is representable, $\scC(\cM, g)$ is convex.

Next, we show that if $\scC(\cM, g)$ is convex, then $\cM$ is representable. Note that the following sequences are order-forced in $\scC(\cM, g)$. 
 
\begin{enumerate}

 \item The only feasible path from $  b_r   b_\ell   d_{r,1} d_{\ell,1}$ to  $ b_{r} a _n d_{r,n}$ in $G_{\scC(\cM, g)}$ is 
 \begin{align*}  b_r   b_\ell   d_{r,1} d_{\ell 1} \lra     b_r   d_{r,1} \lra   b_r   a _1  d_{r,1} d_{r,2} \lra   b_r   d_{r,2} \lra \cdots \lra  b_{r} d_{r,n} \lra  b_{r} a _n d_{r,n}
 \end{align*}
 
  \item The only feasible path from $  b_r   b_\ell   d_{r,1} d_{\ell,1}$ to  $  b_\ell  a _1 d_{\ell,n}$ in $G_{\scC(\cM, g)}$ is 
  \begin{align*}
    b_r   b_\ell   d_{r,1} d_{\ell, 1} \lra     b_\ell   d_{\ell, 1} \lra   b_\ell   a _n  d_{\ell, 1} d_{\ell, 2} \lra   b_r   d_{\ell, 2} \lra \cdots \lra   b_\ell  d_{\ell, n} \lra   b_\ell  a _1 d_{\ell, n}
  \end{align*}

  \item For each $i$, the only feasible path from $ b_r  a_i  c_{i,1}  d_{r,i} d_{r,i+1}$ to $ a _i   b_\ell   c_{i,n} d_{\ell, (n-i+1)}$ in  $G_{\scC(\cM, g)}$ is 
  \begin{align*} 
    b_r  a _i  c_{i,1}  d_{r,i} d_{r,i+1} 
    &\lra  a_i c_{i,1}
    \lra  a_i  a_{\pi\inv_i(1)}  c_{i,1} c_{i,2}c_{\pi\inv_i(1), \pi_{\pi\inv_i(1)}(i)}c_{\pi\inv_i(1), \pi_{\pi\inv_i(1)}(i)+1} 
    \lra  a _i  c_{i,2} \lra  \\
   \cdots &
   \lra  a _i  c_{i,(n-1)} 
   \lra  a_i  a_{\pi\inv_i(n-1)}  c_{i,(n-1)} c_{i,n}c_{\pi\inv_i((n-1)), \pi_{\pi\inv_i(n-1)}(i)}c_{\pi\inv_i(1), \pi_{\pi\inv_i(n-1)}(i)+1}\lra  \\ 
   \cdots& \lra  a _i  c_{i,n}
   \lra  a _i   b_\ell   c_{i,n} d_{\ell, (n-i+1)}
  \end{align*}
 \end{enumerate}

We claim if $\scC(\cM, g)$ is convex, then it has a realization in the plane.
Suppose that  $\scC(\cM, g)$ is convex, and fix a realization $\cU$ in $\R^d$.  
Choose points $p_1, p_2, p_3$ in the atoms $A_{  b_r   b_\ell   d_{r,1} d_{\ell, 1}}$, $A_{ b_{r} a _n d_{r,n}}$, and $A_{  b_\ell  a _1 d_{\ell, n}}$ respectively. 
We will show that each atom in this realization has a nonempty intersection with $\conv(p_1, p_2, p_3)$. 
By order forcing (1), the line  from $p_1$ to $p_2$ must pass through the atoms of all codewords containing $b_r$ in the listed order. 
Likewise, by order forcing (2), the line from $p_1$ to $p_3$ must pass through the atoms of 
all codewords containing $b_\ell$ in the listed order. 

 
In particular, we have shown that for each $i$, the atoms of $\s = b_ra _id_{r,i}d_{r,(i+1)}c_{i,1}$ and $\t = b_\ell  a _i d_{\ell, (n+1-i)} d_{\ell, (n+2-i)} c_{i,1}$ have a nonempty intersection with $\conv(p_1, p_2, p_3)$. For each $i$, pick a point $q_i \in \conv(p_1, p_2, p_3) \cap A_{\s}$ and a point $r_i \in \conv(p_1, p_2, p_3) \cap A_{\t}$. Applying order forcing (3) for each $i$, we have that the line from $r_i$ to $q_i$ passes through the atoms of  
all codewords containing $a_i$, in the listed order. 
This accounts for every codeword of $\scC(\cM, g)$. Thus, intersecting the open  sets in $\cU$ with the plane $\aff(p_1, p_2, p_3)$ produces a two-dimensional convex realization of $\scC(\cM, g)$. 

Now, we obtain a straight line arrangement for $(\cM, g)$ in this plane by extending the line segment from $q_i$ to $r_i$ to be a line.
By uniformity of $\cM$ and our choice of bounding lines, every pair of pseudolines intersects in $B_\ell^+ \cap B_r^+$ and so no new intersections are introduced.
Notice that by order forcing (3), this line meets the sets $U_{ a _1}, \ldots, U_{ a _n}$ in the order consistent with the pseudoline arrangement. Thus, if this code is convex, 
$\cM$ is representable. 
\end{proof}

Proposition \ref{prop:cvx_iff_rep} demonstrates that matroid representability and
convex code realizability are intertwined. One consequence is that
non-representable oriented matroids are a new source
for constructing non-realizable codes:

\begin{cor}
  There is an infinite family of non-convex codes which lie below
  oriented matroids in $\pcode$.
 \end{cor}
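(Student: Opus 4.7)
The plan is to use Theorem \ref{thm:witness} as a source of non-convex codes and then realize each such code as $\sfL^+$ of some oriented matroid, so that it lies below that matroid (trivially, via the identity) in $\pcode$. Let $\{\cM_n\}_{n \geq 1}$ be an infinite family of pairwise non-isomorphic uniform non-representable oriented matroids; existence of such a family is classical in oriented matroid theory (see for instance \cite[Chapter 8]{bjorner1999oriented}). By Theorem \ref{thm:witness}, each code $\sfL^\pm(\cM_n)$ is non-convex, so the only remaining task is to exhibit, for each $n$, an oriented matroid that $\sfL^\pm(\cM_n)$ lies below.

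The key construction is a parallel extension with reversed orientation. Given $\cM_n$ on ground set $E_n$, form $\cM_n'$ on $E_n \sqcup \bar{E_n}$ by declaring $\bar{e}$ to be a parallel copy of $e$ with opposite sign; concretely, the covectors of $\cM_n'$ are the sign vectors $Y$ on $\pm E_n$ whose restriction $X := Y|_{E_n}$ is a covector of $\cM_n$ and which satisfy $Y_{\bar{e}} = -X_e$ for every $e \in E_n$. A direct check of the covector axioms (V\ref{axiom:emptysetV})--(V\ref{axiom:crossingV}) shows $\cM_n'$ is an oriented matroid, and the positive part of each such covector, viewed as a subset of $\pm E_n$, equals $X^+ \cup X^-$ (after identifying $X^- \subseteq E_n$ with the corresponding subset of $\bar{E_n}$). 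Hence $\sfL^+(\cM_n') = \sfL^\pm(\cM_n)$. Since $\cM_n$ is obtained from $\cM_n'$ by deleting $\bar{E_n}$ and representability is closed under minors, $\cM_n'$ is non-representable.

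Consequently, each $\sfL^\pm(\cM_n) = \sfL^+(\cM_n')$ is a non-convex code which lies below the oriented matroid $\cM_n'$ in $\pcode$ (via the identity morphism). Because the codes $\sfL^\pm(\cM_n)$ have ground sets of different cardinalities $2|E_n|$, they are pairwise non-isomorphic, yielding the desired infinite family. The main technical point is verifying that the parallel-extension-with-opposite-orientation construction produces a genuine oriented matroid; this is standard, as it is the reorientation of the classical parallel extension by negating the added element.
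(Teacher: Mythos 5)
Your proposal is correct and follows essentially the same route as the paper: take infinitely many non-representable uniform oriented matroids, apply Theorem~\ref{thm:witness} to conclude each $\sfL^\pm(\cM_n)$ is non-convex, and realize that code as $\sfL^+$ of the anti-parallel doubled matroid $\cM_n'$, below which it trivially lies in $\pcode$. The extra details you supply (the covector-axiom check for the anti-parallel extension and the non-representability of $\cM_n'$, the latter not even needed for the statement) are fine but do not change the argument.
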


\begin{proof}
There are infinitely many non-representable uniform oriented matroids of rank 3 \cite[Proposition 8.3.1]{bjorner1999oriented}. 
By Proposition~\ref{prop:cvx_iff_rep}, $\scC(\cM, g)$ is non-convex for each of these.
By Proposition \ref{prop:img_of_matroid}, $\scC(\cM, g) \leq \sfL^+\widehat\cM$. 
\end{proof}

\begin{ex} Let $(\cM, g)$ be the uniform non-Pappus matroid from \cite{shor1991stretchability}, whose pseudoline arrangement appears in Figure~\ref{fig:non-papp}. This matroid is non-representable, since a realization of it would violate Pappus's hexagon theorem. Then $\scC(\cM, g)$ is a non-convex code with no local obstructions. 

\begin{figure}[ht!]
\includegraphics[width = 3.5 in]{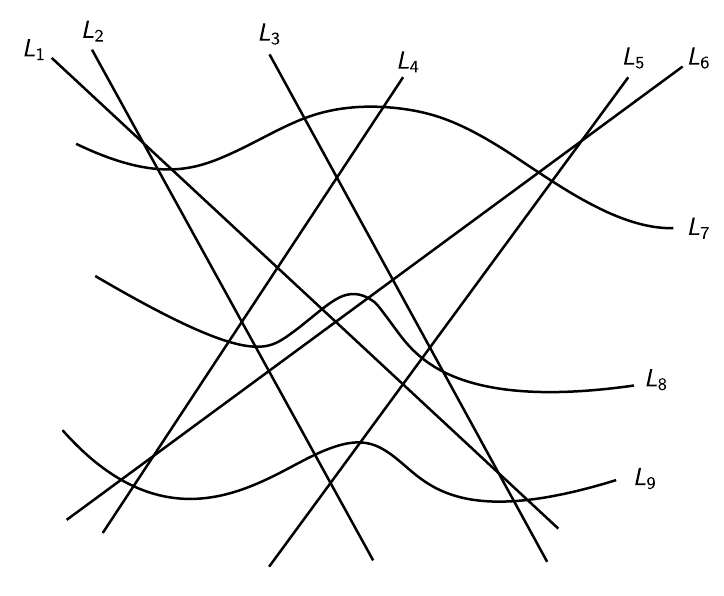}
\caption{The pseudoline arrangement for the uniform non-Pappus matroid. The pairs of lines 23, 45, 78, 79, and 89 are taken to intersect outside of the figure. }
\label{fig:non-papp}. 
\end{figure}

\end{ex}

\subsection{The convex code decision problem is NP-hard}

	We now turn to the computational aspects of convex codes.
	Using the relationship between convex codes and representable oriented matroids (Theorem~\ref{thm:witness}), we demonstrate the convex code decision problem is NP-hard and $\exists\R$-hard, though it remains open whether the convex code decision problem lies in either of these classes, or is even decidable.
	The complexity class $\exists \R$, read as \emph{the existential theory of the reals}, is the class of decision problems 
	of the form 
		\[ \exists (x_1\in \R) \ldots \exists(x_n\in \R) P(x_1, \ldots, x_n), \]
	where $P$ is a quantifier-free formula whose atomic formulas are polynomial equations, inequations, and inequalities in the $x_i$.  In other words, a problem in $\exists \R$ defines a semialgebraic set over the real numbers and asks whether or not it contains any points \cite{broglia2011lectures}. Many well known problems in computational geometry lie in $\exists \R$, including some problems very similar to determining whether a code is convex. For instance, determining whether a graph is the intersection graph of convex sets in the plane is $\exists \R$ complete \cite{schaefer2009complexity}. 
	
Theorem~\ref{thm:witness} implies the convex code decision problem is at least as difficult as deciding if an oriented matroid is representable.
This decision problem is $\exists\R$-complete \cite{mnev1988universality, shor1991stretchability, sturmfels1987decidability, richter1999universality}: given any decision problem in $\exists \R$, there is a polynomial time algorithm to produce an oriented matroid (presented in terms of covectors) which is realizable if and only if the decision problem has a positive answer. 
 Therefore the convex code decision problem is $\exists\R$-hard.
	
\begin{introtheorem}
	Any problem in $\exists \R$ can be reduced in polynomial time to the problem of determining whether a neural code is convex. 
\end{introtheorem}
	
\begin{proof}

By the Mn\"ev-Sturmfels universality theorem (see \cite{mnev1988universality, sturmfels1987decidability, shor1991stretchability, bjorner1999oriented,  richter1999universality}), determining whether a rank 3 uniform oriented matroid (presented in terms of covectors) is representable is complete for the existential theory of the reals.
By Proposition~\ref{prop:cvx_iff_rep}, a rank 3 uniform oriented matroid $\cM$ is representable if and only if $\scC(\cM, g)$ is a convex neural code. Further, the number of neurons in $\scC(\cM, g)$  is quadratic in the size of the ground set of $\cM$, and the number of codewords of $\scC(\cM, g)$ is less than the number of covectors of $\cM$. Finally, we can construct $\scC(\cM, g)$  from the covectors of $\cM$ in polynomial time. Any problem in $\exists\R$ can be reduced in polynomial time to deciding representability of a uniform oriented matroid and thus convexity of the corresponding code.
\end{proof}

\noindent 
Since any $\exists \R$ complete problem is also $\mathrm{NP}$-hard, we have as a corollary that determining whether a code is convex is $\mathrm{NP}$-hard.
 \begin{cor}
The problem of determining whether a code is convex is $\mathrm{NP}$-hard, where the problem size is measured in the number of codewords. 
\end{cor}
Notice that because we can perform this reduction of a problem in $\exists \R$ to a neural code in polynomial time, this result holds even when we measure the problem size in terms of the number of codewords, which may be exponentially large in the number of neurons.  Again, this NP hardness result is not surprising. For instance, it parallels the result that recognizing whether a simplicial complex is the nerve of convex sets in $\R^d$ is $\mathrm{NP}$-hard for $d\geq 2$ \cite{tancer2010d}.

\section{Categories of codes, matroids, and rings}\label{sec:algebra}

\subsection{The Neural Ring}\label{S:neuralring}
	To set the stage for the functorial connections between combinatorial codes and oriented matroids, we begin with a brief discussion of the functor $\sfR:\Code\to\NRing$ defined in \cite{jeffs2019morphisms}, and its relation to the combinatorial relations of a code, introduced in \cite{curto2013neural}.
	Recall that the neural ring of a code $\scC$ is $R_\scC = \F_2[x_1,\dots,x_n] / I_\scC$, where $I_\scC$ is the vanishing ideal of $\scC$ as a variety in $\F_2^n$.
	This is the ring of $\F_2$-valued functions on $\scC$ with distinguished coordinate functions $x_1,\dots,x_n$, that is, $x_i(\s) = 1$ iff $i \in \s$.
	The category $\NRing$ is the category of neural rings together with \emph{monomial maps}, ring homomorphisms $\phi:R_\scD \to R_\scC$ which map the coordinate functions of $R_\scD$ either to products of coordinate functions in $R_\scC$ or to $0$.
	By restricting to this class of homomorphisms, the functor $\sfR$ which takes a code to its neural ring is a contravariant equivalence of categories \cite[Theorem 1.6]{jeffs2019morphisms}.
	For $f: \scC \to \scD$ a morphism of codes defined by trunks $T_i = \tk_\scC(\s_i)$ for $i \in [m]$, the ring homomorphism $\sfR f: R_\scD \to R_\scC$ sends the coordinate function $x_i$ in $R_\scD$ to the product $x^{\s_i}$ in $R_\scC$.
	
	The pseudo-monomials in $I_\scC$ provide a dual description of $\scC$.
	They record the dependencies among the elements of $[n]$, or, equivalently, among the sets $U_i$ in any realization of $\scC$.
	If $\scC = \code(\cU,X)$, then \cite[Lemma 4.2]{curto2013neural} implies:
		\begin{align}
			x^\s(1-x)^\t \in I_\scC \iff \bigcap_{i\in\s} U_i \subseteq \bigcup_{j\in\t} U_j.\label{eq:comborelation}
		\end{align}
	Containment relationships as in the right hand side of \eqref{eq:comborelation} are called the \emph{combinatorial relations} of $\scC$.
	As a generating set for $I_\scC$, the minimal pseudo-monomials, i.e.\ the minimal combinatorial relations, are sufficient to recover the code $\scC$.
	The minimal proper pseudo-monomials in $I_\scC$ are called the \emph{canonical form} of $\scC$ \cite{curto2013neural}.
	The following lemma shows that the structure of a pseudo-monomial ideal encodes the weak elimination axiom (axiom (C\ref{axiom:weakelimC})) of oriented matroid circuits.

	\begin{lem}\label{L:CFcircuitaxioms}
		Let $\scC = 2^{[n]}$ be a combinatorial code.
		Denoting
		pseudo-monomials $x^\s(1-x)^\t$
		as sets $\s \cup \bar\tau \subseteq \pm[n]$, the minimal relations of $\scC$ satisfy circuit axiom (C\ref{axiom:weakelimC}) (weak elimination). 
	\end{lem}
	
	\begin{proof}
		
		Suppose $p_1 = x^\s(1-x)^\t$ and $p_2 = x^\a(1-x)^\b$ are minimal in $I_\scC$, with $e \in \s\cap\b$.
		Then
			\begin{align*}
				x^{\a\setminus\s}(1-x)^{\b\setminus(\t\cup e)} p_1 + x^{\s\setminus(\a\cup e)}(1-x)^{\t\setminus\b} p_2 = x^{\sigma \cup \alpha \setminus e}(1-x)^{\tau \cup \beta \setminus e} \in I_\scC.
			\end{align*}
		Thus, some minimal pseudo-monomial $x^{Z^+}(1-x)^{Z^-}$ in $I_\scC$ divides $x^{\sigma \cup \alpha \setminus e}(1-x)^{\tau \cup \beta \setminus e}$, i.e.\ $Z^+ \subseteq (\s\cup\a) \setminus e$ and $Z^- \subseteq (\t\cup\b) \setminus e$, which is exactly circuit axiom (C\ref{axiom:weakelimC}).
	\end{proof}
	
	Note that, while the \emph{proper} circuits of an oriented matroid satisfy axiom (C\ref{axiom:weakelimC}), we must include improper pseudo-monomials of the form $x_i(1-x_i)$ in order for the generators of $I_\scC$ to satisfy  (C\ref{axiom:weakelimC}).  While elements of the canonical form are \emph{minimal} combinatorial relations, they do not satisfy axiom (C\ref{axiom:incomparableC}) (incomparability). Combinatorial relations on the same support need not be equal or opposite: for instance, the combinatorial relations of the code $\scC = \{\varnothing, 1, 2, 3, 123\}$ are $U_1\cap U_2 \subseteq U_3, U_2\cap U_3 \subseteq U_1$, and $U_1 \cap U_3 \subseteq U_2$, which are all supported on the set $\{1,2,3\}$.

	The relationship between pseudo-monomials in $I_\scC$ and codewords in $\scC$ is analogous to the relationship between circuits and topes.
	In light of \cref{L:CFcircuitaxioms}, the oriented matroid analogue of $\sfR$ maps an oriented matroid $\cM$ to an ideal generated by the circuits of $\cM$ and then the depolarization map $\sfD$ is simply the algebraic analogue of $\sfW^+$.
	As we will see, most of the work involved in establishing these connections is in showing $\sfW^+$ and $\sfS$ are functors.

\subsection{Oriented matroids to neural codes}\label{sec:functorW}
We now show that the map $\sfW^+$ is a contravariant functor from the category $\OMatroid$ whose objects are acyclic oriented matroids and whose morphisms are strong maps, to the category $\Code$ whose objects are neural codes and whose objects are code morphisms.

We define strong maps in terms of convexity following \cite{hochstattler1999linear}. First, we include the requisite information on convexity for oriented matroids.

\begin{defn}
	A subset $S \subseteq \pm E$ is \emph{convex} in $\cM$ if for all $x \notin S$, there is no circuit $C \in \cC(\cM)$ such that $-x \in C \subseteq S \cup \{-x\}$.
	The \emph{convex closure} of a set $S \subseteq \pm E$ is the intersection of all convex sets that contain $S$.
\end{defn}

\begin{rmk} This definition differs from \cite[Exercise 3.9, p 152]{bjorner1999oriented}
  in that it acts on subsets of $\pm E$ rather than $E$. The intuition behind this definition
comes from vector arrangements.
A signed-linear dependence on $S\cup \{-x\}$ gives a signed-linear
representation of $x$ in terms of $S$: just add $x$ to both sides of the equation.
This can be rescaled to a convex combination of elements in $S$; therefore, $x$ should be in the 
convex closure.
\end{rmk}

We now define strong maps:

	\begin{defn}
		Let $\cM_1, \cM_2$ be a pair of oriented matroids on ground sets $E_1, E_2$ and $\uf: E_1\cup \{\lop\}\to E_2\cup \{\lop\}$ such that $\uf(\lop) = \lop$. Extend $\uf$ to a map $f$ on the signed ground sets by $f(-e) = -f(e)$, where $\lop = -\lop$. We say that $f$ induces a \emph{strong map} $\phi_f: \cM_1\to \cM_2$ if whenever
	$S \subseteq \pm E_2$ is a convex set of $\cM_2$, 
$f^{-1}(S) \subseteq \pm E_1$ is a convex set of $\cM_1$. 

	\end{defn}

        \begin{rmk}
          We briefly explain the loops in the definition of strong maps. We want the function on
          sets to be well-defined while still allowing some elements to ``disappear,'' so we add
          $\{\circ\}$ in the target to absorb the disappearing elements. Since strong maps
          between matroids on the same ground set have certain duality properties,
          we include $\{\circ\}$ in the source as well.
          \end{rmk}

The following lemma gives us an equivalent definition of convexity in terms of topes.
We will make use of a corollary (\cref{cor:topeconvex}) along the way to proving $\sfW^+$ is a functor.
	
	\begin{lem}\label{prop:topeconvex}
		A subset $S \subseteq \pm E$ is convex if and only if for all $x \notin S$ and
		$A \subseteq S$ containing no signed circuits, there exists a tope $X \in \cW(\cM)$ such that
		$A \cup \{-x\} \subseteq X$.
		\end{lem}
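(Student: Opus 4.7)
The plan is to exploit a standard extension property for oriented matroids: a signed subset $T \subseteq \pm E$ (with $T^+ \cap T^- = \emptyset$) is contained in some tope of $\cM$ if and only if $T$ contains no proper circuit as a subset. This is a well-known consequence of weak elimination (C\ref{axiom:weakelimC}) combined with composition, and can be cited from \cite[Chapter 3]{bjorner1999oriented}. Given this extension property, both directions of the lemma reduce to logical manipulations of the definition of convexity.

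For the backward direction ($\Leftarrow$), I would argue contrapositively. Suppose $S$ is not convex, so that there exist $x \notin S$ and a circuit $C \in \cC(\cM)$ with $-x \in C \subseteq S \cup \{-x\}$. Set $A := C \setminus \{-x\} \subseteq S$. Because $C$ is a proper circuit, $x \notin C$ and hence $\underline{A} = \underline{C} \setminus \{x\} \subsetneq \underline{C}$. By the incomparability axiom (C\ref{axiom:incomparableC}), no circuit $C'$ can lie in $A$: such a $C'$ would satisfy $\underline{C'} \subseteq \underline{A} \subsetneq \underline{C}$, forcing $C' = \pm C$, which is impossible. So $A$ satisfies the hypothesis, producing a tope $X \supseteq A \cup \{-x\} = C$. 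But topes are covectors, so the circuit $C$ (which is a vector) must be orthogonal to $X$; since $C \subseteq X$ and $\underline{C} \neq \emptyset$ (by (C\ref{axiom:emptysetC})), this forces $C_e C_f = X_e X_f$ for all $e,f \in \underline{C}$, contradicting orthogonality.

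For the forward direction ($\Rightarrow$), fix $x \notin S$ and $A \subseteq S$ with no signed circuit. I would first verify that $T := A \cup \{-x\}$ also contains no signed circuit: if $C \subseteq T$ were one, then $C \not\subseteq A$ would force $-x \in C$, giving $-x \in C \subseteq S \cup \{-x\}$ and contradicting convexity of $S$. The extension property then delivers a covector $Y \supseteq T$, and composing with any fixed tope $W$ of $\cM$ yields a tope $Y \circ W \supseteq T$ via axiom (V\ref{axiom:compositionV}).

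The main obstacle is invoking the extension property cleanly; everything else is bookkeeping with the circuit axioms, the orthogonality of vectors and covectors, and the definition of convexity. A secondary subtlety is checking that $T$ is a genuine signed set (i.e., does not contain both $e$ and $\bar{e}$), which is automatic once ``no signed circuit'' is read to exclude the improper circuits $\{e,\bar e\}$; this excludes $e$ and $\bar e$ from simultaneously lying in $A$, hence in $T$.
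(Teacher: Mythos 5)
Your proposal is correct and takes essentially the same route as the paper: the backward direction is the identical argument (take $A = C\setminus\{-x\}$, rule out circuits in $A$ via incomparability, and contradict tope--circuit orthogonality), and the ``extension property'' you cite---a circuit-free signed set is contained in some tope---is exactly the content the paper extracts from the four-painting theorem \cite[Theorem 3.4.4]{bjorner1999oriented} in its forward direction. The only difference is packaging: you invoke that Farkas-type alternative as a citable black box (your aside that it follows from weak elimination plus composition is glib, but the reference carries it), while the paper applies the painting axioms directly.
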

	\begin{proof}
		Assume that for all $x \notin S$ and $A \subseteq S$ containing no signed circuits, there is a tope $X$ with $A \cup \{-x\} \subseteq X$.

		Suppose that $S$ is not convex, by way of contradiction. Then there exists some $x \notin S$
		for which there is a circuit $C$ with $-x \in C \subseteq S \cup \{-x\}$. But,
		$A = C\setminus \{-x\}$ is a subset of $S$ containing no signed circuits (by axiom (C\ref{axiom:incomparableC})), and if any tope contained
		$A \cup \{-x\}$, that would contradict tope-circuit orthogonality.

		For the reverse implication, we prove the contrapositive using the four-painting axioms \cite[Theorem 3.4.4 (4P)]{bjorner1999oriented}. Suppose that there is some set $A \subseteq S$
	 containing no signed circuits and an element $x \notin S$ such that $A \cup \{-x\}$ is
		not contained in any tope. Paint
		the ground set to be black and white coincident with $A \cup \{-x\}$, and to be red on the
		remaining elements. By the four-painting axioms, there must be a circuit
		supported on the elements of $A \cup \{-x\}$; this proves that $S$ is not convex.
\end{proof}

	\begin{cor}\label{cor:topeconvex}
		Every tope of a loopless matroid is convex.
	\end{cor}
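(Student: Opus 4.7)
The plan is to apply Lemma~\ref{prop:topeconvex} with $S$ taken to be the tope $T$ itself. The single ingredient I need beyond the lemma is the standard fact that, in a loopless oriented matroid, every tope has full support, meaning that for each $e \in E$ exactly one of $e$ and $-e$ lies in $T$. I would verify this by a composition argument: if some $e \in E$ had $T_e = 0$, then because $e$ is not a loop there is a covector $Y$ with $Y_e \neq 0$, and the covector $T \circ Y$, guaranteed by axiom (V\ref{axiom:compositionV}), would strictly contain $T$ as a signed set, contradicting maximality of $T$.

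Given full support, the convexity criterion collapses. For any $x \notin T$, fullness forces $-x \in T$, so for every subset $A \subseteq T$ containing no signed circuit we automatically have $A \cup \{-x\} \subseteq T$. Taking $X := T$ then supplies the tope demanded by Lemma~\ref{prop:topeconvex}, and we conclude that $T$ is convex.

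There is really no obstacle here---the corollary is a one-step bookkeeping consequence of Lemma~\ref{prop:topeconvex} once the full-support property of topes is in hand. If I preferred a self-contained proof rather than citing full support as standard, I would simply fold its three-line composition argument directly into the main body of the proof.
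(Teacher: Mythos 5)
Your proof is correct and follows the same route as the paper: both apply the tope characterization of convexity (Lemma~\ref{prop:topeconvex}) to $S = T$, use that topes of a loopless matroid have full support so $x \notin T$ forces $-x \in T$, and then take $T$ itself as the witnessing tope. The only difference is that you spell out the full-support fact via a composition/maximality argument, which the paper simply cites as known.
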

	\begin{proof}
		Let $X$ be a tope. By tope-circuit orthogonality,
		there is no circuit contained in $X$. Consider $x \notin X$.
		Since topes have full support, $x \notin X$ implies $-x \in X$. This means that for
		any $A \subseteq X$, the set $A \cup \{-x\} \subseteq X$, which is a tope. Therefore $X$ is convex.
	\end{proof}

	Now we define the contravariant functor $\sfW^+: \OMatroid \to \Code$.
	We restate the map on objects and add the action on morphisms.

\begin{defn}

	Let $\cM$ be an acyclic oriented matroid. 
	Take $\sfW^+ \cM$ to be the code consisting of the positive parts of topes of $\cM$,
	\[ \sfW^+ \cM = \{ W^+ \mid W \in \cW(\cM)\} \subseteq 2^E. \]

	Let $\phi_f: \cM_1\to \cM_2$ be a strong map with associated set map
 $\underline{f}:E_1\cup \{\lop\} \to E_2 \cup \{\lop\}$.
	Then, take $\sfW^+ \phi_f: \sfW^+ \cM_2 \to \sfW^+ \cM_1$, to be the map on codewords
	$(\sfW^+ \phi_f)(\sigma) = \underline{f}^{-1}(\sigma)$.

	\end{defn}


In order to prove that $\sfW^+$ is a functor, we must prove that $\sfW^+ \phi$ is actually
a well-defined function with the desired domain.
At this point, acyclicity becomes necessary.

\begin{ex}\label{ex:acyclicnecessary}
	Consider the matroid $\cM_1$ on ground set $E = \{1,2,3\}$ defined by the columns
	of the matrix \[
	\left[ \begin{array}{cccc}
		1 & -1 & 0 \\
		0 & 0 & 1 \\
		\end{array}
		\right]
	\]
	The topes of $\cM_1$ are $ \{1\bar 2 3,1 \bar 2 \bar 3, \bar 1 23,\bar 1 2\bar 3\}$.
	Let $\cM_2$ be the rank-1 matroid on one element obtained by contracting
	the first two columns of $\cM_1$.  That is, $\cM_2$ is the oriented matroid on ground set $[1]$ with topes $\{\bar 1, 1\}$. The contraction is the strong map induced by the set map $\uf(1) =\uf(2) = \lop$, $\uf(3) = 1$. 

	Passing to $\Code$, we have $\sfW^+\cM_2 = \{ \varnothing, 1\}$ and
	$\sfW^+ \cM_1 = \{1, 13,2, 23\}$.
	For the functor to work, we would need $\sfW^+\phi(1) = 3$ to be the positive part
	of some tope, but there is no such tope. By demanding that the matroids are
	acyclic we avoid this problem. Acyclic oriented matroids are also loopless,
        so topes of acyclic oriented matroids have full support.
\end{ex}

\begin{prop}
	Let $\cM_1$ and $\cM_2$ be acyclic oriented matroids on $E_1$ and $E_2$ respectively,
	and $\phi_f: \cM_1 \to \cM_2$ a strong map induced by $\uf : E_1\cup \circ \to E_2\cup\circ$. 
	If $X \in \cW(\cM_2)$ is a tope, there is a tope $Z \in \cW(\cM_1)$ such that
	$\uf^{-1}(X^+) = Z^+$.

	\label{prop:welldefined}
	\end{prop}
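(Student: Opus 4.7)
The tope $Z$ we want is uniquely determined: since $\cM_1$ is acyclic (hence loopless), any tope of $\cM_1$ has full support, so the requirement $Z^+=\uf^{-1}(X^+)$ forces $Z^-=\uf^{-1}(X^-)\cup E_0$, where $E_0:=\{e\in E_1:\uf(e)=\lop\}$. Writing $S:=f^{-1}(X)\subseteq\pm E_1$, the candidate is $Z:=S\cup(-E_0)=S\circ(-E_1)$, a signed set of full support on $E_1$. By tope--circuit orthogonality together with the $\pm$-symmetry of the circuit axioms, a signed set of full support in a loopless oriented matroid is a tope exactly when no signed circuit is contained in it, so my plan is to rule out such a circuit in $Z$.

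I would begin by recording two preliminary facts. First, by \cref{cor:topeconvex} the tope $X$ is convex in $\cM_2$, so the strong-map hypothesis gives that $S=f^{-1}(X)$ is convex in $\cM_1$, and $S$ is a signed set since $X$ is. Second, a convex signed set contains no signed circuit: if $C\subseteq S$ were one, then picking any $c\in C$ and setting $x:=-c$ gives $x\notin S$ with $-x=c\in C\subseteq S\cup\{-x\}$, contradicting the definition of convexity. Suppose now, for contradiction, that $C\subseteq Z$ is a signed circuit of $\cM_1$. The inclusions $C^+\subseteq Z^+=\uf^{-1}(X^+)$ and $C^-\subseteq Z^-=\uf^{-1}(X^-)\cup E_0$ force $C^+\cap E_0=\emptyset$, and a direct sign-by-sign check shows that $f(C)\setminus\{\lop\}$ is a signed subset of $\pm E_2$ contained in the tope $X$.

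The decisive step---and the one I expect to be the main technical obstacle---is the claim that $f(C)\setminus\{\lop\}$ is a \emph{vector} of $\cM_2$. This is the circuit-to-vector side of the equivalence of definitions of strong maps of oriented matroids (see \cite{hochstattler1999linear}); in the convexity formulation used here it can be extracted by applying the two preliminary facts to the pullback $f^{-1}(Y)$ of each tope $Y$ of $\cM_2$, each of which is a convex signed set and therefore contains no circuit. With the vector claim in hand, tope--vector orthogonality (a vector whose signs agree with those of a tope on its support must vanish) forces $f(C)\setminus\{\lop\}=\emptyset$, so $\underline C\subseteq E_0$; combined with $C^+\cap E_0=\emptyset$ this gives $C^+=\emptyset$, whence $-C$ is a positive circuit of $\cM_1$ supported on $E_0$, contradicting the acyclicity of $\cM_1$. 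Hence no circuit sits inside $Z$, and $Z\in\cW(\cM_1)$ has $Z^+=\uf^{-1}(X^+)$ by construction.
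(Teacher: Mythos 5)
Your overall architecture is fine and genuinely different from the paper's: you reduce to showing that the full-support signed set $Z=f^{-1}(X)\cup\bigl(-\uf^{-1}(\lop)\bigr)$ contains no circuit, and your two preliminary facts are correct; they already dispose of any circuit $C\subseteq Z$ that avoids $E_0:=\uf^{-1}(\lop)$, since such a $C$ would sit inside the convex signed set $f^{-1}(X)$. The genuine gap is exactly at your ``decisive step.'' You propose to get ``$f(C)\setminus\{\lop\}$ is a vector of $\cM_2$'' from the fact that each tope preimage $f^{-1}(Y)$, $Y\in\cW(\cM_2)$, is convex and hence circuit-free. But the only circuits left to exclude are those meeting $-E_0$, and such a $C$ is \emph{not} contained in $f^{-1}(Y)$: precisely its elements mapping to $\lop$ are absent from every tope preimage, so circuit-freeness (or even convexity, beyond the case where $C$ meets $-E_0$ in a single element) of these preimages produces no contradiction. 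Moreover, the premise you extract is strictly weaker than the strong-map hypothesis and does not imply the vector claim: take $\cM_1$ to be two parallel elements with circuit $C=\{1,\bar 2\}$, $\cM_2$ a single non-loop element, and $\uf(1)=1$, $\uf(2)=\lop$; every tope preimage is circuit-free, yet $f(C)\setminus\{\lop\}=\{1\}$ lies in a tope of $\cM_2$ and is not a vector. (Of course this $\uf$ is not a strong map -- which is the point: any valid proof of the vector claim must use more of the hypothesis than what your sketch invokes.)

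The vector claim is plausibly true for strong maps, but establishing it requires applying the strong-map property to sets whose preimages actually contain the $E_0$-part of $C$ -- for instance a convex-closure argument in the spirit of the paper's proof of \cref{P:Ifunctor}, or the paper's own route for this proposition: pull back the convex set $X\cup\{\lop\}$, whose preimage contains $\pm E_0$ and hence all of $C$; acyclicity of $\cM_1$ then forces $C$ to contain a positive element $x\in\uf^{-1}(X^+)$, and since $-x$ lies outside that preimage, $C$ violates its convexity directly. Until you supply an argument of this kind (or a precise citation proving that strong maps send circuits to vectors after deleting $\lop$), the key step is unproven, and it is exactly the step where all of the difficulty of the proposition is concentrated.
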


\begin{proof}
	Since both matroids are loopless, topes of each have full support on their ground sets.
	By \cref{cor:topeconvex}, $X \cup \{\lop\}$ is convex, and since $f$ is a strong map, we
	conclude that $f^{-1}(X \cup \{\lop\}) = \uf^{-1}(X^+)^+ \sqcup \uf^{-1}(X^-)^- \sqcup \pm \uf^{-1}(\lop)$
	is convex.
	We claim that omitting the positive-signed elements of $f^{-1}(\lop)$ to obtain
	$Z := \uf^{-1}(X^+)^+ \sqcup \uf^{-1}(X^- \cup \{\lop\})^-$ retains convexity.

	If not, then by nonconvexity of $Z$, there is $x \notin Z$,
		such that $-x$ is
        in some circuit $C$ contained in $Z \cup \{-x\}$. Thus,
        $x$ would be an element in the convex closure of $Z$ but not in $Z$ itself,
        rendering $Z$ nonconvex. 
	Because $Z$ has full support, this means $-x \in Z $, implying $ C \subseteq Z$. Because
	$\cM_1$ is acyclic, $C$ must have at least one element $x \in \uf^{-1}(X^+)^+$.
        But this implies
	that $-x$ should be in the convex closure of $f^{-1}(X \cup \{\lop\})$,
        contradicting convexity.

	Finally, by \cref{cor:topeconvex}, we note that a maximal signed convex set must be a tope,
	indicating that $Z$ is a tope satisfying our constraints.
	\end{proof}

Thus, \cref{prop:welldefined} confirms that the map of codes has
the desired domain, so it is well-defined as a map of sets. We need to confirm that
this set map is also a morphism of codes (i.e.\ the preimage of a trunk is a trunk).

\begin{prop}\label{P:strongmaptomorphism}
	For a strong map of acyclic oriented matroids	$\phi_f:\cM_1 \to \cM_2$,
	the map of neural codes given by $\sfW^+ \phi_f(\sigma) = \uf^{-1}(\sigma)$ is a morphism of
	codes.
\end{prop}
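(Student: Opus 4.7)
The plan is to verify the defining condition for a code morphism directly: that the preimage under $\sfW^+\phi_f$ of every trunk in $\sfW^+\cM_1$ is a trunk in $\sfW^+\cM_2$. By the trunk-based description of morphisms recalled after \cite[Proposition 2.12]{jeffs2019morphisms}, this is the whole job; there is no additional data to check once \cref{prop:welldefined} has been used to guarantee that $\sfW^+\phi_f$ actually lands in $\sfW^+\cM_1$.

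So fix a trunk $T = \tk_{\sfW^+\cM_1}(\tau)$ with $\tau \subseteq E_1$. Unpacking definitions, a codeword $\sigma \in \sfW^+\cM_2$ lies in $(\sfW^+\phi_f)^{-1}(T)$ exactly when $\tau \subseteq \uf^{-1}(\sigma)$, equivalently when $\uf(e) \in \sigma$ for every $e \in \tau$. I would now split on the behavior of $\uf$ on $\tau$: either some element of $\tau$ maps to the basepoint $\lop$, or $\uf(\tau) \subseteq E_2$.

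In the first case, no $\sigma \subseteq E_2$ can satisfy the containment (since $\lop \notin \sigma$), so the preimage is empty; empty sets are trunks by convention. In the second case, the condition $\uf(e) \in \sigma$ for all $e \in \tau$ is the same as $\uf(\tau) \subseteq \sigma$, giving
\[
(\sfW^+\phi_f)^{-1}(T) \;=\; \{\sigma \in \sfW^+\cM_2 \mid \uf(\tau) \subseteq \sigma\} \;=\; \tk_{\sfW^+\cM_2}(\uf(\tau)),
\]
which is a trunk. In either case the preimage is a trunk, completing the proof.

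The argument is largely a bookkeeping check, and I do not anticipate a real obstacle: all the substantive work — showing $\uf^{-1}(\sigma)$ is actually realized as the positive part of some tope in $\cM_1$ — was already handled via the strong-map/convexity machinery in \cref{prop:welldefined}. The only minor subtlety worth flagging is the role of the basepoint $\lop$: elements of $\tau$ lying in $\uf^{-1}(\lop)$ must be treated separately, which is precisely why the empty-trunk convention is needed.
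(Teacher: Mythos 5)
Your proof is correct and follows essentially the same route as the paper's: a direct unpacking of the trunk condition showing $(\sfW^+\phi_f)^{-1}\bigl(\tk_{\sfW^+\cM_1}(\tau)\bigr)$ is either empty or $\tk_{\sfW^+\cM_2}(\uf(\tau))$, with \cref{prop:welldefined} supplying well-definedness. The only (minor) difference is that the paper reduces to simple trunks before computing, while you treat a general trunk directly and explicitly handle the case where some element of $\tau$ maps to the basepoint $\lop$ --- a case the paper's proof glosses over, so your version is if anything slightly more complete.
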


\begin{proof}
Let $\scC_i = \sfW^+\cM_i$ for $i = 1,2$.
It is sufficient to check that the preimage of a simple trunk (i.e.\ the trunk of a singleton set) is a trunk. Thus, we compute $(\sfW^+\phi_f)\inv\tk_{\scC_1}(i)$. Let $\tau\in	(\sfW^+\phi_f)\inv\tk_{\scC_1}(i)$, so $(\sfW^+\phi_f)(\tau) \in \tk_{\scC_1}(i)$. By our definition of $\sfW^+\phi_f$, this is equivalent to the condition $f\inv(\tau)\in \tk_{\scC_1}(i)$. By the definition of a trunk, this is equivalent to $i\in f\inv(\tau)$, or $f(i)\in \tau$. Thus, $\tau\in \tk_{\scC_2}(f(i))$ if and only if $\tau\in	 (\sfW^+\phi_f)\inv\tk_{\scC_1}(i)$. Therefore
	\[ (\sfW^+\phi_f)\inv\tk_{\scC_1}(i) = \tk_{\scC_2}(f(i)). \]
Thus, the map $\sfW^+\phi_f$ is a morphism of neural codes. 
\end{proof}

To finish off the proof that $\sfW^+$ is a functor, we need only check that
it respects the identity morphism and composition of morphisms.

\begin{prop}
	The identity strong map on a matroid $\operatorname{id}:\cM \to \cM$ yields
	$\sfW^+ \operatorname{id} : \sfW^+ \cM \to \sfW^+ \cM$ the identity on the corresponding code.

	Given two strong maps $\phi : \cM_1 \to \cM_2$ and $\psi: \cM_2 \to \cM_3$,
	the morphisms $\sfW^+(\psi \circ \phi)$ and $\sfW^+\phi \circ \sfW^+\psi$ from
	$\sfW^+ \cM_3 \to \sfW^+ \cM_1$ are equal.
\end{prop}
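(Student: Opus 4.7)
The plan is to finish verifying the two functoriality axioms by direct calculation, leveraging the set-theoretic definition $\sfW^+ \phi_f(\sigma) = \uf^{-1}(\sigma)$ together with the well-definedness already established in \cref{prop:welldefined} and \cref{P:strongmaptomorphism}.

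For the identity axiom, the strong map $\id : \cM \to \cM$ is induced by the identity set map on $E \cup \{\lop\}$. For any $\sigma \in \sfW^+\cM$, we then compute $\sfW^+\id(\sigma) = \id^{-1}(\sigma) = \sigma$, so $\sfW^+ \id$ is the identity morphism on $\sfW^+\cM$.

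For the composition axiom, suppose $\phi_f: \cM_1 \to \cM_2$ and $\psi_g: \cM_2 \to \cM_3$ are strong maps induced by $\uf : E_1 \cup \{\lop\} \to E_2 \cup \{\lop\}$ and $\underline{g} : E_2 \cup \{\lop\} \to E_3 \cup \{\lop\}$. I first verify that $\psi_g \circ \phi_f$ is again a strong map, induced by $\underline{g} \circ \uf$. Since $\uf(\lop) = \lop$ and $\underline{g}(\lop) = \lop$, the composite fixes $\lop$. For any convex $S \subseteq \pm E_3$ in $\cM_3$, the strong-map property of $\psi_g$ makes $\underline{g}^{-1}(S)$ convex in $\cM_2$, and applying the strong-map property of $\phi_f$ to this set makes $\uf^{-1}(\underline{g}^{-1}(S)) = (\underline{g} \circ \uf)^{-1}(S)$ convex in $\cM_1$. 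With composition of strong maps in hand, the desired equality collapses to the standard preimage identity: for any $\sigma \in \sfW^+\cM_3$,
$$\sfW^+(\psi_g \circ \phi_f)(\sigma) = (\underline{g} \circ \uf)^{-1}(\sigma) = \uf^{-1}(\underline{g}^{-1}(\sigma)) = (\sfW^+\phi_f \circ \sfW^+\psi_g)(\sigma).$$

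I do not anticipate any real obstacle. The contravariance of $\sfW^+$ (the order reversal $\sfW^+\phi \circ \sfW^+\psi$ rather than $\sfW^+\psi \circ \sfW^+\phi$) is baked into the definition via preimage, since $(\underline{g} \circ \uf)^{-1} = \uf^{-1} \circ \underline{g}^{-1}$. The only mild subtlety is the brief check that strong maps are closed under composition, which is immediate from the convexity formulation of strong maps used here.
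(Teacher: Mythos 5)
Your proof is correct and follows essentially the same route as the paper: both reduce the two functoriality axioms to the preimage identities $\id^{-1}(\sigma)=\sigma$ and $(\underline{g}\circ\uf)^{-1}=\uf^{-1}\circ\underline{g}^{-1}$, with well-definedness supplied by \cref{prop:welldefined}. The only difference is cosmetic: the paper takes composition of strong maps to be $\phi_g\circ\phi_f:=\phi_{g\circ f}$ by definition, whereas you verify via the convexity characterization that $\underline{g}\circ\uf$ indeed induces a strong map, a small but harmless extra check.
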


\begin{proof}\label{prop:morphism}
	Based on \cref{prop:welldefined}, the map of codes is well-defined.
The composition of strong maps is defined by	$\phi_g \circ \phi_f:=\phi_{g\circ f}$. Then 
	\[ \sfW^+(\phi_g \circ \phi_f)(\sigma) = \sfW^+(\phi_{g\circ f})(\sigma)= (g\circ f)\inv(\sigma) = f\inv \circ g\inv(\sigma) = (\sfW^+\phi_f\circ	\sfW^+\phi_g)(\sigma). \]
	Thus $\sfW^+$ respects composition of morphisms. Next, we check that
	$$\sfW^+(\phi_{\id})(\sigma)= \id \inv(\sigma) = \sigma,$$
	thus $\sfW^+$ respects the identity morphism. 
	Therefore, $\sfW^+$ is a functor. 
	\end{proof}

\begin{prop}\label{P:faithfulnotfull}
	The map $\sfW^+$ is a faithful, but not full, contravariant functor from the category	$\OM$ of acyclic oriented matroids with strong maps to the category $\Code$ of neural codes with code morphisms. 
\end{prop}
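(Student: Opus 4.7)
The plan is to prove the two claims---faithfulness and non-fullness---independently, each by unpacking the definition of $\sfW^+\phi_f$ as $\sigma\mapsto \uf^{-1}(\sigma)$.

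\emph{Faithfulness.} Suppose $\phi_f,\phi_g\colon\cM_1\to\cM_2$ are strong maps induced by set maps $\uf,\ug\colon E_1\cup\{\lop\}\to E_2\cup\{\lop\}$, and that $\sfW^+\phi_f=\sfW^+\phi_g$. The goal is to deduce $\uf=\ug$. Fix $i\in E_1$; then $i\in (\sfW^+\phi_f)(\sigma)=\uf^{-1}(\sigma)$ if and only if $\uf(i)\in\sigma$. Hence the subset
\[
\{\sigma\in\sfW^+\cM_2 \mid i\in (\sfW^+\phi_f)(\sigma)\}
\]
equals the simple trunk $\tk_{\sfW^+\cM_2}(\{\uf(i)\})$ when $\uf(i)\in E_2$, and equals $\varnothing$ when $\uf(i)=\lop$. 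The identical computation for $\ug$, together with $\sfW^+\phi_f=\sfW^+\phi_g$, forces the corresponding simple trunks to coincide for each $i$. Since $\cM_2$ is acyclic it is loopless, so every tope has full support and the positive tope $E_2$ is itself a codeword of $\sfW^+\cM_2$; this codeword distinguishes every $j\in E_2$ from $\lop$, and the remaining tope structure of $\cM_2$ distinguishes the simple trunks of different elements of $E_2$. Thus $\uf(i)=\ug(i)$ for every $i\in E_1$, as required.

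\emph{Not full.} We exhibit a code morphism that is not in the image of $\sfW^+$. Let $\cM_1$ be the rank-$1$ oriented matroid on one element and $\cM_2$ the Boolean rank-$2$ oriented matroid on two elements in general position, so that $\sfW^+\cM_1=\{\varnothing,\{1\}\}$ and $\sfW^+\cM_2=2^{[2]}$. Define $f\colon\sfW^+\cM_2\to\sfW^+\cM_1$ by $f(\sigma)=\{1\}$ if $\{1,2\}\subseteq\sigma$ and $f(\sigma)=\varnothing$ otherwise. The single defining trunk is $\tk_{\sfW^+\cM_2}(\{1,2\})$, so $f$ is a legitimate code morphism. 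On the other hand, for any strong map $\phi_g\colon\cM_1\to\cM_2$, the induced morphism $\sfW^+\phi_g$ sends $\sigma\mapsto\{i\in E_1 \mid \ug(i)\in\sigma\}$, whose defining trunk on each output coordinate $i$ is either the simple trunk $\tk_{\sfW^+\cM_2}(\{\ug(i)\})$ or the empty trunk---never the trunk of a two-element set. Hence $f$ does not lie in the image of $\sfW^+$.

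The main obstacle is the last step of the faithfulness argument: verifying that distinct elements of $E_2\cup\{\lop\}$ produce distinct simple trunks in $\sfW^+\cM_2$. This is where the acyclicity hypothesis (and the attendant fact that $E_2\in\sfW^+\cM_2$ with topes of full support) is doing real work; the remainder of both arguments is a direct unpacking of the formula $\sfW^+\phi_f(\sigma)=\uf^{-1}(\sigma)$ against the definition of morphisms in $\Code$ via trunks.
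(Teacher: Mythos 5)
Your overall strategy matches the paper's on both halves: faithfulness by recovering the ground-set map from the values of $\sfW^+\phi_f$, and non-fullness by exploiting that a morphism of the form $\sfW^+\phi_f$ can only pull a simple trunk back to a simple trunk (or to the empty set), whereas a general code morphism need not. Your non-fullness example is correct and is in fact leaner than the paper's: the paper uses a morphism between hyperplane codes on five and two neurons defined by the non-simple trunks $\tk_\scC(135)$ and $\tk_\scC(245)$, while you use the rank-one matroid and the uniform rank-two matroid on two elements, where the defining trunk $\tk_{\sfW^+\cM_2}(\{1,2\})=\{12\}$ in $2^{[2]}$ is visibly not equal to $\tk(1)$, $\tk(2)$, or $\emptyset$; your verification that no strong map induces this morphism is complete.

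The soft spot is the last step of your faithfulness argument, the assertion that ``the remaining tope structure of $\cM_2$ distinguishes the simple trunks of different elements of $E_2$.'' Acyclicity alone does not give this: two distinct elements $j,j'\in E_2$ satisfy $\tk_{\sfW^+\cM_2}(j)=\tk_{\sfW^+\cM_2}(j')$ precisely when no tope separates them, i.e.\ when they are parallel, and acyclic matroids may have parallel elements. Concretely, if $\cM_2$ is the matroid of two copies of the same oriented hyperplane, then $\sfW^+\cM_2=\{12,\emptyset\}$ and the two simple trunks coincide; the set maps sending the single element of the rank-one matroid $\cM_1$ to $1$ and to $2$ are both strong (the source has no circuits, so every subset of $\pm E_1$ is convex) yet induce the same code morphism. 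So your argument, as written, establishes injectivity of $\sfW^+$ on morphisms only when distinct elements of the target are separated by some tope. To be fair, this is exactly the content the paper compresses into ``it is easy to read out the map on ground sets from the values of $\sfW^+\phi$,'' so the gap is shared with the published argument rather than introduced by you; but your appeal to acyclicity does not close it, and you should either add the no-parallel-elements hypothesis at this step or address how parallel targets are to be handled.
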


Since we have already proven that the map of categories $\sfW^+$ is indeed a functor, we only need
to prove that the functor is faithful but not full to complete the proof of \cref{P:faithfulnotfull}. 

\begin{proof}

	For a given strong map $\phi : \cM_1 \to \cM_2$, it is easy to read out 
	the map on ground sets $E_1 \to E_2$ from the values of $\sfW^+ \phi$. Because
	the set map uniquely determines the strong map, the functor is faithful -- that is, it is injective on morphisms. 
	
	To show that not all morphisms of codes arise from strong maps of oriented matroids we produce
	the following example:

	Take the morphism $f:\scC\to \scD$, where
	\[
\begin{array}{lcl}
  \scC & = & \{\varnothing, 1, 12345, 1235, 1245, 13, 1345, 135, 14, 145, 2, 23, 2345, 235,
  24, 245, 3, 4\} \\
\scD & = & \{\varnothing, 1', 1'2', 2'\},
\end{array}
\]
and the morphism is defined by trunks $\tk_\scC(135)$ and $\tk_\scC(245)$. See \cref{fig:non-strong} for realizations of these codes. By construction, $f$ is a morphism of neural codes. Both codes are hyperplane codes, thus they arise from oriented matroids $\cM_2$ and $\cM_1$. However, the map $f$ does not arise from any strong map. To see this, notice that the proof of \cref{P:strongmaptomorphism} actually proves that the preimage of a simple trunk is a simple trunk for any morphism arising from a strong map. However, by construction, $f\inv(\tk_\scD(1')) =\tk_\scC(135)$, which is not a simple trunk. 

	This proves that the functor is not full.
	\end{proof}
	
\begin{figure}
	\includegraphics[width = 5.5 in]{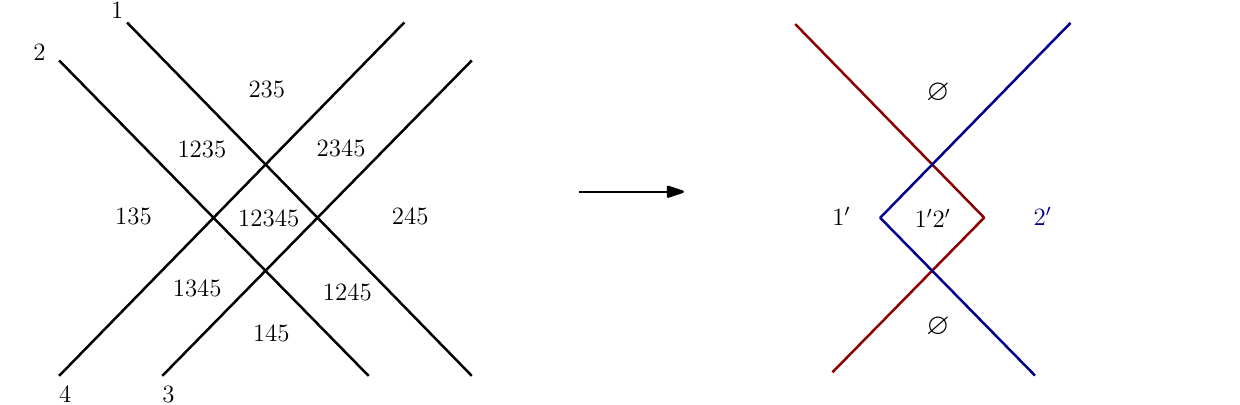}
	\caption{Partial realization of code~$\scC$ (left) and a realization of code~$\scD$ (right).
	To construct the complete realization of $\scC$,
		embed this figure in the plane $z = 1$ in $\R^3$. For $i = 1, 2, 3, 4$, let the plane $H_i$ be the plane spanned by the line embedded in the plane $z = 1$ and the origin, and let the $H_5$ be the plane $z = 0$, oriented up. Notice that the canonical construction of a realization of $\scD$ from the realization of $\scC$ is not a hyperplane realization, even though $\scD$ happens to be a hyperplane code.} 
		\label{fig:non-strong}
\end{figure}

\subsection{Oriented matroids to rings}\label{sec:functorI}
We now describe the oriented matroid ring and show that the map taking an oriented matroid to its associated ring is a functor.
The key ingredient for doing this is the oriented matroid ideal introduced
in \cite{novik2002syzygies}. As defined in that paper, the oriented matroid ideal
is associated to {\em affine} oriented matroids; in other words, oriented
matroids with a distinguished element. We alter their definition to avoid
the need for a distinguished element,
and show that the affine oriented matroid ideal can be constructed algebraically from the oriented matroid ideal.
Finally, we define the oriented matroid ring as the quotient by the Alexander dual ideal.
We define the functor $\sfS$ which takes an oriented matroid to its oriented matroid ring and describe its image,
which we take as our category $\OMRing$.

Fix a field $k$.
We will consider polynomial rings over $k$ with variables indexed by the ground set $E$ of an oriented matroid;
when the indexing set is apparent, we will denote these as $k[\bx,\by]$ or $k[\bx]$.
The affine oriented matroid ideal is defined in \cite{novik2002syzygies} (under
the name ``oriented matroid ideal'') with an equivalent description from their
Proposition~2.8 as follows:

\begin{defn}
  Let $\cM = (E,\cL,g)$ be an affine oriented matroid with $E = \{1,\ldots,n,g\}$.  \label{defn:affOMI}
  
  \noindent ({\bf Covectors})  For every sign vector $Z \in \{0,+,-\}^E$, associate a monomial
  \[ m_{xy}^{(g)}(Z)= \left( \prod_{i : Z_i = +} x_i \right) \left( \prod_{i : Z_i = -} y_i\right)
  \text{  where  } x_g = y_g = 1.\]
  The affine oriented matroid ideal $O_g(\cM)$ is the ideal in
  $k[{\bf x},{\bf y}]$ generated by all
  monomials corresponding to covectors $Z \in \cL^+ = \{X\in \cL \mid X_g = +\}$.\\[-3mm]

  \noindent ({\bf Circuits})  The minimal prime decomposition of the affine oriented matroid ideal is \\ $O_g(\cM) = \bigcap_{C} P_C^{(g)}$,
  where $P_C^{(g)}$ is the ideal generated by variables $\langle x_i,y_j\mid i \in C^+, j \in C^-$, $j \neq g \rangle$,
  and the intersection is over all circuits $C$ such that $g \in C^-$.

\end{defn}

We define the oriented matroid ideal in terms of generators, and show that it also has
this dual description in terms of minimal primes.

\begin{defn}
   \label{def:OMI} 
  Let $\cM$ be a loopless oriented matroid.   
  Let the {\em oriented matroid ideal} $O(\cM)$ denote the ideal generated as \[\left\langle m_{xy}(Z): Z \text{  tope of  } \cM\right\rangle, \text{   where  } m_{xy}(Z)= \left( \prod_{i : Z_i = +} x_i \right) \left( \prod_{i : Z_i = -} y_i\right).\]
  \\[-5mm]  \end{defn}

\begin{rmk}\label{R:OMgens}
  This definition can be extended in a straightforward way to matroids with loops,
  but ``topes'' would be replaced with ``complements of covectors.'' Note that the
  minimal complements of covectors in loopless matroids are indeed the topes.
\end{rmk}

\begin{prop}\label{DP:omi} 
  Let $\cM = (E,\cL)$ be a loopless oriented matroid.
  The minimal prime decomposition of $O(\cM)$ is given by
  $O(\cM) = \bigcap_{C} P_C$,
  where $P_C$ is the ideal generated by variables $\langle x_i,y_j \mid i \in C^+, j \in C^-\rangle$,
  and the intersection is over \emph{all} (proper and improper) circuits $C$.
\end{prop}

\begin{proof}
  Note that $O(\cM)$ is a monomial ideal, as is the intersection of the monomial ideals $\{P_C\}_C$.
  Therefore, it is sufficient to check that the sets of monomials in the two ideals are identical.

  First, consider $m_Z = m_{xy}(\pm [n] \setminus Z)$ for $Z \in \cW(\cM)$. We will show
  that $m_Z \in P_C$ for all $C \in \cC(\cM)$.
  For each element $b \in E$, exactly one of $b$ or $-b$ is in every tope
  $Z$, so $m_Z \in \langle x_b, y_b\rangle$; this covers improper circuits of the form $\{ b,\bar{b}\}$.
  For every proper circuit $C$,  both $\sep(Z,C)$ and $\sep(Z,-C)$ are non-empty by tope-circuit orthogonality.
In this case, there exists $i \in C$ (resp, $-i \in C$) such that
  $i \notin Z$ ($-i \notin Z$); this means that $x_i \mid m_Z$
  for $i \in C$ which implies $m_Z \in P_C$. Since $m_Z \in P_C$ for all types of circuits,
  it is also in the intersection.
  
  In the reverse direction, we show that for any monomial $m$ in $\bigcap_C P_C$, there
  is a tope $Z$ such that $m_{xy}(\pm [n] \setminus Z) \mid m$.
  For all 
  elements $j \in E$, either $x_j \mid m$ or $y_j \mid m$;
  so there exist disjoint sets $I,J$ such that $[n] = I \cup J$ 
  and
  \[m_{I,J} = \left( \prod_{i \in I} x_i \prod_{j \in J} y_j \right)\mid m.\]

  We claim that $Z = I \cup \bar{J}$ is a tope of $\cM$. It is enough to show that
  every circuit $C \in \cC(\cM)$ is orthogonal to $Z$. 
  The fact that $m_{I,J} \in P_C$ and
  $m_{I,J} \in P_{-C}$ means that both
  $\sep(Z,C)$ and $\sep(Z,-C)$ are nonempty, implying orthogonality. 
\end{proof}

The affine oriented matroid ideal can be obtained from the oriented matroid ideal using the following construction.
\begin{prop}
  The affine oriented matroid ideal $O_g(\cM)$
  can be obtained via the following ideal quotient and specialization
  \[O_g(\cM) = [O(\cM): O(\cM \setminus g)]\:
  \vline_{\:\:\substack{x_g = 1 \\ y_g = 0}} \subseteq k[x_1,\ldots,x_n,y_1,\ldots,y_n].\] 
\end{prop}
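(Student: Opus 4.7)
The plan is to exploit the primary decomposition $O(\cM) = \bigcap_{C \in \cC(\cM)} P_C$ from \cref{DP:omi}, compute the colon ideal $[O(\cM) : O(\cM \setminus g)]$ as an intersection of monomial primes, and then analyze the specialization $\pi\colon k[{\bf x},{\bf y}] \to k[{\bf x},{\bf y}]/(x_g - 1, y_g)$.

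First I would apply the standard identity $[\bigcap_C P_C : J] = \bigcap_C [P_C : J]$ together with the observation that, for a prime $P_C$, $[P_C : J] = (1)$ when $J \subseteq P_C$ and equals $P_C$ otherwise. To determine when $O(\cM \setminus g) \subseteq P_C$, I would note that $\cC(\cM \setminus g) = \{C' \in \cC(\cM) : g \notin \underline{C'}\}$, so the minimal primes of $O(\cM \setminus g)$ are the $P_{C'}$ for such $C'$. The monomial containment $P_{C'} \subseteq P_C$ amounts to $(C')^+ \subseteq C^+$ and $(C')^- \subseteq C^-$, and axiom (C\ref{axiom:incomparableC}) then forces $C' = \pm C$; combined with $g \notin \underline{C'}$, this requires $g \notin \underline C$. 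Consequently,
\[ [O(\cM) : O(\cM \setminus g)] = \bigcap_{C \in \cC(\cM),\ g \in \underline C} P_C. \]

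Next I would compute $\pi(P_C)$ case by case for $g \in \underline C$. If $g \in C^+$ (covering both proper circuits with $g \in C^+$ only and the improper circuit $\{g, \bar g\}$), then $x_g \in P_C$, so $\pi(P_C) = (1)$. If $C$ is proper with $g \in C^-$ only, then $y_g$ is killed and $\pi(P_C) = P_C^{(g)}$. Intersecting the nontrivial contributions,
\[ \bigcap_{C : g \in \underline C} \pi(P_C) = \bigcap_{C \text{ proper},\ g \in C^-} P_C^{(g)} = O_g(\cM), \]
by the circuit description in \cref{defn:affOMI}.

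The main obstacle will be that $\pi$ applied to an intersection is in general only contained in, not equal to, the intersection of images. The inclusion $\pi([O(\cM) : O(\cM \setminus g)]) \subseteq O_g(\cM)$ is automatic, so I would need the reverse explicitly. Given a monomial $M \in O_g(\cM)$, I would lift it to a monomial $\widetilde M \in k[{\bf x},{\bf y}]$ involving no $x_g$ or $y_g$, and set $F := x_g \widetilde M$. Then $\pi(F) = M$, and a short case check verifies $F \in P_C$ for every $C$ with $g \in \underline C$: whenever $x_g \in P_C$, the factor $x_g$ alone puts $F$ in $P_C$; while if $C$ is proper with $g \in C^-$ only, the hypothesis $M \in P_C^{(g)}$ supplies a variable in $P_C^{(g)} \subseteq P_C$ dividing $\widetilde M$, hence $F$. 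Therefore $F \in [O(\cM) : O(\cM \setminus g)]$ and $M \in \pi([O(\cM) : O(\cM \setminus g)])$, establishing the desired equality.
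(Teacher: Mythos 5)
Your proof is correct, and it reaches the statement by a more elementary route than the paper's. Both arguments start from the minimal prime decomposition $O(\cM)=\bigcap_{C} P_C$ and use the fact that colon ideals commute with intersections, but the executions differ. The paper groups the primes into three blocks according to the sign of $C_g$, rewrites them as $O(\cM\setminus g)$, $\langle x_g\rangle + O_{-g}(\cM)$, and $\langle y_g\rangle + O_g(\cM)$, and its crux is a monomial-by-monomial argument (invoking a covector divisibility fact from \cite{bjorner1999oriented}) showing that the third block is unchanged by the colon with $O(\cM\setminus g)$; the specialization $x_g=1,\ y_g=0$ is then applied block by block. You instead compute $[P_C : O(\cM\setminus g)]$ prime by prime, using only the elementary dichotomy that the colon of a prime is either the prime itself or the unit ideal, with the deletion description of $\cC(\cM\setminus g)$ and the incomparability axiom (C\ref{axiom:incomparableC}) deciding which case occurs, so no covectors enter at all. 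A further merit of your write-up is that you isolate and explicitly justify the step the paper leaves implicit, namely that the specialization commutes with the intersection of the surviving components: your lift $F = x_g\widetilde M$ of a monomial $M \in O_g(\cM)$ handles exactly this, and the verification that $F$ lies in every $P_C$ with $g \in \underline{C}$ is complete (the improper circuit $\{g,\bar g\}$ is covered by the factor $x_g$). The trade-off is that the paper's blockwise form displays the affine ideals $O_{\pm g}(\cM)$ along the way, while your computation is shorter and essentially self-contained.
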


\begin{proof}
  By \cref{defn:affOMI},
  \[
  \begin{matrix}
    O(\cM)   = \displaystyle \bigcap_{C \in \cC(\cM)} P_C 
    &  = & 
    \displaystyle\bigcap_{\substack{C_g = 0\\ C \in \cC(\cM)}} P_C & \cap &
    \displaystyle\bigcap_{\substack{C_g = +\\ C \in \cC(\cM)}} P_C  & \cap &
    \displaystyle\bigcap_{\substack{C_g = -\\ C \in \cC(\cM)}} P_C \\[8mm]
    &  = &   \displaystyle\bigcap_{\substack{C_g = 0\\ C \in \cC(\cM)}} P_C & \cap &
    \left(\langle x_g \rangle + \displaystyle\bigcap_{\substack{C_g = +\\ C \in \cC(\cM)}}
    P_C^{(-g)}\right)   &
    \cap & \left(\langle y_g \rangle + \displaystyle\bigcap_{\substack{C_g = -\\ C \in \cC(\cM)}} P_C^{(g)}\right)\\[11mm]
    & = &  O(\cM \setminus g) & \cap &
   ( \langle x_g \rangle + O_{-g}(\cM) ) &\cap &  ( \langle y_g \rangle + O_g(\cM) ) 
    \end{matrix}
\]
  
Ideal quotients commute with intersection, so we can apply the
quotient to each component.
The first component becomes the ideal 
quotient of $ O(\cM \setminus g)$ by itself, which is the full ring.
After specializing $x_g = 1$, the second component is also the full ring.
Turning to the third component,
we need to prove that
$\big((\langle y_g \rangle + O_g(\cM)):O(\cM\setminus g) \big)= \langle y_g \rangle + O_g(\cM)$.

A monomial $m$ is in the quotient if and only if
for all $A = \pm [n] \setminus B$ where $B \in \cL(\cM\setminus g)$, either
$y_g \mid m \cdot m_{xy}(A)$ or
there exists covector $Z\in \cL(\cM)$ with $Z_g = -$ such that
$m_{xy}^{(g)}(Z) \mid m \cdot m_{xy}(A)$.
Suppose $ m \cdot m_{xy}(A) \in \langle y_g \rangle $.
Then $y_g \mid m$ since $m_{xy}(A)$ is defined on the deletion by $g$;
this implies that $m \in \langle y_g \rangle$.
Suppose instead that $m \cdot m_{xy}(A) \in O_{g}(\cM)$.
This implies that there is a covector $Z$ of $\cM$  with $Z_g = -$ such that
$m_{xy}^{(g)}(Z) \mid m \cdot m_{xy}(A)$.
By \cite[Prop 3.8.2 (b)]{bjorner1999oriented}, this implies that the support of $m$
is a covector of the matroid. Since $y_g \nmid m_{xy}(A)$, the support of $m$ must include $y_g$,
  implying that its support is a covector $B$ of $\cM$ with $B_g = -$. This implies  $m \in O_{g}(\cM)$. We conclude that $\big((\langle y_g \rangle + O_g(\cM)):O(\cM\setminus g) \big)= \langle y_g \rangle + O_g(\cM)$. Specializing $y_g = 0$ leaves us with $O_g(\cM)$.  
\end{proof}

One more step is needed to make the functor $\sfS$ work. 
The oriented matroid ideal $O(\cM)$ is a square-free monomial ideal; we take its Alexander dual (see e.g.\ \cite[Definition 1.35]{miller2004combinatorial}) to obtain $O(\cM)^\star$.
This takes the oriented matroid ideal and swaps the role of topes and circuits; i.e.\ irreducible components now correspond to topes, and monomial generators to circuits.
Let $W$ be a tope and let $\mathfrak{p}(W) = \langle x_e \mid W_e = +\rangle + \langle y_e \mid W_e = -\rangle$.
Then, for acyclic oriented matroids,
	\begin{align}
		O(\cM)^\star = \langle m_{xy}(C) \mid C \in \cC(\cM)\rangle = \bigcap_{W \in \cW(\cM)} \mathfrak{p}(W). \label{eq:OMstar}
	\end{align}
The oriented matroid ring is then the quotient ring $k[\bx,\by]/O(\cM)^\star$.

\begin{prop}\label{P:Ifunctor}
	Let $\OMatroid$ be defined as above.

	Let $\cM$ be an oriented matroid and $\phi:\cM_1\to\cM_2$ be a strong map of matroids with associated set map $f:E_1\cup \{\lop\}\to E_2\cup \{\lop\}$.

	Define $\sfS \cM = S_\cM = k[\bx,\by]/O(\cM)^\star$.
	Define $\sfS \phi_f: \sfS\cM_1 \to \sfS \cM_2$ by
	\[ (\sfS\phi_f) ( x_i) = 
	\begin{cases} 0 & f(i) =\lop\\
	x_{f(i)} & else. \end{cases}  \hspace{1cm} (\sfS\phi_f) (y_i) = \begin{cases} 0 & f(i) =\lop\\
	y_{f(i)} & else. 
	\end{cases} \]
	We refer to the ring $S_\cM$ as an \emph{oriented matroid ring} and the map $\sfS\phi_f$ as a \emph{strong monomial map}. 
	Then, $\sfS$ is a covariant functor from $\OM$ to $\Ring$. 
\end{prop}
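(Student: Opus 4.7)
The plan is to verify the three conditions for a covariant functor: that $\sfS\phi_f$ is well-defined as a ring homomorphism between the quotient rings, that $\sfS$ sends identities to identities, and that $\sfS$ respects composition. The latter two properties reduce to checking on the generators $x_i, y_i$ of $k[\bx_{E_1},\by_{E_1}]$: for $\phi_{\id}$, both variables are sent to themselves; for the composition $\phi_g \circ \phi_f$, a case analysis on whether $f(i)$ and $g(f(i))$ equal $\lop$ shows $\sfS(\phi_g \circ \phi_f)$ and $\sfS\phi_g \circ \sfS\phi_f$ agree on every generator, in direct parallel with the argument for $\sfW^+$ in \cref{prop:morphism}.

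The substance lies in well-definedness, namely showing that the map on polynomial rings induced by $\sfS\phi_f$ sends $O(\cM_1)^\star$ into $O(\cM_2)^\star$. By \eqref{eq:OMstar}, $O(\cM_2)^\star = \bigcap_{W \in \cW(\cM_2)} \mathfrak{p}(W)$, so it suffices to fix a generator $m_{xy}(C)$ with $C \in \cC(\cM_1)$ and an arbitrary tope $W \in \cW(\cM_2)$, and show that $\sfS\phi_f(m_{xy}(C)) \in \mathfrak{p}(W)$. If some $e \in \underline C$ satisfies $f(e) = \lop$, the image is $0$ and the claim holds trivially, so we may assume $f$ is non-$\lop$ on all of $\underline C$.

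To finish this case, I invoke \cref{prop:welldefined} to lift $W$ to a tope $Z \in \cW(\cM_1)$ with $f^{-1}(W^+) = Z^+$; by the explicit construction in the proof there, $Z_i = W_{f(i)}$ whenever $f(i) \neq \lop$. Since $\cM_1$ is acyclic and hence loopless, $Z$ has full support, so tope-circuit orthogonality applied to $C$ and $Z$ produces some $e \in \underline C$ with $C_e = Z_e$. Then $W_{f(e)} = Z_e = C_e$, and accordingly the variable $x_{f(e)}$ (if $C_e = +$) or $y_{f(e)}$ (if $C_e = -$) both divides $\sfS\phi_f(m_{xy}(C))$ and lies in $\mathfrak{p}(W)$, as required.

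The main obstacle is to bridge the combinatorial data of a strong map with the algebraic generators of the Alexander dual ideal; once \cref{prop:welldefined} supplies the lifted tope $Z$, the rest is a single appeal to tope-circuit orthogonality. Improper circuits $\{i,\bar i\}$ require no separate treatment, since $x_i y_i$ always lies in $\mathfrak{p}(W)$ for a tope $W$ of a loopless matroid, and its image $x_{f(i)} y_{f(i)}$ (or $0$) lies in $O(\cM_2)^\star$ for the same reason.
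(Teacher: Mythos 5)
Your proof is correct, but the key well-definedness step takes a genuinely different route from the paper. The paper argues directly with circuits and the convexity definition of strong maps: assuming $f$ is non-$\lop$ on $\underline{C}$, it shows either that $f(C)$ contains an improper circuit (giving divisibility by some $x_jy_j$) or, via the convex-closure argument, that $f(C)$ contains a proper circuit $D$ of $\cM_2$, so that the generator $m_{xy}(D)$ divides $\sfS\phi_f(m_{xy}(C))$. You instead dualize: using the decomposition $O(\cM_2)^\star = \bigcap_{W\in\cW(\cM_2)}\mathfrak{p}(W)$ from \eqref{eq:OMstar}, you check membership in each minimal prime by lifting $W$ to a tope $Z$ of $\cM_1$ via \cref{prop:welldefined} (and indeed $f^{-1}(W^+)=Z^+$ together with full support of both topes already forces $Z_i = W_{f(i)}$ whenever $f(i)\neq\lop$, so you need not appeal to the internal construction), and then extracting an agreement element from tope--circuit orthogonality. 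Your route has the appeal of reusing machinery already established for $\sfW^+$, so the algebraic statement becomes almost a formal shadow of the combinatorial one and anticipates the commutativity $\sfD\circ\sfS=\sfR\circ\sfW^+$; the cost is that it leans on acyclicity and looplessness (through \cref{prop:welldefined} and the tope decomposition), which is harmless here since the category consists of acyclic matroids, whereas the paper's circuit-image argument is independent of acyclicity and yields the slightly stronger structural fact that a strong map carries each circuit to a set containing a circuit, i.e.\ sends generators to multiples of generators. Your handling of the degenerate cases (elements mapped to $\lop$, improper circuits) and of identity and composition matches the paper and is fine.
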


\begin{proof}
We need to prove that this map defines a ring homomorphism, respects the identity morphism, and respects 
composition of morphisms.

We begin by checking that the map $\sfS \phi_f$ is a ring homomorphism. 
Since it is defined as a map on generators, $\sfS\phi_f $ defines a ring homomorphism  
$k[x_1, \ldots, x_{n_1}, y_1, \ldots, y_{n_1}] \to  k[x_1, \ldots, x_{n_2}, y_1, \ldots, y_{n_2}].$
We need to check that this map respects the quotient structure. 
 That is, we must show that if $m\in O(\cM_1)^\star$, then  $\sfS\phi_f(m)\in O(\cM_2)^\star$. 
 
 Since $O(\cM_1)^\star$ is a monomial ideal, it is sufficient to check this for monomials $m = \prod_{i\in I}x_i \prod_{j\in J}y_j.$
If $f(i) = \lop$ (or $f(j)=\lop$) for some $i \in I$ ($j\in J$), then $\sfS \phi_f(m) = 0\in O(\cM_2)^\star$.
Next, we consider the case when $\sfS\phi_f(x_i) = x_{f(i)}, \sfS\phi_f(y_j)= y_{f(j)}$ for all $i\in I, j\in J$.  
Because $O(\cM_1)^\star$ is a monomial ideal, $m \in O(\cM_1)^\star$ implies
  that $\bx^{C+}\by^{C-}$ divides $m$ for some generator $\bx^{C+}\by^{C-} \in O(\cM_1)^\star$. 

 If $f(C)$ is not a signed set, then it contains an improper circuit of the form $\{i, \bar i\}$, so $\sfS\phi_f (m)$ is divided by $x_i y_i$, so $\sfS\phi_f (m)\in  O(\cM_2)^\star$ as desired.
Thus, suppose that $f(C)$ is a signed set. 
We show that $f(C)$ contains a circuit. 
Let $e\in C$. 
We will show that $-f(e)$ is in the convex closure of $f(C)$; this implies that there is a circuit $D$ of $\cM_2$ such that 
$$f(e) \in D \subseteq f(C) \cup \{f(e)\} = f(C), $$ which is what we need.  
Suppose that $-f(e)$ is not in the convex closure of $f(C)$. Then there is some convex set $S$ such that $-f(e) \notin S$, $f(C)\subset S$. 
By the definition of a strong map, $f\inv (S)$ must be convex. 
However, $-e \notin f^{-1}(S)$, and 
$$e \in C \subset f\inv(S) \cup \{e\},$$
contradicting convexity of $S$. 
We conclude that $-f(e)$ is in the convex hull of $f(C)$. Thus, there is a circuit  $D$ of $\cM_2$ such that 
$f(e) \in D \subseteq f(C)$, so $x^{D^+}y^{D^-}$ divides $f(m)$.


To see that $\sfS$ respects the identity morphism, note that if $f(i) = i$ for each $i\in E$, then $\sfS \phi_f(x_i) = x_i$ and $\sfS \phi_f(y_i) = y_i$, so $\sfS\phi_f$ is the identity on $\sfS\cM$. Now, let $\phi_f$ and $\phi_g$ be strong maps. First, suppose neither $f(i) = \lop$ nor $g(f(i)) = \lop$.  Without loss of generality, we check that composition of morphisms is respected on the $x_i$. Then $\sfS(\phi_f \phi_g)(x_i) = x_{f\circ g(i)} = (\sfS f)(\sfS g) x_i$. Now, if either $f(i) = \lop$ or $g(f(i)) = \lop$, then $ \sfS(\phi_f \phi_g)(x_i)  = 0 =  (\sfS f)(\sfS g) x_i$. Thus the map $\sfS$ respects composition of morphisms. 
\end{proof}

We define the category $\OMRing$ to be the category whose objects are oriented matroid rings $S_\cM$ with distinguished generators $x_1, \ldots, x_n, y_1, \ldots, y_n$.  The morphisms of $\OMRing$ are the strong monomial maps $\sfS \phi_f$, where $\phi_f$ is a strong map of oriented matroids.


\subsection{Oriented matroid rings to neural rings and back}\label{sec:functorD}

The final piece of the puzzle is describe the relationship between $\OMRing$
and the category of neural rings $\NRing$. Note that neural rings are defined over $\F_2$, thus we take all rings in this section to be over $\F_2$. 
The vanishing ideal of a code is a \emph{pseudo-monomial} ideal, meaning it has a pseudo-monomial generating set.
Polarization of a pseudo-monomial ideal, introduced in \cite{gunturkun2017polarization},
produces a true monomial ideal which encodes the same combinatorial information.
As $\sfW^+$ is not a full functor and $\sfR$ is an equivalence of categories, there is no reason to expect polarization to be a functor.
Instead, we will use the operation of depolarization to define the functor
$\sfD$ so that $\sfR \circ \sfW^+ = \sfD \circ \sfS$, i.e. the diagram below commutes.
\begin{equation}
\begin{tikzcd}
\OMatroid  \arrow[r, "\sfS"] \arrow[d, "\sfW^+"]
& \OMRing \arrow[d, "\sfD"] \\
\Code \arrow[r,  "\sfR"]
& \NRing
\end{tikzcd} \label{eq:functordiagram}
\end{equation}

\begin{defn}
	Let $S_\cM$ be an oriented matroid ring. 
	Define $\sfD S_\cM$ to be the ring $S_\cM/\langle x_i + y_i -1 \mid i\in [n]\rangle$ with distinguished
	coordinate functions $x_1, \ldots, x_n$. 
	
	If $\phi: S_{\cM_1} \to S_{\cM_2}$ is a morphism in $\OMRing$ 
	with underlying set map $f: E_1 \cup \{\lop\} \to E_2 \cup \{\lop\}$,
	then define $\sfD \phi : \sfD S_{\cM_1} \to  \sfD S_{\cM_2}$  to be the map sending $x_i \mapsto x_{f(i)}$ if $f(i) \neq \lop$ and $x_i \mapsto 0$ otherwise. 
\end{defn}

\begin{prop}\label{prop:D_functor}
	The map $\sfD$ is a functor $\OMRing$ to $\NRing$.
\end{prop}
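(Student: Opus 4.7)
The plan is to verify the three requirements for $\sfD$ to be a covariant functor: the object-level assignment lands in $\NRing$, the morphism-level assignment lands in the morphisms of $\NRing$, and identities and composition are preserved. The crucial step is the identification $\sfD S_\cM = R_{\sfW^+\cM}$, which will simultaneously establish commutativity of the diagram \eqref{eq:functordiagram}.

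First I would eliminate the $y_i$'s from the presentation. Over $\F_2$, the relation $x_i + y_i - 1 = 0$ is equivalent to $y_i = 1 - x_i$, so
\[ \sfD S_\cM \;\cong\; \F_2[x_1,\ldots,x_n]\,/\,J, \qquad J = \big\langle\, x^{C^+}(1-x)^{C^-} \,\big|\, C \in \cC(\cM)\,\big\rangle, \]
with the improper circuits $\{i,\bar i\}$ contributing the Boolean relations $x_i(1-x_i)$. I would then prove $J = I_{\sfW^+\cM}$ in two directions. The containment $J \subseteq I_{\sfW^+\cM}$ is tope-circuit orthogonality: for each proper circuit $C$ and tope $W$ some $e$ satisfies $C_e = -W_e \neq 0$, so $x^{C^+}(1-x)^{C^-}$ vanishes at $W^+$. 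For the reverse containment, suppose $x^\sigma(1-x)^\tau$ vanishes on every positive tope part and $\sigma \cap \tau = \varnothing$ (otherwise it is divisible by some $x_i(1-x_i) \in J$). Then no tope of $\cM$ contains the signed set $\sigma \cup \bar\tau$, and the four-painting argument from the proof of \cref{prop:topeconvex} produces a signed circuit $C \subseteq \sigma \cup \bar\tau$; this gives $C^+ \subseteq \sigma$ and $C^- \subseteq \tau$, so the generator $x^{C^+}(1-x)^{C^-} \in J$ divides $x^\sigma(1-x)^\tau$. Hence $\sfD S_\cM$ coincides with $R_{\sfW^+\cM}$ as a neural ring with distinguished coordinates.

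Once this object-level identification is made, I would show that $\sfD\phi_f$ coincides with the monomial map $\sfR\sfW^+\phi_f$. The trunk computation in the proof of \cref{P:strongmaptomorphism} gives $(\sfW^+\phi_f)^{-1}(\tk_{\sfW^+\cM_1}(i)) = \tk_{\sfW^+\cM_2}(f(i))$ when $f(i) \in E_2$, and equals the empty trunk when $f(i) = \lop$; applying $\sfR$ translates these into the assignments $x_i \mapsto x_{f(i)}$ and $x_i \mapsto 0$, respectively, which is exactly the definition of $\sfD\phi_f$. This shows $\sfD\phi_f$ is a well-defined morphism in $\NRing$ and simultaneously establishes commutativity of \eqref{eq:functordiagram} on morphisms.

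Functoriality then follows immediately from the explicit formula. The identity strong map induces the identity on each $x_i$, and for a composition $\phi_g \circ \phi_f = \phi_{g \circ f}$ the computation $\sfD(\phi_g\phi_f)(x_i) = x_{g(f(i))} = \sfD\phi_g(\sfD\phi_f(x_i))$ holds whenever $f(i) \neq \lop$ and $g(f(i)) \neq \lop$; the remaining cases are uniform since $\sfD\phi_g(0) = 0$. The main obstacle is the identification $J = I_{\sfW^+\cM}$: this is precisely where the oriented matroid axioms (tope-circuit orthogonality and four-painting) enter essentially, while everything else is routine bookkeeping once this identification is in place.
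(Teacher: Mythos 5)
Your proof is correct, but it is organized differently from the paper's. The paper's own proof of this proposition is deliberately ``soft'': it shows only that $\sfD S_\cM$ is \emph{some} neural ring (the ideal $(O(\cM)^\star + D)/D$ is pseudomonomial and contains the Boolean relations $x_i(1-x_i)$), and it gets well-definedness of $\sfD\phi$ on morphisms from the polarization property $x^\s(1-x)^\t \in (O(\cM)^\star+D)/D \iff x^\s y^\t \in O(\cM)^\star$ of \cite{gunturkun2017polarization}; the identification of $\sfD\sfS\cM$ with $\sfR\sfW^+\cM$ is postponed to \cref{prop:D_commutes}, where it is done via the primary decompositions \eqref{eq:OMstar} and $I_{\sfW^+\cM} = \bigcap_W \mathfrak{m}(W)$ together with the symmetry axiom, and the morphism comparison is done by tracing $\sfR$ on $\sfW^+\phi_f$ directly. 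You instead prove the generator-level identity $J = I_{\sfW^+\cM}$ up front (tope--circuit orthogonality for $J \subseteq I$, and the circuit-existence fact via the four-painting argument of \cref{prop:topeconvex} for $I \subseteq J$, using that $I$ is generated by pseudomonomials), and then obtain well-definedness of $\sfD\phi_f$ by exhibiting it as $\sfR\sfW^+\phi_f$ through the trunk computation of \cref{P:strongmaptomorphism}. This buys you \cref{prop:D_commutes} essentially for free, at the cost of invoking oriented-matroid machinery where the paper only needs a citation to polarization; it is the route closest to showing the neural ideal of $\sfW^+\cM$ ``is'' the depolarized circuit ideal. Two small points to make explicit: your orthogonality step ($J \subseteq I$) uses that topes have full support, which holds because objects of $\OMRing$ come from acyclic, hence loopless, matroids; and when $f(i)=\lop$ the preimage trunk is empty, so the corresponding coordinate function pulls back to the zero function under $\sfR$, matching $\sfD\phi_f(x_i)=0$ — you state this, and it is the correct reading of the convention for empty trunks.
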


\begin{proof}
	We first show $\sfD$ maps an oriented matroid ring to a neural ring.
	Denote $S = \F_2[\bx,\by]$ and $D = \langle x_i + y_i - 1 \mid i \in [n] \rangle \subseteq S$.
	Let $\bar{D}$ denote the ideal with the same generators as $D$, but considered as an ideal of $S_\cM$, i.e.\ $\sfD S_\cM = S_\cM / \bar{D}$, and let $S' = O(\cM)^\star + D \subseteq S$.
We apply standard isomorphism theorems to conclude
		\[ \sfD S_\cM = S_\cM / \bar{D} \cong S/S' \cong (S/D) / (S'/D). \]
	Observe that $S/D \cong \F_2[\bx]$ under the map $y_i \mapsto 1-x_i$. Under this same map, $x^\s y^\t \mapsto x^\s(1-x)^\t$, so $S'/D$ is a pseudo-monomial ideal; since $x_iy_i \in O(\cM)^\star$ for all $i \in [n]$, we have $x_i(1-x_i) \in S'/D$ for all $i$ and therefore $S'/D$ is the vanishing ideal of a combinatorial code.
	
	Next we check that if $\phi: S_{\cM_1} \to S_{\cM_2}$ is a strong monomial map, then $\sfD\phi$ is a monomial map of neural rings.
	By definition, $\phi$ induces a monomial map $\F_2[x_1,\dots,x_{n_1}] \to \F_2[x_1,\dots,x_{n_2}]$, sending each $x_i$ to some $x_j$ or 0 as appropriate.
	So, we only need to check this is a well-defined ring homomorphism.
	This follows from properties of polarization: $x^\s(1-x)^\t \in (O(\cM)^\star + D)/D$ if and only if $x^\s y^\t \in O(\cM)^\star$ \cite{gunturkun2017polarization}. Therefore,
		\begin{align*}
			x^\s(1-x)^\t \in (O(\cM)^\star + D)/D & \implies x^\s y^\t \in O(\cM_1)^\star \implies \phi(x^\s y^\t) \in O(\cM_2)^\star \\
			&\implies \sfD\phi(x^\s(1-x)^\t) \in (O(\cM_2)^\star + D)/D.
		\end{align*}
	Thus, $\sfD\phi$ is a well-defined monomial map.
	
	To complete the proof $\sfD$ is a functor, we need to show $\sfD$ respects the identity and composition of morphisms.
	These are immediate from the definitions: $\sfD\operatorname{id}x_i = x_{\operatorname{id}(i)}$ and $\sfD(\phi_f \circ \phi_g)(x_i) = x_{f\circ g(i)} = \sfD\phi_f \circ \sfD\phi_g x_i$.
\end{proof}

\begin{prop}\label{prop:D_commutes}
	The diagram \eqref{eq:functordiagram} commutes.
\end{prop}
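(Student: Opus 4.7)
The plan is to verify commutativity of the square on objects first, then on morphisms. For objects, fix an acyclic oriented matroid $\cM$. The proof of \cref{prop:D_functor} already identifies $\sfD\sfS\cM$ with $\F_2[\bx]/I'$, where $I'$ is the image of $O(\cM)^\star$ under the depolarization isomorphism $\F_2[\bx,\by]/D \xrightarrow{\sim} \F_2[\bx]$ sending $y_e\mapsto 1-x_e$. Meanwhile $\sfR\sfW^+\cM = \F_2[\bx]/I_{\sfW^+\cM}$. Both rings carry the same distinguished coordinate functions $x_1,\dots,x_n$, so it suffices to prove $I' = I_{\sfW^+\cM}$.

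Both ideals contain $x_i(1-x_i)$ for every $i$: the improper circuits $\{i,\bar i\}$ contribute $x_iy_i$ to $O(\cM)^\star$, which depolarizes to $x_i(1-x_i)$. Any ideal of $\F_2[\bx]$ containing all the $x_i(1-x_i)$ is uniquely determined by its $\F_2^n$-variety, so it is enough to show $V(I') = \sfW^+\cM$. Using the primary decomposition \eqref{eq:OMstar} and the fact that the variety of an intersection is the union of varieties, I compute $V(I')$ prime by prime. The depolarization of $\mathfrak p(W)$ is $\langle x_e : W_e = +\rangle + \langle 1-x_e : W_e = -\rangle$, and since $\cM$ is acyclic, hence loopless, every tope $W$ has full support; hence its $\F_2^n$-variety is the single point $c^{(W)}$ with $c^{(W)}_e = 0$ iff $W_e = +$, i.e.\ the indicator of $W^- = (-W)^+$. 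Because $\cW(\cM)$ is closed under negation, the union over all topes recovers exactly $\sfW^+\cM$, giving $V(I') = \sfW^+\cM$ and therefore $I' = I_{\sfW^+\cM}$.

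For morphisms, let $\phi_f:\cM_1\to\cM_2$ be a strong map with underlying set map $\uf : E_1\cup\{\lop\}\to E_2\cup\{\lop\}$. By definition $\sfD\sfS\phi_f$ sends $x_i\mapsto x_{\uf(i)}$ when $\uf(i)\in E_2$ and $x_i\mapsto 0$ when $\uf(i)=\lop$. On the other side, the computation in the proof of \cref{P:strongmaptomorphism} gives $(\sfW^+\phi_f)^{-1}(\tk_{\sfW^+\cM_1}(i)) = \tk_{\sfW^+\cM_2}(\uf(i))$ when $\uf(i)\in E_2$, and the empty trunk when $\uf(i)=\lop$ (since $\lop\notin E_2$ cannot lie in any codeword of $\sfW^+\cM_2$). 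By the recipe for $\sfR$ recalled in \cref{S:neuralring}, $\sfR\sfW^+\phi_f$ then sends $x_i$ to the indicator function of that trunk, namely $x_{\uf(i)}$ or $0$, matching $\sfD\sfS\phi_f$ on generators and hence as a ring homomorphism.

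The main obstacle is the object-level calculation $V(I') = \sfW^+\cM$: one has to track the sign convention in \eqref{eq:OMstar} carefully, because the naive computation of the variety of the depolarization of $\mathfrak p(W)$ yields a single indicator of $W^-$ rather than $W^+$, and the desired equality is rescued only by invoking the symmetry $-\cW(\cM) = \cW(\cM)$. Everything else reduces to unpacking the definitions of $\sfR$ and $\sfD$ on morphisms.
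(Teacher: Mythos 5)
Your proof is correct and follows essentially the same route as the paper's: on objects you reduce to comparing ideals in $\F_2[\bx]$ obtained from the decomposition \eqref{eq:OMstar} under $y_i \mapsto 1-x_i$, and you resolve the same sign flip (each depolarized prime cuts out the indicator of $W^-$ rather than $W^+$) by invoking tope symmetry, exactly as the paper does via $\mathfrak m(W) = \bar{\mathfrak p}(-W)$ and axiom (V\ref{axiom:symmetryV}); on morphisms both arguments just unpack $\sfR$ and $\sfD$ on generators. The only cosmetic difference is that you pass through $\F_2^n$-varieties (using the Boolean relations $x_i(1-x_i)$ to identify an ideal with the vanishing ideal of its variety) and use the trunk description of $\sfR$, while the paper compares the intersections of ideals directly and uses the precomposition description of $\sfR$.
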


\begin{proof}
	We will show that: \begin{enumerate}
	\item for any acyclic oriented matroid $\cM$,
		\[(\sfD \circ \sfS) (\cM) = (\sfR \circ \sfW^+) (\cM), \text{ and}\]
	\item for a strong map of acyclic oriented matroids
		$f : \cM_1 \to \cM_2$, \[(\sfD \circ \sfS)( f) = (\sfR \circ \sfW^+)( f).\]
	\end{enumerate}

\noindent (1)
	We prove the first part by showing that the ring of functions on $\sfW^+\cM$
	is precisely the ring $(\sfD \circ \sfS)(\cM)$. We do this by showing that they are both
	quotients of $\F_2[\bx]$ by the same ideal.
	For a tope $W \subseteq \pm[n]$, denote $\bar{\mathfrak{p}}(W) = \langle x_i \mid W_i = +\rangle + \langle 1-x_i \mid W_i = -\rangle$, i.e.\ the image of $\mathfrak{p}(W)$ under the map $y_i \mapsto 1- x_i$ (recall \cref{eq:OMstar}).
	Then we have
		\[ (\sfD \circ \sfS) (\cM) \cong \F_2[\bx] / \left(\bigcap_{W\in\cW(\cM)} \bar{\mathfrak{p}}(W)\right). \]
	
	Now consider $(\sfR \circ \sfW^+)\cM = \F_2[\bx]/I_{\sfW^+\cM}$. For each tope $W$, let $\mathfrak{m}(W) = \langle x_i \mid W_i = -\rangle + \langle 1-x_i \mid W_i = +\rangle$, the maximal ideal of $\F_2[\bx]$ vanishing at codeword $W^+$.
	As the vanishing ideal of a finite variety, we have
		\[ I_{\sfW^+\cM} = \bigcap_{W \in \cW(\cM)} \mathfrak{m}(W). \]
	By construction, $\mathfrak{m}(W) = \bar{\mathfrak{p}}(-W)$. By symmetry (axiom (V\ref{axiom:symmetryV})), $W$ is a tope if and only if $-W$ is a tope. Therefore, the ideals are defined by the same intersection and therefore the corresponding quotients are identical.

%
%

	\vspace{2mm}
	
	\noindent (2)	Now we prove that strong maps point to the same monomial map via
	$\sfD \circ \sfS$ and $\sfR \circ \sfW^+$. It is sufficient to check the action of
	each monomial map on generators of $\F_2[\bx]$.

	A strong map $\phi_f$ is defined by a set map $f: E_1 \to E_2$ satisfying
	$S \subseteq E_2$ convex
	implies $f^{-1}(S) \subseteq E_1$ convex. The strong monomial map $\sfS \phi_f $
	sends $x_i$ to $0$ if $f(i) = \lop$ and $x_{f(i)}$ otherwise; it acts similarly on $y_i$.
	Applying $\sfD$, the monomial map $(\sfD \circ \sfS)( \phi_f)$ still sends $x_i$ to $0$
	if $f(i) = \lop$ and $x_{f(i)}$ otherwise.

	Going around the diagram the other way, $\sfW^+ \phi_f$ sends a
	codeword $\sigma \in \sfW^+ \cM_2 $ to $f^{-1}(\sigma) \in \sfW^+ \cM_1$. The
	functor $\sfR$ sends a morphism of codes $g:\scC_1 \to \scC_2$ to the ring homomorphism
	given by sending $\nu \in \sfR \scC_2$ to its precomposition with $g$, i.e. $\nu \circ g \in
	\sfR \scC_1$. Starting with a strong map $\phi_f$, let us consider the action of
	$\sfR \sfW^+ \phi_f$ on generators of $\sfR \sfW^+ \cM_1$:
	\[(\sfR \sfW^+\phi_f)(x_i) = x_i \circ [(\sfW^+\phi_f)^{-1}] =
	x_i \circ [\sigma \mapsto f^{-1}(\sigma)]
	\]
	This function takes as input a codeword $\sigma \in \sfW^+ \cM_2$. If $i \in f^{-1}(\sigma)$,
	then it takes the value $1$, and if $i \notin f^{-1}(\sigma)$ then it takes the value $0$.
	If $f(i) = \lop$, then the function is identically zero. If $f(i) \neq \lop$, then
	$x_i \circ [\sigma \mapsto f^{-1}(\sigma)]$ is equal to $x_{f(i)}$, proving that the monomial
	maps $(\sfD \circ \sfS)(\phi_f)$ and $(\sfR \circ \sfW^+)(\phi_f)$ are the same.
\end{proof}

 \Cref{thm:foc} (proven by \cref{P:faithfulnotfull,P:Ifunctor,prop:D_functor,prop:D_commutes}) gives us a new lens to see the neural ideal.
 In essence, neural codes can be seen as a relaxation of oriented matroids.
 The neural ideal is a generalization of the oriented matroid ideal to
 the less constrained category of neural codes.
 Further, \cref{prop:D_functor,prop:D_commutes} demonstrates that the duality between a neural code and its combinatorial relations is analogous to the duality between topes and circuits. In particular, in the special case when a neural code arises from an oriented matroid, the codewords correspond to topes and the elements of the canonical form correspond to circuits.  \cref{L:CFcircuitaxioms}, which states that the elements of the canonical form partially follow the circuit axioms, strengthens this analogy. 

\section{Open questions}\label{S:questions}

The preceding sections have presented our case for employing
oriented matroid theory in the study of neural codes.
However, we stand at the very beginning of exploring this connection. In this
section, we outline some directions for future work. 

\subsection{Is the missing axiom of convex neural codes also lost forever? }

Oriented matroids capture much of combinatorial structure of hyperplane arrangements in a few  axioms. 
Is it possible to give a similar characterization for convex neural codes, or at least for codes which lie below oriented matroids in $\pcode$?
 While general neural codes are not required to satisfy any axioms,
the codes below oriented matroids may be more tractable to combinatorial description.
\begin{question}
  Can the class of neural codes below oriented matroids be characterized
  by a set of combinatorial axioms?
\end{question}

The functorial view we introduced in Section \ref{sec:algebra} may be helpful in finding this characterization. 
If this question is answered in the affirmative, then these codes can be thought of as
``partial oriented matroids.''  
Suppose that $\scC\subseteq 2^{[n]}$ is a code and
$\cM$ is an oriented matroid on ground set $[N]$ such that $\scC = f(\sfL^+\cM)$; then,
we obtain constraints on the set of covectors of $\cM$.  
Each included codeword $\sigma \in\scC$ implies existence of a preimage covector in $\cM$, and
each excluded codeword $\tau\notin\scC$ implies a set of forbidden covectors which may not be in $\cM$.
The oriented matroids satisfying these constraints can then be said to
be ``completions'' of the partial oriented matroid.

Just as we wish to characterize codes lying below oriented matroids with a set of combinatorial axioms, we might also wish to characterize convex codes using a set of combinatorial axioms. However, this is likely not possible. 
In \cite{mayhew2018yes}, Mayhew, Newman, and Whittle show that ``the missing axiom of matroid theory is lost forever." Slightly more formally, they show that there is no sentence characterizing representability in the monadic second order language $MS_0$, which is strong enough to state the standard matroid axioms. Roughly, this means that there is no ``combinatorial" characterization of representability, or no characterization of representability in the language of the other matroid axioms. 

Because we have found strong connections between representability and convexity, it is natural to ask whether a similar statement can be proven for convex codes. 

\begin{question}
Is there a natural language in which we can state ``combinatorial" properties of neural codes, in analogy with the $MS_0$ for matroids? If so, is it possible to characterize convexity in this language?
\end{question}

\subsection{Computational questions}
While we have shown that the convex code decision problem is $\exists \R$-hard,  we have not actually shown that the convex code decision problem lies in $\exists \R$, or is even algorithmically decidable. 
A similar problem, that of determining whether a code has a good cover realization, is undecidable by \cite[Theorem 4.5]{chen2019neural}. 
Here, the distinction between codes with good cover realizations and convex realizations may be significant. 
For instance, while there is an algorithm to decide whether, for any given $d$, a simplicial complex is the nerve of convex open subsets of $\R^d$, for each $d \geq 5$, it is algorithmically undecidable whether a simplicial complex is the nerve of a good cover in $\R^d$ \cite{tancer2013nerves}. 

We outline a possible path towards resolving \cite[Question 4.5]{chen2019neural}, which asks whether there is an algorithm which decides whether a code is convex. 
Our approach hinges on \cref{thm:polytope_matroid}: a code is polytope convex if and only if it lies below a representable oriented matroid.
A first step towards solving the convex code decision problem is answering the following open question:

\begin{question}\label{Q:convexiffpolytope}
Can every convex code be realized with convex polytopes?
\end{question}

In dimension two, this question has an affirmative answer, as a result, the class of planar convex codes is decidable \cite{bukh2022planar}. 
If this holds in all dimensions,
then our Theorem \ref{thm:polytope_matroid} becomes strengthened to the following: 

\begin{conj*}
  A code $\scC$ is convex if and only if $\scC \leq \sf{L}^+\cM$ for $\cM$
  a representable oriented matroid. 
\end{conj*}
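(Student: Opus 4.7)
The plan is to split the biconditional and reduce it to \cref{thm:polytope_matroid} and \cref{Q:convexiffpolytope}, so that the entire content of the conjecture is concentrated in one place.

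For the direction $\scC \le \sfL^+\cM \implies \scC$ convex, the argument is essentially immediate. Suppose $\cM$ is representable and $\scC \le \sfL^+\cM$ in $\pcode$. By \cref{thm:polytope_matroid}, $\scC$ has a realization by (interiors of) convex polytopes in some $\R^d$. Since open convex polytopes are in particular open convex sets, this already exhibits $\scC$ as a convex code, with no further work.

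For the converse, suppose $\scC$ is convex, so $\scC = \code(\cU,X)$ for some family $\cU = \{U_1,\dots,U_n\}$ of open convex subsets of $\R^d$ and open convex bounding set $X$. Under the hypothesis that \cref{Q:convexiffpolytope} has an affirmative answer, $\scC$ also admits a realization by open convex polytopes, and then \cref{thm:polytope_matroid} supplies a representable oriented matroid $\cM$ with $\scC \le \sfL^+\cM$. Thus the conjecture is logically equivalent to \cref{Q:convexiffpolytope}, and attacking the conjecture means attacking that question.

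This is where the main obstacle lies. A natural strategy to resolve \cref{Q:convexiffpolytope} is polytope approximation of the given realization. Pick a finite witness set $\{p_\sigma\}_{\sigma\in\scC}$ with $p_\sigma$ in the atom $A_\sigma = \bigcap_{i\in\sigma}U_i \setminus \bigcup_{j\notin\sigma}U_j$. For each $i$, one would like to build an open convex polytope $P_i \subseteq U_i$ containing precisely $\{p_\sigma : i\in\sigma\}$, together with a polytope bounding set $P_X$, so that the resulting polytope code equals $\scC$. Existence of the required separating hyperplanes between each $p_\sigma$ (with $i\notin\sigma$) and the set $\overline{\{p_\tau : i\in\tau\}}$ is given by standard convex separation; the difficulty is producing these hyperplanes simultaneously, so that each $P_i$ still lies in $U_i$ and no spurious codewords are introduced. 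A cleaner formulation is to approximate each $U_i$ in Hausdorff distance by inscribed polytopes $P_i^\varepsilon \subseteq U_i$ and argue that for $\varepsilon$ small enough no atom is destroyed.

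The chief difficulty is that an atom $A_\sigma$ may have empty interior—e.g.\ when several of the $U_i$ meet tangentially—so the witness $p_\sigma$ lies on the boundary of some $U_j$ and is immediately lost under any strict inscription $P_j \subsetneq U_j$. Resolving this probably requires first perturbing $(\cU,X)$ into a realization of $\scC$ in which every nonempty atom has nonempty interior (a ``generic position'' lemma for convex codes), and only then carrying out the polytope approximation. An alternative would be to bypass \cref{Q:convexiffpolytope} entirely by constructing a representable oriented matroid $\cM$ directly from the convex realization—fitting separating hyperplanes at the witness points to obtain a hyperplane arrangement whose covector code surjects onto $\scC$ via trunks and morphisms, exactly as in the proof of \cref{thm:polytope_matroid}. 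Either route reduces to the same geometric heart: showing that convex realizability is robust enough to be encoded by finitely many half-spaces.
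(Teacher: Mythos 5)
This statement is an open conjecture in the paper, not a proved theorem: the authors only observe that it would follow from Theorem~\ref{thm:polytope_matroid} together with an affirmative answer to Question~\ref{Q:convexiffpolytope}, and your reduction reproduces exactly that reasoning — the direction $\scC \leq \sfL^+\cM \Rightarrow \scC$ convex is unconditional via Theorem~\ref{thm:polytope_matroid}, while the converse is precisely the open question of whether every convex code admits a realization by convex polytopes. Your closing approximation sketch does not (and does not claim to) close that gap — the empty-interior atoms and the need for a ``generic position'' perturbation that you identify are exactly the obstructions that keep the question open — so your proposal establishes the same conditional statement the paper does, by essentially the same route, and nothing more.
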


If this conjecture holds, then we can replace the problem of
determining whether a code is convex with the problem of determining
whether a code lies below a representable matroid. 
We only need to consider the set of minor-minimal matroids lying above the code, and
check these matroids for representability.

\begin{question}\label{Q:OMsabovecode}
  Given a code $\scC$, is the set of minor-minimal matroids above $\scC$ finite?
  If so, is there an efficient algorithm to enumerate them?
\end{question}

One way to find oriented matroids above a code $\scC$ is to travel step-by-step up the poset $\pcode$. 
While there is a straightforward algorithm to enumerate the $O(n)$
codes which are covered by a code $\scC\subseteq 2^{[n]}$ in $\mathbf{P}_{\mathbf{Code}}$
\cite{jeffs2019sunflowers}, we do not know of a straightforward way to characterize the codes which cover $\scC$. 
If we can characterize these codes as well, we may be able to find a way to ``climb up''
towards an oriented matroid. Alternatively, we can use the ``partial oriented matroid''
perspective described above to obtain a set of constraints that must be obeyed by any oriented
matroid above this code. Then we can look for a matroid satisfying these constraints.

Both of these approaches depend on the minimal size of the
ground set of oriented matroids that lie above $\scC$ in $\pcode$. 
Let 
\[ M(n) = \max_{\substack{\scC \subseteq 2^{[n]}
 \\ \scC \mbox{   \scriptsize  below an}\\\mbox{  \scriptsize oriented matroid}}} \left[ \min_{\scC\leq \sf{L}^+(\cM)} |E(\cM)|\right] \]
 be the smallest $N$ such that any code $\cC$ on $n$ neurons which lies below an oriented matroid lies below an oriented matroid with ground set of size at most $N$. 
Similarly, let
\[ H(n) = \max_{\substack{\scC \subseteq 2^{[n]}
 \\ \scC \mbox{   \scriptsize  below a representable}\\\mbox{  \scriptsize oriented matroid}}} \left[ \min_{\substack{\scC\leq \sf{L}^+\cM \\ \cM \mbox{   \scriptsize  representable}}} |E(\cM)| \right] \] be the smallest $N$ such that any code $\scC$ on $n$ neurons below a representable oriented matroid lies below a representable oriented matroid with ground set of size at most $N$. 
Clearly, $M(n) \leq H(n)$, since any representable matroid is a matroid. 

\begin{question}
Describe the growth of $M(n)$ and $H(n)$ as functions of $n$. Are they equal?
\end{question}

Note that if $H(n)$ is a computable function of $n$, and Question ~\ref{Q:convexiffpolytope}
is answered in the affirmative, then the convex code decision problem is decidable. 

\subsection{Other questions in geometric combinatorics}

Many classic theorems about convex sets, such as Helly's theorem, Radon's theorem, and Caratheodory's theorem, have oriented matroid analogues. In some way, we can view our Theorem \ref{thm:bad_sunflower} as an oriented matroid version of Jeffs' sunflower theorem \cite[Theorem 1.1]{jeffs2019sunflowers}. The fact that the non-convex codes constructed from the sunflower theorem do not lie below oriented matroids shows us that there is some fact about oriented matroids underlying the sunflower theorem.

\begin{question}
Is there a natural oriented matroid version of Jeffs' sunflower theorem? 
\end{question}

Proposition~\ref{prop:no_local} stated that if $\cM$ is an oriented matroid,
the code $\sf{L}^+\cM$ has no local obstructions. That is, for any
$\sigma \in \Delta(\sf{L}^+\cM)\setminus \sf{L}^+\cM$,
$\link_\sigma( \Delta(\sf{L}^+\cM))$ is contractible.
This result can also be found in \cite{edelman2002convex},
where is is phrased as a result about the simplicial complex $\Delta_{\mathrm{acyclic}}(\cM)$.
Something stronger holds for representable oriented matroids: by \cite[Theorem 5.10]{chen2019neural},
if $\cM$ is a {\bf representable} oriented matroid, and
$\sigma \in \Delta(\sf{L}^+\cM)\setminus \sf{L}^+\cM$, then
$\link_\sigma(\sf{L}^+\cM)$ must be collapsible.
Expanding upon this work, \cite{jeffs2019convex} gives stronger conditions
that the link of a missing codeword in a convex code must satisfy. 

We ask whether this holds for all oriented matroids:

\begin{question}
  If $\cM$ is an oriented matroid, and $\sigma \in \Delta(\sf{L}^+\cM)\setminus \sf{L}^+\cM$,
  is $\link_\sigma( \Delta(\sf{L}^+\cM))$ collapsible? More generally, which simplicial complexes can arise as $\link_\sigma( \Delta(\sf{L}^+\cM))$ for $\sigma \in \Delta(\sf{L}^+\cM)\setminus \sf{L}^+\cM$? 
\end{question} 

If not, then the non-collapsibility of $\link_\sigma( \Delta(\sfL^+\cM))$ gives a new ``signature" of non-representability. 

\subsection{Functorial questions}

The maps $\sfW^+$ and $\sfL^+$ established analogies
between structures of oriented matroids and neural codes.
Topes and covectors are translated into the codewords,
and signed circuits are mapped to the combinatorial relations. This leads us to the following
natural question:

\begin{question}
  Do $\sfW^+$ and $\sfL^+$ map other matroid features to meaningful structures
  associated to neural codes? In particular, do the chirotope, rank function, and
  convex closure function have a natural interpretation when mapped to general neural codes?
  
\end{question}

This paper focused on the category of oriented matroids, since
they have a well-established notion of morphisms (strong maps) and since they
are extensively studied. However, there is also a notion of ``affine strong maps''
defined in \cite{hochstattler1999linear} that may serve to turn affine oriented matroids
into a category. This might also admit a natural functor to neural codes. Additionally,
the recently defined objects COM's (which stands for both {\em conditional oriented matroids}
and {\em complexes of oriented matroids}) \cite{bandelt2018coms} are a natural place to try to
extend strong maps next.

\begin{question}
  Can affine oriented matroids with affine strong maps be embedded in $\Code$?
  Can strong maps be defined for COM's
  in such a way that the resulting category can be embedded in $\Code$?
  \end{question}

While strong maps are more frequently used as morphisms of oriented matroids,
weak maps are the next best option.

\begin{question}
  
  Can the category of oriented matroids with morphisms given by weak maps be embedded in $\Code$?
  
  \end{question}


\section*{Acknowledgements}

The authors thank Carina Curto, Emanuele Delucchi, Boaz Haberman,
Gregory Henselman-Petrusek,
Vladimir Itskov, R. Amzi Jeffs, Anne Shiu, Bernd Sturmfels,
and Nora Youngs for helpful conversations. We remark that Henselman-Petrusek
developed connections between neural codes and oriented matroids independently, and we look forward to his upcoming article on the topic.
They also extend deep gratitude to Anne Shiu, R. Amzi Jeffs, and Carina Curto for their comments on an early draft.
Further, they also thank Laura Anderson for heroic efforts to fix the proof of an earlier version of Proposition 4.
They also appreciate the thorough reviews by anonymous referees which contributed
significantly to readability and accuracy of the results.
CL was supported by NSF fellowship grant DGE1255832.
ABK was supported by an NLM Training Program fellowship T15LM007093

	\bibliographystyle{GLG-bibstyle}
	\bibliography{CT-submission}
\end{document}